\newcommand{\ii}{{\rm{i}}}
\newcommand{\bA}{\mathbb{A}}
\newcommand{\bB}{\mathbb{B}}
\newcommand{\bQ}{\mathbb{Q}}
\newcommand{\bS}{\mathbb{S}}
\newcommand{\bI}{\mathbb{I}}
\newcommand{\bJ}{\mathbb{J}}
\newcommand{\cF}{\mathcal{F}}
\def\dint{\textup{d}}
\newcommand{\Mod}[1]{\ (\mathrm{mod}\ #1)}
\newcommand{\E}{\mathbb E}
\newcommand{\R}{\mathbb{R}}
\newcommand{\N}{\mathbb{N}}
\renewcommand{\P}{\mathbb{P}}
\newcommand{\Vol}{\mathop{\mathrm{Vol}}\nolimits}
\newcommand{\eps}{\varepsilon}
\newcommand{\ind}{\mathbbm{1}}
\newcommand{\dd}{{\rm d}}
\newcommand{\eee}{{\rm e}}
\theoremstyle{plain}
\newtheorem{theorem}{Theorem}[section]
\newtheorem{lemma}[theorem]{Lemma}
\newtheorem{proposition}[theorem]{Proposition}
\newtheorem{conjecture}[theorem]{Conjecture}
\theoremstyle{definition}
\theoremstyle{remark}
\newtheorem{remark}[theorem]{Remark}
\newcommand{\stirling}[2]{\genfrac{[}{]}{0pt}{}{#1}{#2}}
\newcommand{\stirlingsec}[2]{\genfrac{\{}{\}}{0pt}{}{#1}{#2}}
\begin{document}

\author{Zakhar Kabluchko}
\address{Zakhar Kabluchko: Institut f\"ur Mathematische Stochastik,
Westf\"alische Wilhelms-Universit\"at M\"unster,
Orl\'eans-Ring 10,
48149 M\"unster, Germany}
\email{zakhar.kabluchko@uni-muenster.de}

\title[Angles of random simplices]{Recursive scheme for angles of random simplices, \\and applications to random polytopes}

\keywords{Random polytope, random simplex, solid angle, sum of angles, beta distribution, typical Poisson-Voronoi cell}

\subjclass[2010]{Primary: 52A22, 60D05; Secondary: 52A55, 52B11, 60G55, 52A27.}

\begin{abstract}
Consider a random simplex $[X_1,\ldots,X_n]$ defined as the convex hull of independent identically distributed (i.i.d.)\ random points $X_1,\ldots,X_n$ in $\mathbb{R}^{n-1}$ with the following beta density:
$$
f_{n-1,\beta} (x) \propto (1-\|x\|^2)^{\beta} \ind_{\{\|x\| < 1\}},
\qquad
x\in\mathbb{R}^{n-1},
\quad
\beta>-1.
$$
Let $J_{n,k}(\beta)$ be the expected internal angle of the simplex $[X_1,\ldots,X_n]$ at its face $[X_1,\ldots,X_k]$. Define $\tilde J_{n,k}(\beta)$ analogously for i.i.d.\ random points distributed according to the beta' density
$$
\tilde f_{n-1,\beta} (x) \propto (1+\|x\|^2)^{-\beta},
\qquad
x\in\mathbb{R}^{n-1},
\quad
\beta > \frac{n-1}{2}.
$$
We derive formulae for $J_{n,k}(\beta)$ and $\tilde J_{n,k}(\beta)$ which make it possible to compute these quantities symbolically, in finitely many steps,  for any integer or half-integer value of $\beta$. For $J_{n,1}(\pm 1/2)$ we even provide explicit formulae in terms of products of Gamma functions.
We give applications of these results to two seemingly unrelated problems of stochastic geometry.

\vspace*{1mm}
\noindent
(i) We compute explicitly the expected $f$-vectors of the typical Poisson-Voronoi cells in dimensions up to $10$. 

\vspace*{1mm}
\noindent
(ii) Consider the random polytope $K_{n,d} := [U_1,\ldots,U_n]$ where $U_1,\ldots,U_n$ are i.i.d.\ random points sampled uniformly inside some $d$-dimensional convex body $K$ with smooth boundary and unit volume. M.~Reitzner [\textit{Adv.\ Math.}, 2005] proved the existence of the limit of the normalized expected $f$-vector of $K_{n,d}$:
$$
\lim_{n\to\infty}
n^{-{\frac{d-1}{d+1}}}\E \mathbf f(K_{n,d}) = \mathbf c_d \cdot \Omega(K),
$$
where $\Omega(K)$ is the affine surface area of $K$, and $\mathbf c_d$ is an unknown vector not depending on $K$.  We compute $\mathbf c_d$
explicitly in dimensions up to $d=10$ and also solve the analogous problem for random polytopes with vertices distributed uniformly on the sphere.

\end{abstract}

\maketitle
\section{Statement of the problem}
\subsection{Introduction}
It is well known that the sum of angles in any plane triangle is constant, whereas the sum of solid $d$-dimensional angles at the vertices of a $d$-dimensional simplex is not, starting with dimension $d=3$. It is therefore natural to ask what is the ``average'' angle-sum of a $d$-dimensional simplex. To define the notion of average, we put a probability measure on the set simplices as follows.
Let $X_1,\ldots,X_n$ be independent, identically distributed (i.i.d.)\ random points in $\R^{n-1}$ with probability distribution $\mu$. Consider a random simplex defined as their convex hull:
$$
[X_1,\ldots,X_n] := \{\lambda_1X_1+\ldots+\lambda_n X_n\colon \lambda_1+\ldots+\lambda_{n}=1, \lambda_1\geq 0,\ldots, \lambda_{n}\geq 0\}.
$$
For the class of distributions studied here, this simplex is non-degenerate (i.e., has a non-empty interior) a.s. Let $\beta ([X_1,\ldots,X_{k}], [X_1,\ldots,X_{n}])$ denote the internal angle of the simplex $[X_1,\ldots,X_n]$ at its $(k-1)$-dimensional face $[X_1,\ldots,X_k]$. Similarly, we denote by $\gamma([X_1,\ldots,X_{k}], [X_1,\ldots,X_{n}])$ the external (or normal) angle of  $[X_1,\ldots,X_n]$ at $[X_1,\ldots,X_k]$. The exact definitions of internal and external angles will be recalled in Section~\ref{sec:notation}; see also the book~\cite{SW08} for an extensive account of stochastic geometry.  We agree to choose the units of measurement for angles in such a way that the full-space angle equals $1$.
We shall be interested in the expected values of the above-defined angles. The special case when $\mu$ is a multivariate normal distribution has been studied in~\cite{kabluchko_zaporozhets_gauss_simplex}, \cite{goetze_kabluchko_zaporozhets} and~\cite{godland_kabluchko_zaporozhets}, where the following theorem has been demonstrated.

\begin{theorem}
\label{theo:gauss_simplex}
If $X_1,\ldots,X_n$ are i.i.d.\ random points in $\R^{n-1}$ having a non-degenerate multivariate Gaussian distribution, then the expected internal angle of $[X_1,\ldots,X_n]$ at the $k$-vertex face $[X_1,\ldots,X_k]$ coincides with the internal angle of  the regular $(n-1)$-dimensional simplex $[e_1,\ldots,e_n]$ at its face $[e_1,\ldots,e_k]$, for all $k\in \{1,\ldots,n\}$. Here, $e_1,\ldots,e_n$ denotes the standard orthonormal basis of $\R^n$.  The statement remains true if internal angles are replaced by the external ones.\footnote{In~\cite{kabluchko_zaporozhets_gauss_simplex}, the theorem has been established by two different methods for internal angles and only in the special case when $k=1$ and the Gaussian distribution is isotropic. The same proofs apply to arbitrary $k$'s. The full proof of Theorem~\ref{theo:gauss_simplex} in the isotropic case can be found in~\cite[Theorem~4.1]{goetze_kabluchko_zaporozhets}. The case of the non-isotropic Gaussian distribution has been settled in~\cite[Theorem 4.17]{godland_kabluchko_zaporozhets}. The Gaussian  simplex can be viewed as the limiting case of the so-called beta simplex as $\beta\to +  \infty$. For beta simplices and polytopes, results closely related to Theorem~\ref{theo:gauss_simplex} can be found in~\cite[Theorems~1.6, 1.12, 1.13]{beta_polytopes}.
}
\end{theorem}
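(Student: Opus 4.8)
The plan is to express every angle as the Gaussian measure of an explicit cone and then exploit rotational invariance. Recall that for a full-dimensional cone $C$ in a Euclidean space $V$ the normalized solid angle of $C$ equals $\P(Z\in C)$ with $Z$ a standard Gaussian on $V$, since the radial part integrates out. I would first record the model cone: realizing the regular simplex as $[e_1,\dots,e_n]$ inside $L_0:=\mathbf{1}^\perp\subset\R^n$ (its vertices project to $\bar e_i:=e_i-\tfrac1n\mathbf{1}\in L_0$), a direct computation shows that its internal angle at $[e_1,\dots,e_k]$ equals $\P(Z_{k+1}\ge0,\dots,Z_n\ge0)$, where $Z=(Z_1,\dots,Z_n)$ is a standard Gaussian on $L_0$. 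Equivalently, it is the solid angle of the fixed cone $C:=\{w\in L_0:\langle w,\bar e_i\rangle\ge0,\ i>k\}$.

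For the internal angle of the random simplex I would fuse the two sources of randomness, the vertices and the Gaussian measuring the angle, into a single invariance statement. With $Z$ an independent standard Gaussian, the expected internal angle is $\P(Z\in T)$, where $T=\lin(X_2-X_1,\dots,X_k-X_1)+\pos(X_{k+1}-X_1,\dots,X_n-X_1)$ is the tangent cone with apex at the origin. Membership $Z\in T$ is equivalent to solvability of $\sum_i c_iX_i=Z$ with $\sum_i c_i=0$ and $c_{k+1},\dots,c_n\ge0$; since the vertices are a.s.\ affinely independent the solution $c^{*}\in L_0$ is unique, so the expected internal angle equals $\P(c^{*}\in C)$ for the same cone $C$. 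The crux is the law of $c^{*}$. Lifting to $\R^n$ so that each $X_i$ is the orthogonal projection onto $L_0$ of an i.i.d.\ standard Gaussian, the map $L_0\to\R^{n-1}$, $c\mapsto\sum_i c_iX_i$, is represented in an orthonormal basis of $L_0$ by a matrix $B$ with i.i.d.\ standard Gaussian entries, whence $c^{*}=B^{-1}Z$. For $O\in\OGroup(L_0)$ one has $Oc^{*}=(BO^{-1})^{-1}Z$, and since right multiplication by $O^{-1}$ preserves the law of $B$ while $Z$ is independent of $B$, we get $Oc^{*}\eqdistr c^{*}$. Thus $c^{*}$ is rotationally invariant on $L_0$, so $\P(c^{*}\in C)$ is the solid angle of $C$, i.e.\ the regular-simplex value. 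I would stress that this uses only the independence of the measuring Gaussian from $B$, never isotropy: writing $X_i=\Sigma^{1/2}Y_i$ with $Y_i$ isotropic reduces the non-isotropic angle to $\P(\Sigma^{-1/2}Z\in T_Y)$, i.e.\ the same representation with $B$ built from the $Y_i$ and the independent measuring vector $\Sigma^{-1/2}Z$, so the non-isotropic case follows verbatim.

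For the external angle I would use the dual description: $W$ lies in the normal cone $N(F,P)$ iff it is orthogonal to the face directions and $\langle W,X_j-X_1\rangle\le0$ for $j>k$, equivalently $N(F,P)=\pos(-v^{k+1},\dots,-v^n)$ with $v^{k+1},\dots,v^n$ the dual-basis vectors of the edge vectors. The external angle is then $\P(\langle W,X_j-X_1\rangle\le0,\ j>k)$ with $W$ a standard Gaussian on the face-orthogonal subspace $M:=\lin(X_2-X_1,\dots,X_k-X_1)^\perp$. This is the step I expect to be the main obstacle: unlike the internal case, the measuring subspace $M$ and the relevant edge vectors are built from the \emph{same} points, so the clean inverse-Gaussian representation is no longer available. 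The route I would take is to condition on the affine hull of $[X_1,\dots,X_k]$; given this, the remaining vertices project into $M$ as i.i.d.\ isotropic Gaussians and $W$ is a uniform direction in $M$, after which exchangeability of the non-face vertices together with rotational invariance inside $M$ should reduce the solid angle to the regular-simplex cone. The delicate point is disentangling the (Gaussian) position of the face inside $M$, i.e.\ the common shift by the projection of a face vertex; handling this random-subspace coupling is the heart of the external-angle argument, and as a fallback I would attempt to transfer the internal-angle computation through the polar (facet-normal) simplex.
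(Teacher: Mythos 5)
The paper itself does not prove Theorem~\ref{theo:gauss_simplex}; it quotes it from \cite{kabluchko_zaporozhets_gauss_simplex}, \cite{goetze_kabluchko_zaporozhets} and \cite{godland_kabluchko_zaporozhets}, so your attempt can only be measured against those arguments. Your \emph{internal-angle} argument is correct and complete, and it is essentially the mechanism used in those references: writing $Z\in T$ as $\sum_i c_iX_i=Z$ with $c\in L_0$, noting that in an orthonormal basis $(u_j)$ of $L_0$ the coefficient vector is $c^{*}=B^{-1}Z$ with $B_{\ell j}=\sum_i (u_j)_i(X_i)_\ell$ an i.i.d.\ Gaussian matrix independent of $Z$, and deducing rotational invariance of $c^{*}$ from the right-orthogonal invariance of the law of $B$. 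The reduction of the non-isotropic internal angle to the same representation with measuring vector $\Sigma^{-1/2}Z$ is also sound, since the argument uses nothing about the measuring vector beyond its independence from $B$ and a.s.\ non-vanishing. (The phrase ``each $X_i$ is the orthogonal projection onto $L_0$ of an i.i.d.\ standard Gaussian'' is dimensionally garbled, but the matrix computation it is meant to justify is correct.)

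The \emph{external-angle} half, however, is not a proof: you set up the right quantity and then explicitly leave the key step open, offering only a vague fallback via polarity. The gap is real but closable, and the missing ingredient is the independence of the sample mean from the centered sample. Concretely, all face vertices project to the same point $p=\Pi_M X_1=\Pi_M\bar X\in M$, where $\bar X=\tfrac1k(X_1+\dots+X_k)$, and conditioning on the direction $\theta$ of the measuring Gaussian gives for the expected external angle $\E\,\Phi(\langle\theta,p\rangle)^{n-k}$. Since $\bar X$ is independent of $V=\lin(X_2-X_1,\dots,X_k-X_1)$, hence of $M=V^\perp$, one gets $\Pi_M\bar X\sim N(0,\tfrac1k I_M)$, so $\|p\|^2\eqdistr\tfrac1k\chi^2_{n-k}$ and $\langle\theta,p\rangle\eqdistr\xi/\sqrt{k}$ with $\xi\sim N(0,1)$; the expected external angle is therefore $\tfrac{1}{\sqrt{2\pi}}\int_{\R}\Phi^{n-k}(x/\sqrt{k})\,\eee^{-x^2/2}\,\dd x=g_{n-k}(1/k)$, which is exactly the external angle of the regular simplex at a $k$-vertex face (cf.~\eqref{eq:I_J_infty}). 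Without this (or an equivalent) computation the external-angle claim is unsupported; note also that the non-isotropic external case does not ``follow verbatim'' from the isotropic one, since normal cones transform under $A$ by $A^{-T}$ rather than $A$, so it needs a separate reduction.
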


\subsection{Beta and beta' distributions}
In the present paper we shall be interested in the case when $\mu$ belongs to one of the following two  remarkable families of probability distributions introduced by Miles~\cite{miles} and studied by Ruben and Miles~\cite{ruben_miles}.
A random vector in $\R^d$ has a $d$-dimensional \textit{beta distribution} with parameter $\beta>-1$ if its Lebesgue density is
\begin{equation}\label{eq:def_f_beta}
f_{d,\beta}(x)=c_{d,\beta} \left( 1-\left\| x \right\|^2 \right)^\beta\ind_{\{\|x\| <  1\}},\qquad x\in\R^d,\qquad
c_{d,\beta}= \frac{ \Gamma\left( \frac{d}{2} + \beta + 1 \right) }{ \pi^{ \frac{d}{2} } \Gamma\left( \beta+1 \right) }.
\end{equation}
Here, $\|x\| = (x_1^2+\ldots+x_d^2)^{1/2}$ denotes the Euclidean norm of the vector $x= (x_1,\ldots,x_d)\in\R^d$. Similarly, a random vector in $\R^d$ has \textit{beta' distribution} with parameter $\beta>d/2$ if its Lebesgue density is given by
\begin{equation}\label{eq:def_f_beta_prime}
\tilde{f}_{d,\beta}(x)=\tilde{c}_{d,\beta} \left( 1+\left\| x \right\|^2 \right)^{-\beta},\qquad
x\in\R^d,\qquad
\tilde{c}_{d,\beta}= \frac{ \Gamma\left( \beta \right) }{\pi^{ \frac{d}{2} } \Gamma\left( \beta - \frac{d}{2} \right)}.
\end{equation}
The following particular cases are of special interest:
\begin{itemize}
\item[(a)] The beta distribution with $\beta=0$ is the uniform distribution in the unit ball $\bB^{d}:=\{x\in\R^d\colon \|x\| \leq 1\}$.
\item[(b)] The weak limit of the beta distribution as $\beta \downarrow -1$ is the uniform distribution on the unit sphere $\bS^{d-1} := \{x\in \R^d\colon \|x\| = 1\}$; see~\cite{beta_polytopes_temesvari}. In the following, we write $f_{d,-1}$ for the uniform distribution on $\bS^{d-1}$, and the results of the present paper apply to the case $\beta=-1$.
\item[(c)]  The standard normal distribution on $\R^d$ is the weak limit of both  beta and beta' distributions (after suitable rescaling) as $\beta\to +\infty$; see~\cite[Lemma~1.1]{beta_polytopes}.
\item[(d)] The beta' distribution $\tilde f_{n-1,n/2}$ on $\R^{n-1}$ with $\beta = n/2$ is the image of the uniform distribution on the upper half-sphere $\bS^{n-1}_+$ under the so-called gnomonic projection~\cite[Proposition~2.2]{convex_hull_sphere}; see also~\cite{kabluchko_poisson_zero} for further applications of this observation.
\end{itemize}

\subsection{Expected internal angles}
Let $X_1,\ldots,X_{n}$ be independent random points in $\R^{n-1}$ distributed according to the beta distribution $f_{n-1,\beta}$, where $\beta\geq -1$. Their convex hull $[X_1,\ldots,X_n]$ is called the $(n-1)$-dimensional \textit{beta simplex}.
We shall be interested in the expected internal angles of these random simplices, denoted by
$$
J_{n,k}(\beta) := \E \beta ([X_1,\ldots,X_{k}], [X_1,\ldots,X_{n}]),
$$
for all $n\in\N$ and $k\in \{1,\ldots,n\}$. By definition, $J_{n,n}(\beta) = 1$ for all $n\in\N$.
Similarly, let $\tilde X_1,\ldots,\tilde X_{n}$ be independent random points in $\R^{n-1}$ distributed according to the beta' distribution $\tilde f_{n-1,\beta}$, where $\beta> (n-1)/2$. Their convex hull $[\tilde X_1,\ldots,\tilde X_n]$ is called the $(n-1)$-dimensional \textit{beta' simplex} and its expected internal angles are denoted by
$$
\tilde J_{n,k}(\beta) := \E \beta ([\tilde X_1,\ldots,\tilde X_{k}], [\tilde X_1,\ldots,\tilde X_{n}],
$$
for all $n\in\N$ and $k\in \{1,\ldots,n\}$. Again, we define $\tilde J_{n,n}(\beta) = 1$ for all $n\in\N$.
Note that the subscripts $n$, respectively $k$,  refer to the number of vertices of the simplex, respectively, of the face of interest, rather than to the corresponding dimensions.  By exchangeability, for both beta and beta' simplices, it does not matter which face with $k$ vertices to take. Hence, the expected sum of internal angles at all $k$-vertex faces of the corresponding simplex is
$$
\bJ_{n,k}(\beta) :=  \binom {n}{k}  J_{n,k}(\beta),
\qquad
\tilde \bJ_{n,k}(\beta) :=  \binom {n}{k}  \tilde J_{n,k}(\beta).
$$
The triangular arrays $J_{n,k}(\beta)$ and $\tilde J_{n,k}(\beta)$ appeared in~\cite{beta_polytopes} together with the closely related arrays $I_{n,k}(\alpha)$ and $\tilde I_{n,k}(\alpha)$ that are essentially the expected external angles of beta and beta' simplices; see Theorems~\ref{theo:external} and~\ref{theo:external_prime}, below. It has been shown in~\cite{beta_polytopes} that many quantities appearing in stochastic geometry can be expressed in terms of $I_{n,k}(\beta)$, $\tilde I_{n,k}(\beta)$ and $J_{n,k}(\beta)$, $\tilde J_{n,k}(\beta)$. An incomplete list of such quantities is as follows:
\begin{itemize}
\item[(a)] The expected $f$-vectors of beta- and beta' polytopes. The beta polytopes are defined as random polytopes of the form $P_{n,d}^{\beta}:=[Z_1,\ldots,Z_n]$, where $Z_1,\ldots,Z_n$ are i.i.d.\ random points in $\R^d$ with distribution of the form $f_{d,\beta}$. The beta' polytope $\tilde P_{n,d}^\beta$ is defined similarly.
\item[(b)] Expected internal and external angles of beta and beta' polytopes, and, more generally, expected intrinsic conic volumes of their tangent cones.
\item[(c)] Expected $f$-vector of the zero cell of the Poisson hyperplane tessellation and expected $f$-vectors of the random polytopes in the half-sphere; see~\cite{kabluchko_poisson_zero} for a detailed study of these models.
\item[(d)] Expected $f$-vector of the typical Poisson-Voronoi cell.
\item[(e)] Constants appearing in the work of Reitzner~\cite{ReitznerCombinatorialStructure} on the asymptotics of the expected $f$-vectors of random polytopes approximating smooth convex bodies.
\item[(f)] External and internal angles of the regular simplex with $n$ vertices at its $k$-vertex faces. These coincide with the corresponding expected angles of the random Gaussian simplex~\cite{kabluchko_zaporozhets_gauss_simplex,goetze_kabluchko_zaporozhets}, and are given by $I_{n,k}(+\infty):= \lim_{\beta\uparrow +\infty} I_{n,k}(\beta)$ and $J_{n,k}(+\infty) := \lim_{\beta\uparrow +\infty} J_{n,k}(\beta)$, respectively.
\end{itemize}

While there exist explicit formulae for $I_{n,k}(\alpha)$ and $\tilde I_{n,k}(\alpha)$ (see Section~\ref{subsec:external}), no general formulae are known for $\bJ_{n,k}(\beta)$ and $\tilde \bJ_{n,k}(\beta)$ except in some special cases.  For example, we have $\bJ_{3,1} (\beta) = 1/2$ because the sum of angles in any plane triangle equals half the full angle.
For general $n\in\N$, it always holds that $\bJ_{n,n}(\beta) = 1$ and $\bJ_{n,n-1}(\beta) = n/2$, and all these formulae are valid in the beta' case, too.  A general combinatorial formula for $\tilde \bJ_{n, k}(n/2)$ was derived in~\cite{kabluchko_poisson_zero}, where it was used to compute the expected $f$-vector of the Poisson zero polytope. For $n=4$ and $n=5$, explicit formulae for $\bJ_{n,k}(\beta)$ were derived in~\cite{kabluchko_angles} by a method not allowing for an extension to higher dimensions. The main results of the present paper can be summarized as follows.    In Section~\ref{sec:main_results}, we derive a formula which enables us to compute $\bJ_{n,k}(\beta)$ and $\tilde \bJ_{n,k}(\beta)$ symbolically for half-integer $\beta$, and numerically for all admissible $\beta$. The main work for this formula has been done in~\cite{beta_polytopes} and~\cite{kabluchko_poisson_zero}, while the main aim of the present paper is to state it explicitly and to demonstrate its consequences. The latter will be done in Section~\ref{sec:applications}, where we apply the formula to compute (among other examples) the expected $f$-vectors of typical Poisson-Voronoi cells and the constants that appeared in the work of Reitzner~\cite{ReitznerCombinatorialStructure} on random polytopes approximating convex bodies, in  dimensions  up to $10$.


\subsection{Expected external angles}\label{subsec:external}
The  following two theorems define the quantities $I_{n,k}(\alpha)$ and $\tilde I_{n,k}(\alpha)$ and relate them to the expected external angles of beta and beta' simplices.  They are special cases of Theorems 1.6 and 1.16 in~\cite{beta_polytopes}, respectively.
\begin{theorem}\label{theo:external}
Let $X_1,\ldots,X_n$ be i.i.d.\ random points in $\R^{n-1}$ with beta density $f_{n-1,\beta}$, $\beta\geq -1$ (which is interpreted as the uniform distribution on the sphere $\bS^{n-2}$ if $\beta=-1$). Then, for all $k\in \{1,\ldots,n\}$, the expected external angle of the beta simplex $[X_1,\ldots,X_n]$ at its face $[X_1,\ldots,X_k]$ is given by
$$
\E \gamma ([X_1,\ldots,X_k], [X_1,\ldots,X_n]) = I_{n,k}(2\beta + n-1),
$$
where for $\alpha>-1/k$ we define
\begin{equation}\label{eq:I_n_k}
I_{n,k}(\alpha)
=\int_{-\pi/2}^{+\pi/2} c_{1,\frac{\alpha k - 1}{2}} (\cos \varphi)^{\alpha k} \left(\int_{-\pi/2}^\varphi c_{1,\frac{\alpha-1}{2}}(\cos \theta)^{\alpha} \,\dd \theta \right)^{n-k} \, \dd \varphi.
\end{equation}
\end{theorem}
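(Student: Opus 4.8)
The plan is to rewrite the expected external angle as a single one-dimensional integral over a ``height'' variable, using two structural features of the beta family: its stability under orthogonal projection, and the fact that the affine hull of several beta points is positioned according to a beta law. First I would fix the face $F=[X_1,\ldots,X_k]$, let $L=\lin(X_2-X_1,\ldots,X_k-X_1)$ be its $(k-1)$-dimensional direction space, $W=L^\perp$ the $(n-k)$-dimensional orthogonal complement in $\R^{n-1}$, and $p\in W$ the foot of the perpendicular from the origin to $\aff(X_1,\ldots,X_k)$. By definition $\gamma(F,[X_1,\ldots,X_n])$ is the normalized solid angle, measured inside $W$, of the normal cone of the simplex at $F$: a unit vector $u\in\bS^{n-k-1}\cap W$ belongs to it iff $\langle\cdot,u\rangle$ attains its maximum over the simplex on $F$. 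Since $u\perp L$, all vertices of $F$ share the projection $\langle X_i,u\rangle=\langle p,u\rangle=:h$, so the condition becomes $\langle X_j,u\rangle\le h$ for all $j>k$, and, writing $\omega_m$ for the surface area of the unit sphere in $\R^m$,
\[
\gamma(F,[X_1,\ldots,X_n])=\frac1{\omega_{n-k}}\int_{\bS^{n-k-1}\cap W}\ind\{\langle X_j,u\rangle\le h\ \text{for all}\ j>k\}\,\dd u.
\]

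Next I would take the conditional expectation over the free points $X_{k+1},\ldots,X_n$, which are i.i.d.\ and independent of the face. The event $\{\langle X_j,u\rangle\le h\}$ depends on $X_j$ only through its projection onto the line $\R u$, and by the projection property of the beta distribution this projection has the one-dimensional beta law $f_{1,(\alpha-1)/2}$ with $\alpha=2\beta+n-1$: projecting $f_{n-1,\beta}$ onto a line gives $f_{1,\beta+(n-2)/2}$, and $\beta+(n-2)/2=(\alpha-1)/2$. In the angular coordinate $\langle X_j,u\rangle=\sin\theta$ this density is $c_{1,(\alpha-1)/2}(\cos\theta)^\alpha$, and writing the height as $h=\sin\varphi$ the conditional expectation of the indicator factorizes, because the $n-k$ points are independent and the probability does not depend on the direction of $u$, into $\big(\int_{-\pi/2}^{\varphi}c_{1,(\alpha-1)/2}(\cos\theta)^\alpha\,\dd\theta\big)^{n-k}$. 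This already produces the inner integral of~\eqref{eq:I_n_k} raised to the power $n-k$.

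The crux is then to determine the distribution of the height $h=\langle p,u\rangle$ when $u$ is uniform on $\bS^{n-k-1}\cap W$ and $p$ is the random foot-point of the face flat. The key claim is that, conditionally on its direction space $L$, the foot-point $p$ is distributed in $W\cong\R^{n-k}$ according to the beta law $f_{n-k,P_0}$ with $P_0=k\beta+\tfrac{(k-1)n}{2}$. I would prove this by applying the affine Blaschke--Petkantschin formula to disintegrate the joint beta density of $X_1,\ldots,X_k$ into an integral over the affine flat they span and over their positions within it: writing $X_i=p+Y_i$ with $Y_i\in L$ and using $\|X_i\|^2=\|p\|^2+\|Y_i\|^2$ factors the density as $(1-\|p\|^2)^{k\beta}$ times a within-flat beta density rescaled by $\sqrt{1-\|p\|^2}$, while the Jacobian $\Delta_{k-1}(X_1,\ldots,X_k)^{n-k}$ supplies a further power of $1-\|p\|^2$; integrating out the within-flat positions collapses everything into the Lebesgue density $\propto(1-\|p\|^2)^{P_0}$ on $W$. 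Granting $p\sim f_{n-k,P_0}$, the projection property applies once more: $h=\langle p,u\rangle\sim f_{1,P_0+(n-k-1)/2}$, and a direct check gives $P_0+\tfrac{n-k-1}{2}=\tfrac{\alpha k-1}{2}$, so in the variable $h=\sin\varphi$ the law of $h$ has density $c_{1,(\alpha k-1)/2}(\cos\varphi)^{\alpha k}$, exactly the outer weight in~\eqref{eq:I_n_k}.

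Combining the two computations and integrating over $\varphi$ then yields $\E\gamma(F,[X_1,\ldots,X_n])=I_{n,k}(\alpha)$; the hypothesis $\alpha>-1/k$ is precisely the integrability condition for the outer weight $(\cos\varphi)^{\alpha k}$, and the limiting case $\beta=-1$ is recovered from the weak convergence of $f_{n-1,\beta}$ to the uniform distribution on $\bS^{n-2}$. I expect the main obstacle to be the bookkeeping in the third paragraph: one must track the Blaschke--Petkantschin simplex-volume power, the rescaling of the within-flat coordinates, and the passage from the radial variable $\|p\|$ to the projection $h$ so that every exponent collapses to the single value $\tfrac{\alpha k-1}{2}$. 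By contrast, the two projection steps should be routine once the beta projection lemma $f_{d,\beta}\mapsto f_{1,\beta+(d-1)/2}$ is available, so the identification of the foot-point as a beta vector with the correct parameter is what carries the argument.
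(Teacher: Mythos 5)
Your proposal is correct and follows essentially the route of the proof behind this statement: the paper itself does not prove Theorem~\ref{theo:external} but quotes it as a special case of Theorems~1.6 and~1.16 of~\cite{beta_polytopes}, and the argument there is exactly your combination of (i) the normal-cone description of the external angle via the common height $h=\langle p,u\rangle$, (ii) the projection property $f_{d,\beta}\mapsto f_{1,\beta+(d-1)/2}$ applied to $X_{k+1},\ldots,X_n$, and (iii) the canonical decomposition of Ruben and Miles (proved via the affine Blaschke--Petkantschin formula), which identifies the foot point $p$ as $f_{n-k,\,k\beta+(k-1)n/2}$-distributed. Your exponent bookkeeping checks out ($P_0+\tfrac{n-k-1}{2}=\tfrac{\alpha k-1}{2}$ with $\alpha=2\beta+n-1$), so the sketch is sound.
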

\begin{theorem}\label{theo:external_prime}
Let $\tilde X_1,\ldots,\tilde X_n$ be i.i.d.\ random points in $\R^{n-1}$ with beta' density $\tilde f_{n-1,\beta}$, where $\beta > \frac {n-1}{2}$.  Then, for all $k\in \{1,\ldots,n\}$,  the expected external angle of the beta' simplex $[\tilde X_1,\ldots,\tilde X_n]$ at its face $[\tilde X_1,\ldots,\tilde X_k]$ is given by
$$
\E \gamma ([\tilde X_1,\ldots,\tilde X_k], [\tilde X_1,\ldots,\tilde X_n]) = \tilde I_{n,k}(2\beta - n + 1),
$$
where for $\alpha>0$ we define
\begin{equation}\label{eq:I_n_k_tilde}
\tilde I_{n,k}(\alpha)
=\int_{-\pi/2}^{+\pi/2} \tilde c_{1,\frac{\alpha k + 1}{2}} (\cos \varphi)^{\alpha k-1} \left(\int_{-\pi/2}^\varphi \tilde c_{1,\frac{\alpha+1}{2}}(\cos \theta)^{\alpha-1} \,\dd \theta \right)^{n-k} \, \dd \varphi.
\end{equation}
\end{theorem}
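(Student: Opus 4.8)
Write $F=[\tilde X_1,\ldots,\tilde X_k]$ and $S=[\tilde X_1,\ldots,\tilde X_n]$. The plan is to pass from the external angle to the normalized solid angle of the normal cone, to decouple the face from the remaining vertices by an affine Blaschke--Petkantschin decomposition, and to integrate out all but one angular coordinate.

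First I would unwind the definition of the external angle. A vector $u$ is an outer normal of $S$ along $F$ precisely when $u\perp(\tilde X_i-\tilde X_1)$ for $2\le i\le k$ and $\langle u,\tilde X_j-\tilde X_1\rangle\le 0$ for $k<j\le n$. Hence the normal cone lives in the $(n-k)$-dimensional space $W:=\lin(\tilde X_2-\tilde X_1,\ldots,\tilde X_k-\tilde X_1)^{\perp}$ and equals $N(F,S)=\{u\in W:\langle u,\tilde X_j-\tilde X_1\rangle\le 0,\ k<j\le n\}$. Interpreting the external angle as the normalized spherical volume of $N(F,S)$ inside $W$,
$$
\E\,\gamma(F,S)=\E\!\left[\frac{1}{\omega_{n-k}}\int_{\bS\cap W}\prod_{j=k+1}^{n}\ind\{\langle u,\tilde X_j-\tilde X_1\rangle\le 0\}\,\sigma(\dd u)\right],
$$
where $\omega_{n-k}$ is the surface area of the unit sphere of $W$.

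The heart of the argument is to make this inner structure explicit. I would condition on the affine hull $H=\aff(F)$, a random $(k-1)$-flat, and apply the affine Blaschke--Petkantschin formula~\cite{SW08} to factor the joint density of $\tilde X_1,\ldots,\tilde X_n$ into the law of $H$, the configuration of the $k$ face points within $H$, and the $n-k$ independent remaining points. Because $\tilde f_{n-1,\beta}$ depends on $x$ only through $\|x\|$, each resulting slice is again of beta' type, and the projection property of the family (a dimension shift by $-(n-2)/2$ when projecting $\R^{n-1}\to\R$) turns the one-dimensional projection onto the normal line $\R u$ into $\tilde f_{1,\beta-(n-2)/2}$. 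In angular coordinates $t=\tan\theta$ this is the density $\propto(\cos\theta)^{\alpha-1}$ with $\alpha=2\beta-n+1$, matching \eqref{eq:I_n_k_tilde} and explaining both the exponent and the constant $\tilde c_{1,(\alpha+1)/2}$ via \eqref{eq:def_f_beta_prime}. The joint contribution of the $k$ face points, carried by the signed distance of $H$ to the origin in the direction $u$, accumulates the parameter $k$ times and produces the angular density $\propto(\cos\varphi)^{\alpha k-1}$ with constant $\tilde c_{1,(\alpha k+1)/2}$.

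After this decomposition the $n-k$ non-face points are conditionally i.i.d., and each halfspace condition $\langle u,\tilde X_j-\tilde X_1\rangle\le 0$ becomes $\{\theta_j\le\varphi\}$, where $\varphi$ encodes the tilt of $H$ and $\theta_j$ the corresponding angle of $\tilde X_j$. By conditional independence the product of indicators integrates to the $(n-k)$-th power of the single-point probability, which is exactly the inner integral in \eqref{eq:I_n_k_tilde}; the outer integral then averages over the face direction $\varphi$ against its density. I expect the main obstacle to be the exact bookkeeping in the Blaschke--Petkantschin step: one must check that the $k$ face points jointly contribute the exponent $\alpha k-1$ (and not, say, $k(\alpha-1)$), that the Jacobian combines with the $k$-fold radial factor to give precisely $\tilde c_{1,(\alpha k+1)/2}$, and that the dimension shift lands on $\alpha=2\beta-n+1$. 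The integrability threshold $\beta>(n-1)/2$, i.e.\ $\alpha>0$ (equivalently $\beta-(n-2)/2>1/2$), together with the easy cases $k=n$ and $k=n-1$, serves as a consistency check. Since the assertion is the specialization $\alpha=2\beta-n+1$ of~\cite[Theorem~1.16]{beta_polytopes}, one may also simply invoke that result.
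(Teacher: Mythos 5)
The paper offers no proof of this theorem at all: it is stated verbatim as a special case of Theorem~1.16 of~\cite{beta_polytopes}, so your closing remark (``one may also simply invoke that result'') is in fact exactly what the paper does, and on that level your proposal is correct. The rest of your sketch is a reasonable outline of what the cited proof actually does, and the checkable bookkeeping is right: the one-dimensional marginal of $\tilde f_{n-1,\beta}$ along a line is $\tilde f_{1,\beta-(n-2)/2}$, which under $t=\tan\theta$ has angular density $\tilde c_{1,\frac{\alpha+1}{2}}(\cos\theta)^{\alpha-1}$ with $\alpha=2\beta-n+1$, and the constraint $\beta>(n-1)/2$ is precisely $\alpha>0$. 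Two caveats. First, the step you correctly flag as the main obstacle --- that the signed height $\langle u,h\rangle$ of the flat $\aff(F)$ in a uniform normal direction $u$ of $W=\lin(\tilde X_2-\tilde X_1,\ldots,\tilde X_k-\tilde X_1)^{\perp}$ has the one-dimensional beta' law $\tilde f_{1,\frac{\alpha k+1}{2}}$ --- is the entire nontrivial content of the Ruben--Miles canonical decomposition, and your sketch asserts rather than derives it; as written it is a plausible plan, not a proof. Second, a small point of care in the reduction: after conditioning, the halfspace events $\{\langle u,\tilde X_j-\tilde X_1\rangle\le 0\}$ for $j>k$ are conditionally i.i.d.\ only given the pair $(u,\langle u,h\rangle)$ (using that the $\tilde X_j$, $j>k$, are independent of the face points and rotation invariant), so the $(n-k)$-th power of the single-point probability appears after this conditioning, not before; your phrasing is compatible with this but worth making explicit. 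With those provisos, your route is essentially the one taken in the cited source, and nothing in it would fail.
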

Usually, it will be more convenient to work with angle sums rather than with  individual angles, which is why we introduce the quantities
\begin{equation}
\bI_{n,k}(\alpha) := \binom nk I_{n,k}(\alpha),
\qquad
\tilde \bI_{n,k}(\alpha) := \binom nk \tilde I_{n,k}(\alpha).
\end{equation}
Note that $\bI_{n,n}(\alpha) = \tilde \bI_{n,n}(\alpha) = 1$.

\section{Main results}\label{sec:main_results}

\subsection{Algorithm for computing expected internal-angle sums}\label{sec:relations}
In the next proposition we state relations which enable us to express the quantities $\bJ_{n,k}(\beta)$ through the quantities $\bI_{n,k}(\alpha)$. The proof will be given in Section~\ref{subsec:proof_relations}, where we shall also discuss the similarity between these relations and McMullen's angle-sum relations~\cite{mcmullen,mcmullen_polyhedra} for deterministic polytopes.

\begin{proposition}\label{prop:relations}
For every $n\in \{2,3,\ldots\}$, $k\in \{1,\ldots,n-1\}$ and every $\beta\geq -1$  the following relations between the quantities $\bI_{n,m}(\alpha)$ and $\bJ_{m,k}(\beta)$ hold:
\begin{align}
&
\sum_{\substack{s=0,1,\ldots\\ n-s\geq k}}  \bI_{n,n-s}(2\beta+n-1)  \bJ_{n-s,k} \left(\beta+\frac s2\right) = \binom nk,\label{eq:relation_I_J_1}\\
&
\sum_{\substack{s=0,1,\ldots\\ n-s\geq k}}  (-1)^s\bI_{n,n-s}(2\beta+n-1)  \bJ_{n-s,k} \left(\beta+\frac s2\right) = 0.
\label{eq:relation_I_J_2}
\end{align}
Similarly, for every $n\in \{2,3,\ldots\}$, $k\in \{1,\ldots,n-1\}$ and for every $\beta>(n-1)/2$, the quantities $\tilde \bI_{n,m}(\alpha)$ and $\tilde \bJ_{m,k}(\beta)$ satisfy the following relations:
\begin{align}
&
\sum_{\substack{s=0,1,\ldots\\ n-s\geq k}} \tilde \bI_{n,n-s}(2\beta-n+1) \tilde \bJ_{n-s,k} \left(\beta-\frac s2\right) =  \binom nk,
\label{eq:relation_I_J_1_tilde}\\
&
\sum_{\substack{s=0,1,\ldots\\ n-s\geq k}} (-1)^s\tilde \bI_{n,n-s}(2\beta-n+1) \tilde \bJ_{n-s,k} \left(\beta-\frac s2\right) =  0.
\label{eq:relation_I_J_2_tilde}
\end{align}
\end{proposition}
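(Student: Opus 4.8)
The plan is to deduce both relations from two deterministic angle-sum identities applied to each realization of the random simplex, and then to take expectations while exploiting the distributional structure of the beta family. Fix a polytope $P$ and a face $G\subseteq P$, and let $\bar C$ be the tangent cone $T(G,P)$ reduced modulo its lineality space (the direction space of $\aff G$). The faces of $\bar C$ are in bijection with the faces $F$ of $P$ satisfying $G\subseteq F\subseteq P$; the internal angle of $\bar C$ at the face corresponding to $F$ equals $\beta(G,F)$, and the external angle of $\bar C$ there equals $\gamma(F,P)$. Consequently the total-mass relation and the conic Gauss--Bonnet relation for the conic intrinsic volumes of $\bar C$ translate into
\begin{equation*}
\sum_{F:\,G\subseteq F\subseteq P}\beta(G,F)\,\gamma(F,P)=1,
\qquad
\sum_{F:\,G\subseteq F\subseteq P}(-1)^{\dim F}\,\beta(G,F)\,\gamma(F,P)=0,
\end{equation*}
the second holding whenever $G\neq P$ (so that $\bar C$ is not a linear subspace). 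These are precisely McMullen's angle-sum relations.

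Next I would apply both identities to $P=[X_1,\ldots,X_n]$ with $G=[X_1,\ldots,X_k]$ and take expectations. Grouping the faces $F\supseteq G$ by their number of vertices $m\in\{k,\ldots,n\}$, exchangeability shows that there are $\binom{n-k}{m-k}$ such faces with exactly $m$ vertices, each contributing the same expectation $\E[\beta(G,F)\,\gamma(F,P)]$, while the sign $(-1)^{\dim F}=(-1)^{m-1}$ depends only on $m$. Everything thus reduces to evaluating this single expectation.

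The crux is to factor $\E[\beta(G,F)\,\gamma(F,P)]$. Here I would invoke the canonical (Ruben--Miles) decomposition of the beta distribution: conditionally on the affine hull $H:=\aff(X_1,\ldots,X_m)$, the points $X_1,\ldots,X_m$ are i.i.d.\ beta distributed inside the $(m-1)$-dimensional space $H$ with the shifted parameter $\beta+\tfrac{n-m}{2}$, and this in-hull configuration is independent of the orthogonal components of $X_{m+1},\ldots,X_n$ and of the orientation of $H$. Since $\beta(G,F)$ is a function of the in-hull configuration alone, while $\gamma(F,P)$ depends only on the complementary data (the normal cone lies in $(\lin F)^\perp$ and is determined by the projections of $X_{m+1},\ldots,X_n$), the two are conditionally independent given $H$; moreover $\E[\beta(G,F)\mid H]=J_{m,k}(\beta+\tfrac{n-m}{2})$ is constant in $H$ by rotational invariance. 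Hence
\begin{equation*}
\E[\beta(G,F)\,\gamma(F,P)]
=J_{m,k}\!\left(\beta+\tfrac{n-m}{2}\right)\,\E[\gamma(F,P)]
=J_{m,k}\!\left(\beta+\tfrac{n-m}{2}\right) I_{n,m}(2\beta+n-1),
\end{equation*}
where the external factor is evaluated through Theorem~\ref{theo:external}. I expect this conditional-independence and parameter-shift step to be the main obstacle, since it is the only point at which the special structure of the beta family is used.

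Finally I would assemble the pieces. Using the identity $\binom{n-k}{m-k} I_{n,m} J_{m,k}=\binom{n}{k}^{-1}\bI_{n,m}\bJ_{m,k}$ and re-indexing by $s=n-m$, the two deterministic relations above turn into \eqref{eq:relation_I_J_1} and \eqref{eq:relation_I_J_2}, respectively; the restriction $k\le n-1$ matches exactly the requirement $G\neq P$ needed for the alternating relation, while the non-alternating relation is trivially valid also at $k=n$. For the beta' simplex the argument is verbatim, except that the canonical decomposition now \emph{lowers} the parameter to $\beta-\tfrac{n-m}{2}$ and the relevant external factor is $\tilde I_{n,m}(2\beta-n+1)$ from Theorem~\ref{theo:external_prime}; the hypothesis $\beta>(n-1)/2$ guarantees that every induced in-hull distribution remains a genuine beta' law. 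This produces \eqref{eq:relation_I_J_1_tilde} and \eqref{eq:relation_I_J_2_tilde}.
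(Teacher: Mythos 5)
Your proposal is correct and coincides essentially with the paper's own argument: the paper's main proof applies the total-mass identity and the conic Gauss--Bonnet relation to the conic intrinsic volumes of the tangent cone $T(G,P)$ (citing the precomputed expectations $\E\upsilon_j$ from the beta-polytopes paper), and the Remark immediately following gives exactly your version via McMullen's angle-sum relations, exchangeability, the Ruben--Miles canonical decomposition for the independence of $\beta(G,F)$ and $\gamma(F,P)$, and the parameter-shift formula $\E\beta([X_1,\ldots,X_k],[X_1,\ldots,X_m])=J_{m,k}(\beta+\tfrac{n-m}{2})$. Your observation that McMullen's relations themselves follow from the conic intrinsic-volume identities applied to the reduced tangent cone neatly connects the paper's two presentations, but it is not a genuinely different route.
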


We now explain how these relations can be used to compute the quantities $\bJ_{n,k}(\beta)$ and $\tilde \bJ_{n,k}(\beta)$. Since the results in these two cases are similar to each other, we restrict ourselves to $\bJ_{n,k}(\beta)$ and state the results for $\tilde \bJ_{n,k}(\beta)$ at the end of the section.
First of all, we have $\bJ_{1,1}(\beta) = 1$. Assume that for some $n\in \{2,3,\ldots\}$ we are able to compute (symbolically or numerically) the quantities $\bJ_{m,k}(\gamma)$ with arbitrary $m\in \{1,\ldots,n-1\}$, $k\in \{1,\ldots,m\}$, and $\gamma\geq -1/2$. We are going to compute the quantities $\bJ_{n,k}(\beta)$ with $k\in \{1,\ldots,n\}$ and $\beta\geq -1$. If $k=n$, we trivially have $\bJ_{n,n}(\beta) = 1$. For  $k\in \{1,\ldots,n-1\}$ we use the formula
\begin{equation}\label{eq:alg_J_1}
\bJ_{n,k}(\beta)
=
\binom {n}{k} - \sum_{s=1}^{n-k}  \bI_{n,n-s}(2\beta+n-1) \bJ_{n-s,k}\left(\beta+\frac s2\right),
\end{equation}
which follows from~\eqref{eq:relation_I_J_1} by separating the term with $s=0$. Note that on the right-hand side we have the quantities of the type $\bI_{n,n-s}(\gamma)$ (which are just trigonometric integrals; see Section~\ref{subsec:external}) and the quantities $\bJ_{n-s,k}(\beta+\frac s 2)$ which  are already assumed to be known by the induction assumption since $n-s < n$.

The above recursive procedure allows us to express $\bJ_{n,k}(\beta)$ as a polynomial in the variables $\bI_{m,\ell}(2\beta+n-1)$ with $1\leq \ell < m \leq n$. Note that all terms have the same $\beta$-parameter $2\beta+n-1$. For example, for $n=4$ we obtain
\begin{align*}
\bJ_{4,1}(\beta)
&=
3 -2 \bI_{4,3}(3 + 2 \beta) - \bI_{4,2}(3 + 2 \beta)  + \bI_{4,3}(3 + 2 \beta) \bI_{3,2}(3 + 2 \beta),\\
\bJ_{4,2}(\beta)
&=
6 -3 \bI_{4,3}(3 + 2 \beta)  - \bI_{4,2}(3 + 2 \beta) + \bI_{4,3}(3 + 2 \beta) \bI_{3,2}(3 + 2 \beta),\\
\bJ_{4,3}(\beta) &=  4-\bI_{4,3}(3 + 2 \beta),\\
\bJ_{4,4}(\beta) &=1.
\end{align*}
We simplified the first line by using that $\bI_{n,1}(\alpha)=1$. Also, note that, in fact, $\bI_{3,2}(\alpha) = 3/2$ and $\bI_{4,3}(\alpha) = 2$.  More generally, we shall prove the following
\begin{theorem}\label{theo:formula_J_1}
For every $\beta\geq -1$, $n\in\N$ and $k\in \{1,\ldots,n\}$ we have
$$
\bJ_{n,k}(\beta) = \sum_{\ell=0}^{n-k} (-1)^\ell \sum \bI_{n, n_1}(2\beta+n-1) \bI_{n_1,n_2} (2\beta + n-1) \ldots \bI_{n_{\ell-1}, n_\ell}(2\beta + n-1) \binom {n_\ell}{k},
$$
where the second sum is taken over all integer tuples $(n_0, n_1,\ldots,n_{\ell})$ such that $n=n_0>n_1>\ldots>n_\ell\geq k$.
\end{theorem}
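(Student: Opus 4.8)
The plan is to prove the formula by induction on $n$, using the recursion \eqref{eq:alg_J_1} as the engine. The case $k=n$ requires no induction: the only admissible tuple is $(n_0)=(n)$, the product of $\bI$-factors is empty, and the $\ell=0$ term contributes $\binom nn=1=\bJ_{n,n}(\beta)$, as required. For $k<n$ I would assume the asserted formula holds for every $\bJ_{m,k'}(\gamma)$ with $m<n$ and insert the induction hypothesis into the right-hand side of \eqref{eq:alg_J_1}.

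The one observation that makes everything collapse is a parameter invariance. In \eqref{eq:alg_J_1} the factor $\bJ_{n-s,k}$ carries the shifted parameter $\beta+\tfrac s2$, but
\[
2\left(\beta+\tfrac s2\right) + (n-s) - 1 = 2\beta + n - 1,
\]
so \emph{every} factor $\bI_{\,\cdot\,,\,\cdot\,}$ produced at any depth of the recursion carries the \emph{same} parameter $2\beta+n-1$. Writing $m=n-s$ (so that $k\le m\le n-1$), the recursion reads
\[
\bJ_{n,k}(\beta) = \binom nk - \sum_{m=k}^{n-1} \bI_{n,m}(2\beta+n-1)\,\bJ_{m,k}\!\left(\beta+\tfrac{n-m}{2}\right),
\]
and by the induction hypothesis each $\bJ_{m,k}(\beta+\tfrac{n-m}2)$ is a signed sum over chains $m=m_0>m_1>\dots>m_{\ell'}\ge k$ of products $\bI_{m_0,m_1}\cdots\bI_{m_{\ell'-1},m_{\ell'}}\binom{m_{\ell'}}{k}$, all with parameter $2\beta+n-1$.

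It then remains to match this with the target formula by a chain-decomposition argument. I would split the target sum according to $\ell$: the term $\ell=0$ gives exactly $\binom nk$, matching the leading term above. For $\ell\ge1$, each admissible chain $n=n_0>n_1>\dots>n_\ell\ge k$ has a well-defined first descent $n_1=:m$ with $k\le m<n$; factoring out $\bI_{n,m}(2\beta+n-1)$ leaves a chain $m=n_1>n_2>\dots>n_\ell\ge k$ of length $\ell-1$ terminating in $\binom{n_\ell}{k}$. Setting $\ell'=\ell-1$ and absorbing the sign $(-1)^\ell=-(-1)^{\ell'}$, the residual chains are precisely those enumerated by the induction hypothesis for $\bJ_{m,k}(\beta+\tfrac{n-m}2)$. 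Summing over the first descent $m$ therefore reconstructs $-\sum_{m=k}^{n-1}\bI_{n,m}(2\beta+n-1)\,\bJ_{m,k}(\beta+\tfrac{n-m}2)$, which is exactly the sum in the recursion. Hence the target formula and \eqref{eq:alg_J_1} agree, closing the induction.

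The substance of the argument is thus entirely combinatorial: it is the bijection between admissible tuples of positive length for $\bJ_{n,k}$ and pairs consisting of a first descent $n\to m$ together with an admissible tuple for $\bJ_{m,k}$. The only points demanding care are (i) the parameter invariance displayed above, which guarantees that the factors produced at different recursion depths genuinely coincide and so can be concatenated into a single chain, and (ii) the compatibility of the summation ranges: a chain from $m$ down to $k$ admits at most $m-k$ descents, so the upper limits on $\ell$ and $\ell'$ match automatically and no spurious terms appear. I anticipate no analytic difficulty at all; once the invariance is in hand, the entire step is bookkeeping.
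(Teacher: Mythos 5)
Your proof is correct and follows essentially the same route as the paper's: induction on $n$, feeding the induction hypothesis into the recursion \eqref{eq:alg_J_1} and reindexing chains by their first descent, with the key point being the invariance $2(\beta+\tfrac s2)+(n-s)-1=2\beta+n-1$ (which the paper uses implicitly when stating the induction hypothesis with parameter $2\beta+n-1$). No gaps.
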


The following equation, which follows from~\eqref{eq:relation_I_J_1} and~\eqref{eq:relation_I_J_2} by taking the arithmetic mean, is more efficient for computational purposes since it contains less terms than~\eqref{eq:alg_J_1}:
\begin{equation}\label{eq:alg_J_2}
\bJ_{n,k}(\beta)
=
\frac 12 \binom {n}{k} - \sum_{s=1}^{\lfloor \frac{n-k}{2}\rfloor}  \bI_{n,n-2s}(2\beta+n-1) \bJ_{n-2s,k}(\beta+s).
\end{equation}
For example, the first few non-trivial values of the internal-angles vector
$$
\bJ_{n,\bullet}(\beta):= (\bJ_{n,1}(\beta), \ldots, \bJ_{n,n}(\beta))
$$
are given by
\begin{flalign*}
\bJ_{4,\bullet}(\beta)
&=
(2-\bI_{4,2}(2 \beta+3), 3-\bI_{4,2}(2 \beta+3),2,1),&\\
\bJ_{5,\bullet}(\beta)
&=
\Bigg(\frac{3}{2}-\frac{1}{2} \bI_{5,3}(2 \beta+4), 5-\frac{3}{2} \bI_{5,3}(2 \beta+4),5-\bI_{5,3}(2 \beta+4),\frac{5}{2},1\Bigg),&\\
\bJ_{6,\bullet}(\beta)
&=
\Bigg(3-\bI_{6,2}(2 \beta+5)+\bI_{4,2}(2 \beta+5) \bI_{6,4}(2 \beta+5)-2 \bI_{6,4}(2 \beta+5),&\\
&\phantom{=\Bigg(}\frac{15}{2} -\bI_{6,2}(2 \beta+5)+\bI_{4,2}(2 \beta+5) \bI_{6,4}(2 \beta+5)-3 \bI_{6,4}(2 \beta+5),&\\
&\phantom{=\Bigg(}  10-2 \bI_{6,4}(2 \beta+5),\frac{15}{2}-\bI_{6,4}(2 \beta+5),3,1\Bigg)
.&
\end{flalign*}
Generalizing these formulae, we can prove the following
\begin{theorem}\label{theo:formula_J_2}
For every $\beta\geq -1$, $n\in\N$ and $k\in \{1,\ldots,n\}$ we have
$$
2 \bJ_{n,k}(\beta) -\delta_{n,k}
=
\sum_{\ell=0}^{\lfloor \frac{n-k}{2}\rfloor} (-1)^\ell \sum \bI_{n, n_1}(2\beta+n-1) \bI_{n_1,n_2} (2\beta + n-1) \ldots \bI_{n_{\ell-1}, n_\ell}(2\beta + n-1) \binom {n_\ell}{k},
$$
where $\delta_{n,k}$ is Kronecker's delta, and the sum is taken over all integer tuples $(n_0,n_1,\ldots,n_\ell)$ such that  $n=n_0>n_1>\ldots>n_\ell\geq k$ and such that $n-n_i$ is even for all $i\in\{1,\ldots,\ell\}$.
\end{theorem}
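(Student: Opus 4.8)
The plan is to prove Theorem~\ref{theo:formula_J_2} by induction on $n$ (equivalently, on the gap $n-k$), paralleling the way Theorem~\ref{theo:formula_J_1} unrolls the one-step recursion~\eqref{eq:alg_J_1}, but now building on the more efficient even-step recursion~\eqref{eq:alg_J_2}. The structural fact that makes the scheme close up is a parameter invariance: setting $\alpha:=2\beta+n-1$, the substitution $(n,\beta)\mapsto(n-2s,\beta+s)$ leaves $\alpha$ unchanged, since $2(\beta+s)+(n-2s)-1=2\beta+n-1$. Hence every factor $\bI$ produced while iterating~\eqref{eq:alg_J_2} downward from $n$ carries the single argument $2\beta+n-1$ displayed in the statement, which is exactly what lets the nested sums be repackaged as one sum over decreasing chains $n=n_0>n_1>\dots>n_\ell\ge k$.

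For the inductive step, fix $k<n$ and write~\eqref{eq:alg_J_2} in cleared form, $2\bJ_{n,k}(\beta)=\binom nk-2\sum_{s=1}^{\lfloor(n-k)/2\rfloor}\bI_{n,n-2s}(2\beta+n-1)\,\bJ_{n-2s,k}(\beta+s)$. Into each summand I substitute the induction hypothesis for $2\bJ_{n-2s,k}(\beta+s)$, read as the chain sum attached to the smaller simplex. By parameter invariance the $\bI$-arguments match, so prepending the single even edge $n\to n-2s$ to a chain counted by $\bJ_{n-2s,k}$ yields a legitimate chain counted by $\bJ_{n,k}$; the parity constraint is inherited because $n-(n-2s)=2s$ is even and $n-2s\equiv n_i$ already holds on the old nodes. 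The head term $\binom nk$ is the $\ell=0$ chain, and the sign bookkeeping $(-1)^\ell=-(-1)^{\ell-1}$ matches the explicit minus sign in~\eqref{eq:alg_J_2}. For chains whose terminal node exceeds $k$ this reindexing reassembles the claimed sum verbatim; the chains that reach the boundary $n_\ell=k$ are the only ones requiring separate attention.

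The base case $n=k$ gives $2\bJ_{k,k}-\delta_{k,k}=2-1=1=\binom kk$, matching the single $\ell=0$ chain. The one genuinely delicate point—and the step I would scrutinize most carefully—is precisely the treatment of these terminal contributions, i.e.\ the chains with $n_\ell=k$. This is where Theorem~\ref{theo:formula_J_2} departs structurally from Theorem~\ref{theo:formula_J_1}: the one-step recursion~\eqref{eq:alg_J_1} has constant term $\binom mk$, which agrees with the boundary value $\bJ_{k,k}=1=\binom kk$, so no correction appears; the even recursion~\eqref{eq:alg_J_2}, by contrast, carries the halved constant $\tfrac12\binom mk$, while the boundary value is still $\bJ_{k,k}=1$. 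Feeding a value of $1$ into a recursion whose generic constant is $\tfrac12\binom mk$ creates a ``defect'' localized at the nodes where a chain hits $k$, and carrying this defect correctly through the induction is the crux. Concretely, one must check exactly how it enters—whether it telescopes into the lone Kronecker term $\delta_{n,k}$ on the left, or whether it leaves a residual contribution supported on the even chains that terminate at $k$—and match the outcome against the stated identity; everything else is mechanical chain reindexing.

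Finally I would record two points. First, the beta$'$ version is obtained verbatim from the even combination of~\eqref{eq:relation_I_J_1_tilde} and~\eqref{eq:relation_I_J_2_tilde}, with $2\beta-n+1$ in place of $2\beta+n-1$ and the matching invariance $2(\beta-s)-(n-2s)+1=2\beta-n+1$. Second, a cleaner non-inductive route is available and, I expect, the most transparent way to settle the boundary bookkeeping that the induction isolates: fix $\alpha$, regard $\bigl(\bI_{n,m}(\alpha)\bigr)_{m\le n}$ as a unipotent lower-triangular matrix $M$, and put $D=\diag((-1)^i)$. Then~\eqref{eq:relation_I_J_1} reads $M\,\mathbf b=\mathbf c$ and~\eqref{eq:relation_I_J_2} reads $(DMD)\,\mathbf b=\mathrm{Id}$, where $\mathbf b$ is the angle-sum array and $\mathbf c=\bigl(\binom mk\bigr)$. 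Averaging these two linear systems gives $P\,\mathbf b=\tfrac12(\mathbf c+\mathrm{Id})$ with $P=\tfrac12(M+DMD)$ the even part of $M$; hence $2\,\mathbf b=P^{-1}\mathbf c+P^{-1}$, and expanding the Neumann series for $P^{-1}$ turns $P^{-1}\mathbf c$ into the chain sum of the theorem while exhibiting the terminal defect precisely as the extra block $P^{-1}$.
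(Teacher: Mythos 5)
Your setup coincides with the paper's (which disposes of this theorem in one sentence, asserting that it follows ``analogously'' to Theorem~\ref{theo:formula_J_1} by unrolling~\eqref{eq:alg_J_2} instead of~\eqref{eq:alg_J_1}), and your observations about the invariance of the parameter $2\beta+n-1$ under $(n,\beta)\mapsto(n-2s,\beta+s)$ and about prepending even edges to chains are correct. But you stop exactly at the step you yourself single out as the crux --- the chains terminating at $n_\ell=k$ --- and leaving it open is a genuine gap, not a formality. Carrying the bookkeeping through: with $u_m:=2\bJ_{m,k}\bigl(\beta+\frac{n-m}{2}\bigr)$, relation~\eqref{eq:alg_J_2} gives $u_m=\binom mk-\sum_{s\ge1}\bI_{m,m-2s}(2\beta+n-1)\,u_{m-2s}$ for $m>k$, while the boundary value is $u_k=2=\binom kk+1$ rather than $\binom kk$. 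Consequently the defect does \emph{not} telescope into the single Kronecker term on the left-hand side: it propagates, and every chain whose terminal node is $n_\ell=k$ acquires one extra unit. What the induction actually yields (equivalently, your Neumann expansion of $P^{-1}$, where the extra block is the full column $P^{-1}e_k$, i.e.\ all even chains from $n$ down to $k$, not merely its diagonal entry $\delta_{n,k}$) is the identity with terminal factor $\binom{n_\ell}{k}+\delta_{n_\ell,k}$ in place of $\binom{n_\ell}{k}$.

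The two versions differ precisely when $n\equiv k\pmod 2$ and $n>k$, and the paper's own displayed examples show the corrected version is the true one. For $n=4$, $k=2$ the paper gives $\bJ_{4,2}(\beta)=3-\bI_{4,2}(2\beta+3)$, so $2\bJ_{4,2}(\beta)-\delta_{4,2}=6-2\,\bI_{4,2}(2\beta+3)$, whereas the stated chain sum equals $\binom 42-\bI_{4,2}(2\beta+3)\binom 22=6-\bI_{4,2}(2\beta+3)$; likewise for $n=6$, $k=2$ the stated sum gives $15-6\bI_{6,4}-\bI_{6,2}+\bI_{6,4}\bI_{4,2}$ while $2\bJ_{6,2}$ equals $15-6\bI_{6,4}-2\bI_{6,2}+2\bI_{6,4}\bI_{4,2}$. (No such issue arises in Theorem~\ref{theo:formula_J_1}, because the one-step recursion~\eqref{eq:alg_J_1} has constant term $\binom mk$ matching $\bJ_{k,k}=\binom kk$.) So the ``delicate point'' you flag is exactly where the proof must be done, and doing it shows the statement requires the terminal correction; your matrix route is the cleanest way to settle this, provided you track $P^{-1}e_k$ in full rather than replacing it by $\delta_{n,k}$.
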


The quantities $\tilde \bJ_{n,k}(\beta)$ can be computed in a similar manner. We put  $\tilde \bJ_{1,1}(\beta) = 1$ and then use the recursive formula
$$
\tilde \bJ_{n,k}(\beta)
=
\binom {n}{k} - \sum_{s=1}^{n-k} \tilde \bI_{n,n-s}(2\beta-n+1)\tilde \bJ_{n-s,k}\left(\beta-\frac s2\right)
$$
that follows from~\eqref{eq:relation_I_J_1_tilde}.
Alternatively, one can use the more efficient formula
$$
\tilde \bJ_{n,k}(\beta)
=
\frac 12 \binom {n}{k} - \sum_{s=1}^{\lfloor \frac{n-k}{2}\rfloor} \tilde \bI_{n,n-2s}(2\beta-n+1)\tilde \bJ_{n-2s,k}(\beta-s)
$$
which follows from~\eqref{eq:relation_I_J_1_tilde} and~\eqref{eq:relation_I_J_2_tilde} by taking their arithmetic mean.
The next two theorems are similar to Theorems~\ref{theo:formula_J_1} and~\ref{theo:formula_J_2}. We omit their straightforward proofs.
\begin{theorem}\label{theo:formula_J_1_tilde}
For every $\beta > (n-1)/2$, $n\in\N$ and $k\in \{1,\ldots,n\}$ we have
$$
\tilde \bJ_{n,k}(\beta) = \sum_{\ell=0}^{n-k} (-1)^\ell \sum_{n=n_0>n_1>\ldots>n_\ell\geq k} \tilde \bI_{n, n_1}(2\beta - n + 1) \tilde \bI_{n_1,n_2} (2\beta - n + 1) \ldots \tilde \bI_{n_{\ell-1}, n_\ell}(2\beta - n + 1) \binom {n_\ell}{k},
$$
where the second sum is taken over all integer tuples $(n_0, n_1,\ldots,n_{\ell})$ such that $n=n_0>n_1>\ldots>n_\ell\geq k$.
\end{theorem}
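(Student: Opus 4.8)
The plan is to induct on $n$, mirroring the (omitted) proof of Theorem~\ref{theo:formula_J_1} in the beta case, the only structural differences being the replacement of $2\beta+n-1$ by $2\beta-n+1$ and of the shift $\beta+s/2$ by $\beta-s/2$. The base cases $n=1$ and, more generally, $k=n$ are immediate: the right-hand side then reduces to its $\ell=0$ term $\binom{n}{n}=1=\tilde\bJ_{n,n}(\beta)$. For the inductive step I fix $n\geq 2$ and $k\in\{1,\ldots,n-1\}$, assume the claimed formula for every $m<n$ (all admissible $k$ and $\beta$), and start from the recursion
$$
\tilde\bJ_{n,k}(\beta) = \binom{n}{k} - \sum_{s=1}^{n-k} \tilde\bI_{n,n-s}(2\beta-n+1)\, \tilde\bJ_{n-s,k}\!\left(\beta-\tfrac s2\right),
$$
obtained from~\eqref{eq:relation_I_J_1_tilde} by isolating the $s=0$ summand and using $\tilde\bI_{n,n}=1$.

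The crucial observation, and the one that makes the telescoping close, is that the external-angle argument is invariant under the recursion. Indeed, when I apply the induction hypothesis to $\tilde\bJ_{n-s,k}(\beta-s/2)$, every $\tilde\bI$-factor it produces carries the parameter $2(\beta-s/2)-(n-s)+1 = 2\beta-n+1$, exactly the value already borne by the outer factor $\tilde\bI_{n,n-s}(2\beta-n+1)$. Hence after full expansion every $\tilde\bI$ occurring has the single argument $2\beta-n+1$, as the statement requires.

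Next I substitute the induction hypothesis for $\tilde\bJ_{n-s,k}(\beta-s/2)$ and reindex. Writing $n_1:=n-s$, the outer sum over $s\in\{1,\ldots,n-k\}$ becomes a sum over $n_1\in\{k,\ldots,n-1\}$, and each chain $n_1=m_0>m_1>\cdots>m_{\ell'}\geq k$ furnished by the hypothesis extends to a chain $n=n_0>n_1>\cdots>n_\ell\geq k$ with $\ell=\ell'+1$ via $n_i=m_{i-1}$. The sign bookkeeping is the only point needing attention: the factor $-\tilde\bI_{n,n_1}$ contributes one extra minus sign, turning the inner $(-1)^{\ell'}$ into $(-1)^{\ell'+1}=(-1)^\ell$, while the leftover $\binom{n}{k}$ supplies precisely the $\ell=0$ term. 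Collecting the $\ell=0$ term together with all chains of length $\ell\geq 1$ reproduces the asserted double sum.

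The main, and essentially only, obstacle is thus purely bookkeeping: verifying the parameter invariance $2(\beta-s/2)-(n-s)+1=2\beta-n+1$ and matching signs and chain lengths under the reindexing $n_i=m_{i-1}$. Once these are checked the formula follows, and the half-integer symbolic and numerical computability statements transfer verbatim from the beta case, which is why the proof is omitted as straightforward.
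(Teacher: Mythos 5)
Your proof is correct and follows exactly the route the paper intends: the paper omits the beta$'$ proof as "straightforward" precisely because it is the verbatim adaptation of the induction used for Theorem~\ref{theo:formula_J_1}, with the recursion from~\eqref{eq:relation_I_J_1_tilde} and the parameter invariance $2(\beta-\tfrac s2)-(n-s)+1=2\beta-n+1$ playing the role you describe. Your verification that $\beta-\tfrac s2>(n-s-1)/2$ remains admissible under the induction hypothesis is the only point worth making explicit, and you handle it correctly.
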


\begin{theorem}\label{theo:formula_J_2_tilde}
For every $\beta > (n-1)/2$, $n\in\N$ and $k\in \{1,\ldots,n\}$ we have
$$
2\tilde \bJ_{n,k}(\beta) - \delta_{n,k} = \sum_{\ell=0}^{\lfloor \frac{n-k}{2}\rfloor} (-1)^\ell \sum \tilde \bI_{n, n_1}(2\beta - n + 1) \tilde \bI_{n_1,n_2} (2\beta - n + 1) \ldots \tilde \bI_{n_{\ell-1}, n_\ell}(2\beta - n + 1) \binom {n_\ell}{k},
$$
where $\delta_{n,k}$ is Kronecker's delta, and the sum is taken over all integer tuples $(n_0,n_1,\ldots,n_\ell)$ such that  $n=n_0>n_1>\ldots>n_\ell\geq k$ and such that $n-n_i$ is even for all $i\in\{1,\ldots,\ell\}$.
\end{theorem}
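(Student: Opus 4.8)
The plan is to prove the identity by induction on $n$ with $k$ fixed, in exact parallel to the derivation of Theorem~\ref{theo:formula_J_2} in the beta case. The starting point is the efficient beta' recursion
$$
\tilde \bJ_{n,k}(\beta) = \frac 12 \binom {n}{k} - \sum_{s=1}^{\lfloor \frac{n-k}{2}\rfloor} \tilde \bI_{n,n-2s}(2\beta-n+1)\,\tilde \bJ_{n-2s,k}(\beta-s),
$$
which is the arithmetic mean of~\eqref{eq:relation_I_J_1_tilde} and~\eqref{eq:relation_I_J_2_tilde}. The first thing I would record is the invariance of the spectral parameter: writing $\alpha:=2\beta-n+1$, the replacement $(n,\beta)\mapsto(n-2s,\beta-s)$ leaves $2\beta-n+1$ unchanged, and the admissibility constraint $\beta>(n-1)/2$ is equivalent to $\alpha>0$, hence is preserved all the way down the recursion. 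Consequently every factor $\tilde\bI$ that can ever appear carries the single common argument $\alpha=2\beta-n+1$, which is exactly why the claimed closed form has all its $\tilde\bI$-parameters equal to $2\beta-n+1$.

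With this in hand I would run the induction. For the base cases, $n=k$ gives $\tilde\bJ_{k,k}=1$ and the right-hand side reduces to the single empty chain $(n_0)=(k)$ of value $\binom kk$, while $n=k+1$ makes the sum over $s$ empty, so the recursion yields $\tilde\bJ_{n,k}=\frac12\binom nk$, matching the lone $\ell=0$ term. For the inductive step I would multiply the recursion by $2$ and substitute the induction hypothesis for each $2\tilde\bJ_{n-2s,k}(\beta-s)$ on the right, so that each summand becomes $-\tilde\bI_{n,n-2s}(\alpha)$ times the signed, parity-restricted chain sum attached to $n-2s$ (plus the boundary contribution discussed below).

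The core of the argument is then a reindexing. Prepending the initial step $n\to n_1:=n-2s$, which is even and satisfies $n_1\ge k$, to an even-step chain $n-2s=m_0>m_1>\cdots>m_{\ell'}\ge k$ produces an even-step chain $n=n_0>n_1>\cdots>n_{\ell'+1}\ge k$; the sign acquires one extra factor $-1$ (so $(-1)^{\ell'}\mapsto(-1)^{\ell'+1}$), and the product of $\tilde\bI$-factors together with the terminal binomial $\binom{\cdot}{k}$ are carried along unchanged. I would verify that this map is a bijection between pairs (admissible $s$, even-step chain from $n-2s$) and even-step chains from $n$ of length $\ell\ge1$; combined with the $\ell=0$ term $\binom nk$ coming from $\frac12\binom nk$ doubled, this reassembles the purported right-hand side.

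The step I expect to be the only genuinely non-formal one is the bookkeeping at the bottom of the chains, that is, when $n-k$ is even and a chain is permitted to terminate at $n_\ell=k$. There the recursion feeds in the constant $\tfrac12\binom kk$, whereas the true boundary value is $\tilde\bJ_{k,k}=1$; equivalently, the induction hypothesis for $2\tilde\bJ_{n-2s,k}$ contributes a Kronecker term $\delta_{n-2s,k}$ beyond the chain sum, which after summation produces a stray boundary term $\tilde\bI_{n,k}(\alpha)\,\ind_{\{n-k \text{ even}\}}$. Reconciling this terminal contribution with the $-\delta_{n,k}$ on the left-hand side, and checking that the parity constraint $n-n_i\in 2\Z$ selects exactly the surviving chains with the correct terminal weight at $n_\ell=k$, is where I would concentrate the care; everything upstream is the same formal rearrangement that proves Theorem~\ref{theo:formula_J_2}, now with $\tilde\bI_{\cdot,\cdot}(2\beta-n+1)$ replacing $\bI_{\cdot,\cdot}(2\beta+n-1)$ and the downward shift $\beta\mapsto\beta-s$ replacing the upward shift $\beta\mapsto\beta+s$.
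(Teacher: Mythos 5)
Your strategy---induct on the averaged recursion, observe that $\alpha:=2\beta-n+1$ is invariant under $(n,\beta)\mapsto(n-2s,\beta-s)$, and reindex by prepending the step $n\to n-2s$ to shorter chains---is exactly the template the paper uses for Theorem~\ref{theo:formula_J_1} and indicates for Theorem~\ref{theo:formula_J_2}. But the step you flag and then defer (the terminal bookkeeping at $n_\ell=k$) is not a reconcilable detail: it is a genuine obstruction, and the proof does not close. Writing $\tilde S_{n,k}$ for the right-hand side of the theorem, your own computation gives, for $n>k$,
$$
2\tilde\bJ_{n,k}(\beta)\;=\;\binom nk-\sum_{s\ge 1}\tilde\bI_{n,n-2s}(\alpha)\bigl(\tilde S_{n-2s,k}+\delta_{n-2s,k}\bigr)\;=\;\tilde S_{n,k}-\tilde\bI_{n,k}(\alpha)\,\ind_{\{n\equiv k\!\!\!\Mod 2\}},
$$
whereas the theorem asserts $2\tilde\bJ_{n,k}(\beta)=\tilde S_{n,k}$ (the $-\delta_{n,k}$ on the left is $0$ for $n>k$ and absorbs nothing). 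Since $\tilde\bI_{n,k}(\alpha)>0$, these are incompatible whenever $n-k$ is even and positive. The minimal instance needs no induction at all: for $n=k+2$ the recursion gives $\tilde\bJ_{k+2,k}(\beta)=\tfrac12\binom{k+2}{k}-\tilde\bI_{k+2,k}(\alpha)$, hence $2\tilde\bJ_{k+2,k}(\beta)=\binom{k+2}{k}-2\tilde\bI_{k+2,k}(\alpha)$, while the claimed right-hand side is $\binom{k+2}{k}-\tilde\bI_{k+2,k}(\alpha)$. The same discrepancy is visible in the paper's own beta-case data: $\bJ_{6,2}(\beta)=\tfrac{15}{2}-\bI_{6,2}+\bI_{4,2}\bI_{6,4}-3\bI_{6,4}$ matches the chain sum of Theorem~\ref{theo:formula_J_2} only if $\bI_{6,2}(\alpha)=\bI_{6,4}(\alpha)\bI_{4,2}(\alpha)$, which fails already at $\alpha=1$ (where $\bI_{6,2}=315/128$ but $\bI_{6,4}\bI_{4,2}=\tfrac{35}{8}\cdot\tfrac{15}{8}$).

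Concretely, the discrepancy accumulated over the whole induction is the signed sum over even-step chains running from $n$ all the way down to $k$, and there is no identity forcing it to vanish. What your induction actually proves, verbatim, is the corrected statement in which the terminal weight is $\binom{n_\ell}{k}+\delta_{n_\ell,k}$ --- equivalently, chains that reach level $k$ carry the weight $2\tilde\bJ_{k,k}=2$ rather than $\binom kk=1$:
$$
2\tilde\bJ_{n,k}(\beta)\;=\;\sum_{\ell\ge 0}(-1)^\ell\sum\tilde\bI_{n,n_1}(\alpha)\cdots\tilde\bI_{n_{\ell-1},n_\ell}(\alpha)\Bigl(\binom{n_\ell}{k}+\delta_{n_\ell,k}\Bigr).
$$
When $n-k$ is odd the parity constraint prevents any chain from reaching $k$, the stray term is absent, and your argument is complete as written; in the even case you must either prove this corrected identity or exhibit the extra input that would rescue the quoted one --- the proposal as it stands does neither.
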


\subsection{Relations in matrix form}
Let us write Relation~\eqref{eq:relation_I_J_2}  in the following form:
$$
\sum_{m=k}^n  (-1)^{n-m} \bI_{n,m}(2\beta+n-1)  \bJ_{m,k} \left(\beta+\frac {n-m}2\right) = \delta_{nk},
$$
where $\delta_{nk}$ denotes the Kronecker delta. Introducing the new variable $\gamma:=\beta + (n-1)/2$ that ranges in the interval $[\frac{n-3}{2},+\infty)$, we can write
\begin{equation}\label{eq:relation_I_J_matrix_form}
\sum_{m=k}^n  (-1)^{n-m} \bI_{n,m}(2\gamma)  \bJ_{m,k} \left(\gamma-\frac {m-1}2\right) = \delta_{nk}.
\end{equation}
This relation has the advantage that now the $\bJ$-term does not contain $n$, which allows to state it in matrix form. Take some $N\in\N$, $\gamma\geq \frac{N-3}{2}$, and introduce the $N\times N$-matrices $\bA$ and $\bB$ with the following entries
\begin{align*}
\bA_{n,m}
&:=
\begin{cases}
(-1)^{n} \bI_{n,m}(2\gamma), &\text{ if } 1\leq m \leq n \leq N,\\
0, &\text{ otherwise},
\end{cases}\\
\bB_{m,k}
&:=
\begin{cases}
(-1)^{m} \bJ_{m,k}\left(\gamma - \frac{m-1}{2}\right), &\text{ if } 1\leq k \leq m \leq N,\\
0, &\text{ otherwise}.
\end{cases}
\end{align*}
Note that both $\bA$ and $\bB$ are lower-triangular matrices with $1$'s on the diagonal. Then, Relation~\eqref{eq:relation_I_J_matrix_form} states that $\bA \bB = E$, where $E$ is the $N\times N$-identity matrix. Since this implies that $\bB \bA = E$, we arrive at the following relation which is dual to~\eqref{eq:relation_I_J_matrix_form}:
\begin{equation}\label{eq:relation_I_J_matrix_form_dual}
\sum_{m=k}^n  (-1)^{n-m} \bJ_{n,m}\left(\gamma-\frac {n-1}2\right)  \bI_{m,k} (2\gamma) = \delta_{nk},
\end{equation}
for all $\gamma \geq \frac{N-3}{2}$. Similar arguments apply in the beta' case. Switching back to the original variable $\beta$, we arrive at the following result which is the dual of Proposition~\ref{prop:relations}.
\begin{proposition}
For every $n\in \{2,3,\ldots\}$, $k\in \{1,\ldots,n-1\}$ and every $\beta\geq -1$ we have
\begin{align}
\sum_{\substack{s=0,1,\ldots\\ n-s\geq k}}  (-1)^{s} \bJ_{n,n-s}(\beta)  \bI_{n-s,k} (2\beta+n-1) = 0.
\label{eq:relation_I_J_2_dual}
\end{align}
Similarly, for every $n\in \{2,3,\ldots\}$, $k\in \{1,\ldots,n-1\}$ and for every $\beta>(n-1)/2$, we have
\begin{align}
\sum_{\substack{s=0,1,\ldots\\ n-s\geq k}} (-1)^{s} \tilde \bJ_{n,n-s}(\beta) \tilde \bI_{n-s,k} (2\beta-n+1) =  0.
\label{eq:relation_I_J_2_tilde_dual}
\end{align}
\end{proposition}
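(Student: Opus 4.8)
The plan is to read both dual relations off from the elementary fact that the reverse product of two mutually inverse finite triangular matrices is again the identity. For the beta case everything needed has essentially been assembled in the matrix reformulation preceding the statement; I would only make the final change of variables explicit. Concretely, Proposition~\ref{prop:relations} (which I assume) guarantees that \eqref{eq:relation_I_J_2}, together with the trivial identity $\bI_{n,n}\bJ_{n,n}=1$ handling the diagonal case $k=n$, can be written as \eqref{eq:relation_I_J_matrix_form} after the substitution $\gamma=\beta+\tfrac{n-1}{2}$, which replaces the common parameter $2\beta+n-1$ by $2\gamma$ and the argument $\beta+\tfrac{n-m}{2}$ of $\bJ$ by $\gamma-\tfrac{m-1}{2}$. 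The decisive gain is that $2\gamma$ no longer depends on the summation index $m$, so the left-hand side becomes a genuine matrix product.

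Fix $N\in\N$ and $\gamma\ge\tfrac{N-3}{2}$ and form the $N\times N$ matrices $\bA$ and $\bB$ defined in the text; both are lower triangular with unit diagonal because $\bI_{n,n}(2\gamma)=1$ and $\bJ_{n,n}\equiv1$, and \eqref{eq:relation_I_J_matrix_form} reads exactly $\bA\bB=E$. The one genuinely essential step is the elementary fact that a one-sided inverse of a finite square matrix is automatically two-sided, so $\bA\bB=E$ forces $\bB\bA=E$. Expanding $(\bB\bA)_{n,k}=\sum_{m=k}^n(-1)^n\bJ_{n,m}(\gamma-\tfrac{n-1}{2})(-1)^m\bI_{m,k}(2\gamma)$ and collecting the signs via $(-1)^n(-1)^m=(-1)^{n-m}$ gives \eqref{eq:relation_I_J_matrix_form_dual}. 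Substituting back $\beta=\gamma-\tfrac{n-1}{2}$, so that $\gamma-\tfrac{n-1}{2}=\beta$ and $2\gamma=2\beta+n-1$, and reindexing $m=n-s$ yields $\sum_{s\ge0,\,n-s\ge k}(-1)^s\bJ_{n,n-s}(\beta)\bI_{n-s,k}(2\beta+n-1)=\delta_{nk}$; for the range $k\le n-1$ of the proposition the right-hand side is $0$, which is \eqref{eq:relation_I_J_2_dual}. For a given $\beta\ge-1$ and $n$ the choice $N=n$ is admissible, since then $\gamma=\beta+\tfrac{n-1}{2}\ge\tfrac{n-3}{2}=\tfrac{N-3}{2}$, so all entries stay within the range where Proposition~\ref{prop:relations} applies.

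For the beta' relation I would run the identical argument on \eqref{eq:relation_I_J_2_tilde}, now with the substitution $\gamma=\beta-\tfrac{n-1}{2}$, which sends $2\beta-n+1$ to $2\gamma$ and the $\tilde\bJ$-argument $\beta-\tfrac{n-m}{2}$ to $\gamma+\tfrac{m-1}{2}$. With $\tilde\bA_{n,m}=(-1)^n\tilde\bI_{n,m}(2\gamma)$ and $\tilde\bB_{m,k}=(-1)^m\tilde\bJ_{m,k}(\gamma+\tfrac{m-1}{2})$, again lower triangular with unit diagonal, the admissibility condition $\beta>\tfrac{n-1}{2}$ is precisely $\gamma>0$, under which every entry is well defined; from $\tilde\bA\tilde\bB=E$ one gets $\tilde\bB\tilde\bA=E$, and translating back through $\beta=\gamma+\tfrac{n-1}{2}$ produces \eqref{eq:relation_I_J_2_tilde_dual}. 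There is no genuine analytic difficulty here: the content is entirely the matrix-inversion reformulation, and the only points demanding care are the sign bookkeeping in the factor $(-1)^{n-m}$ and checking that each change of variables keeps every matrix entry within its admissible parameter range simultaneously. The single conceptual ingredient — absorbing the index-dependent parameter shift into the new variable $\gamma$ so that the identity becomes a matrix product, and then transposing the roles of $\bA$ and $\bB$ — is exactly what makes the dual relations fall out for free.
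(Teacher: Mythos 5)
Your proof is correct and is essentially the paper's own argument: the paper derives this proposition in exactly the same way, by rewriting \eqref{eq:relation_I_J_2} (plus the diagonal case $k=n$) as the matrix identity $\bA\bB=E$ after the substitution $\gamma=\beta+\tfrac{n-1}{2}$, concluding $\bB\bA=E$, and translating back (and likewise in the beta' case). The only cosmetic slip, shared with the paper, is that the diagonal entries are $(-1)^n$ rather than $1$; since invertibility of the finite triangular matrices is all that is needed for the one-sided inverse to be two-sided, nothing changes.
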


\subsection{Arithmetic properties of expected internal-angle sums}
At the moment, we do not have a general formula for $\bJ_{n,k}(\beta)$ and $\tilde \bJ_{n,k}(\beta)$ which is ``nicer'' than what is given in Theorems~\ref{theo:formula_J_1},\ref{theo:formula_J_2},\ref{theo:formula_J_1_tilde},\ref{theo:formula_J_2_tilde}. Still, we can say something about the arithmetic properties of these quantities. First we state what we know about $\bI_{n,k}(\alpha)$.

\begin{theorem}\label{theo:I_arithm}
Let $\alpha\geq 0$ be integer, $n\in\N$ and $k\in \{1,\ldots,n\}$.
\begin{itemize}
\item[(a)]
If $\alpha$ is odd, then $\bI_{n,k}(\alpha)$ is rational.
\item[(b)]
If  $\alpha$ is even, then $\bI_{n,k}(\alpha)$ can be expressed in the form $r_0+r_2\pi^{-2} + r_4\pi^{-4} +\ldots + r_{n-k} \pi^{-(n-k)}$ (if $n-k$ is even) or $r_0+r_2\pi^{-2} + r_4\pi^{-4} +\ldots + r_{n-k-1} \pi^{-(n-k-1)}$ (if $n-k$ is odd), where the $r_i$'s are rational numbers.
\end{itemize}
\end{theorem}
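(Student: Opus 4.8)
The plan is to analyze the trigonometric integral $I_{n,k}(\alpha)$ from~\eqref{eq:I_n_k} directly, treating the two parities of $\alpha$ separately. Throughout I would write the inner integral as the distribution function $F(\varphi) := \int_{-\pi/2}^{\varphi} c_{1,\frac{\alpha-1}{2}}(\cos\theta)^{\alpha}\,\dd\theta$, so that $\bI_{n,k}(\alpha) = \binom nk\, c_{1,\frac{\alpha k-1}{2}}\int_{-\pi/2}^{\pi/2}(\cos\varphi)^{\alpha k}F(\varphi)^{n-k}\,\dd\varphi$. The first routine ingredient is the arithmetic nature of the normalizing constants: since $c_{1,\beta} = \Gamma(\beta+\tfrac32)/(\sqrt\pi\,\Gamma(\beta+1))$ and $\Gamma$ at a positive integer is an integer while at a positive half-integer it is a rational multiple of $\sqrt\pi$, one checks that $c_{1,\frac{\alpha-1}{2}}$ and $c_{1,\frac{\alpha k-1}{2}}$ are rational when the relevant exponent ($\alpha$, resp.\ $\alpha k$) is odd, and of the form (rational)$\cdot\pi^{-1}$ when it is even.

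For part (a), with $\alpha$ odd, I would use $(\cos\theta)^{\alpha} = \cos\theta\,(1-\sin^2\theta)^{(\alpha-1)/2}$ with $(\alpha-1)/2\in\mathbb{N}_0$; the substitution $u=\sin\theta$ shows that $F(\varphi)=Q(\sin\varphi)$ for a rational polynomial $Q$ (the normalization $F(\pi/2)=1$ only divides by the rational number $2\int_0^1(1-u^2)^{(\alpha-1)/2}\,\dd u$). Hence $F(\varphi)^{n-k}$ is a rational polynomial in $\sin\varphi$, and $\bI_{n,k}(\alpha)$ becomes a rational combination of the integrals $\int_{-\pi/2}^{\pi/2}(\cos\varphi)^{\alpha k}(\sin\varphi)^{j}\,\dd\varphi$. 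These vanish for odd $j$ and equal a ratio of Gamma functions for even $j$; combining such a ratio with $c_{1,\frac{\alpha k-1}{2}}$ and invoking the Gamma-value dichotomy, I would check (splitting according to the parity of $\alpha k$) that every surviving term is rational, giving rationality of $\bI_{n,k}(\alpha)$.

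For part (b), with $\alpha$ even, the antiderivative of $(\cos\theta)^{\alpha}$ is no longer algebraic in $\sin,\cos$: expanding $(\cos\theta)^{\alpha}$ into its even-frequency, rational-coefficient Fourier cosine polynomial and integrating produces a term linear in $\theta$. Writing $c_{1,\frac{\alpha-1}{2}}=q/\pi$ with $q\in\bQ$, this yields the decomposition $F(\varphi)=r_0+\tfrac1\pi L(\varphi)$, where $r_0\in\bQ$ and $L(\varphi)=a\varphi+\sum_m b_m\sin(2m\varphi)$ is an odd function with rational coefficients and only even frequencies; likewise $c_{1,\frac{\alpha k-1}{2}}=q'/\pi$ with $q'\in\bQ$, and $(\cos\varphi)^{\alpha k}$ is an even-frequency rational cosine polynomial. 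Expanding $F(\varphi)^{n-k}$ binomially gives $\bI_{n,k}(\alpha)=\binom nk\,\tfrac{q'}{\pi}\sum_{\ell=0}^{n-k}\binom{n-k}{\ell}r_0^{\,n-k-\ell}\,\pi^{-\ell}\int_{-\pi/2}^{\pi/2}(\cos\varphi)^{\alpha k}L(\varphi)^{\ell}\,\dd\varphi$.

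Two observations then finish the argument. First, since $L$ is odd and $(\cos\varphi)^{\alpha k}$ is even, the integrand is odd when $\ell$ is odd, so those integrals vanish and only even $\ell$ survive. Second, for even $\ell$ I would expand $L^{\ell}$ and reduce everything to the building blocks $\int_{-\pi/2}^{\pi/2}\varphi^{p}\cos(j\varphi)\,\dd\varphi$ and $\int_{-\pi/2}^{\pi/2}\varphi^{p}\sin(j\varphi)\,\dd\varphi$ with $j$ even; because $j$ is even one has $\sin(j\pi/2)=0$ and $\cos(j\pi/2)=\pm1$, and repeated integration by parts (each step contributing a rational factor $1/j$ and a boundary value $(\pi/2)^{\bullet}$, together with the case $j=0$ giving $\int_{-\pi/2}^{\pi/2}\varphi^{p}\,\dd\varphi$) shows that each building block is a rational combination of odd powers of $\pi$ between $\pi^{1}$ and $\pi^{p+1}$. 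Hence, for even $\ell$, $\int_{-\pi/2}^{\pi/2}(\cos\varphi)^{\alpha k}L^{\ell}\,\dd\varphi$ is a rational combination of $\pi^{1},\pi^{3},\ldots,\pi^{\ell+1}$, and multiplying by $\tfrac{q'}{\pi}\pi^{-\ell}$ turns the $\ell$-th summand into a rational combination of $\pi^{0},\pi^{-2},\ldots,\pi^{-\ell}$. Summing over even $\ell\le n-k$ yields exactly the asserted form, with largest inverse power $\pi^{-(n-k)}$ or $\pi^{-(n-k-1)}$ according to the parity of $n-k$. The main obstacle is precisely this last bookkeeping: it is essential that all frequencies are even (forcing $\sin(j\pi/2)=0$) and that $L$ is odd, since these facts are what eliminate the odd powers of $\pi^{-1}$ and sharpen the naive bound $\pi^{-(n-k+1)}$ down to the stated degree.
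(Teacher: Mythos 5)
Your proposal is correct, and while it follows the same broad strategy as the paper (Fourier/parity analysis of the trigonometric integral defining $\bI_{n,k}(\alpha)$, combined with the Gamma-value dichotomy for $c_{1,\beta}$), it differs in two substantive ways. For part (a) the paper expands $(\cos\theta)^{\alpha}$ into complex exponentials with odd frequencies, evaluates $\int_{-\pi/2}^{\pi/2}\eee^{\ii s\varphi}\,\dd\varphi$ term by term, and in one case must argue that certain purely imaginary contributions cancel because $\bI_{n,k}(\alpha)$ is a priori real; you instead substitute $u=\sin\theta$ to get the inner integral as a rational polynomial in $\sin\varphi$ and reduce everything to Beta-type integrals $\int_{-\pi/2}^{\pi/2}(\cos\varphi)^{\alpha k}(\sin\varphi)^{2m}\,\dd\varphi$, which is more elementary and fully constructive. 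For part (b) the skeleton is the same --- the even-frequency cosine expansion produces the linear term in $\varphi$, and repeated integration by parts on $\int\varphi^{p}\eee^{\ii s\varphi}$ with $s$ even (the paper's Lemma on $T(s,p)$) is exactly your building-block computation --- but where the paper tracks the parities of $p$ and $b$ in $(\ii/\pi)^{b}T(s,p)$ and again discards terms via the ``imaginary parts must cancel'' argument, you isolate the odd function $L$ and kill the odd-$\ell$ terms outright by symmetry of the integrand over $[-\pi/2,\pi/2]$. Your route buys a cleaner, self-contained argument with no appeal to a-priori realness; the paper's complex-exponential bookkeeping is more uniform across the two parities of $\alpha$ and states the key integration-by-parts step as a reusable lemma. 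Both correctly locate the reason the powers of $\pi^{-1}$ are even and stop at $n-k$ (respectively $n-k-1$): the frequencies are even, so the boundary sines vanish, and only even $\ell$ (equivalently, matching parities of $p$ and $b$) contribute.
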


Using the above theorem together with the results of Section~\ref{sec:relations}, we shall prove the following result on the $\bJ_{n,k}(\beta)$'s.
\begin{theorem}
\label{theo:arithm_J}
Let $\beta\geq -1$ be integer or half-integer. Let also $n\in\N$ and $k\in \{1,\ldots,n\}$.
\begin{itemize}
\item[(a)] If $2\beta + n$ is even, then $\bJ_{n,k}(\beta)$ is a rational number.
\item[(b)] If $2\beta + n$ is odd, then $\bJ_{n,k}(\beta)$ can be expressed as  $q_0 + q_2 \pi^{-2} + q_4 \pi^{-4} + \ldots + q_{n-k} \pi^{-(n-k)}$ (if $n-k$ is even) or $q_0 + q_2 \pi^{-2} + q_4 \pi^{-4} + \ldots + q_{n-k} \pi^{-(n-k-1)}$ (if $n-k$ is odd),
     where the $q_{i}$'s are rational numbers.
\end{itemize}
\end{theorem}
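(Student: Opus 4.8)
My plan is to deduce the statement directly by feeding the arithmetic information about external-angle sums from Theorem~\ref{theo:I_arithm} into the explicit expansion of $\bJ_{n,k}(\beta)$ provided by Theorem~\ref{theo:formula_J_1}. The decisive structural feature I would exploit is that \emph{every} factor $\bI_{\,\cdot\,,\,\cdot}$ appearing in that expansion carries one and the same parameter $\alpha := 2\beta + n - 1$. Since $\beta$ is an integer or a half-integer, $2\beta \in \Z$, so $\alpha \in \Z$; and because $\alpha$ and $2\beta+n$ differ by $1$, the hypothesis ``$2\beta+n$ even'' of part~(a) is equivalent to ``$\alpha$ odd'', while ``$2\beta+n$ odd'' of part~(b) is equivalent to ``$\alpha$ even''. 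I would then verify applicability of Theorem~\ref{theo:I_arithm}: a factor $\bI_{n_{i-1},n_i}(\alpha)$ fails to be automatically $1$ only when $n_i\ge 2$, which forces the first index, hence $n$, to be at least $3$; and for $n\ge 3$ the bound $\beta\ge -1$ gives $\alpha = 2\beta+n-1\ge n-3\ge 0$. The remaining cases $n\le 2$ are settled outright by $\bJ_{n,n}(\beta)=1$ and $\bJ_{n,n-1}(\beta)=n/2$. Thus Theorem~\ref{theo:I_arithm} covers every nontrivial factor.

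For part~(a), $\alpha$ is a nonnegative odd integer, so by Theorem~\ref{theo:I_arithm}(a) each factor $\bI_{n_{i-1},n_i}(\alpha)$ is rational. Every product along a chain $n=n_0>\dots>n_\ell\ge k$ is then rational, the weights $(-1)^\ell\binom{n_\ell}{k}$ are rational, and a finite sum of rationals is rational; hence $\bJ_{n,k}(\beta)\in\bQ$.

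For part~(b), $\alpha$ is a nonnegative even integer, and Theorem~\ref{theo:I_arithm}(b) writes each factor $\bI_{n_{i-1},n_i}(\alpha)$ as a rational polynomial in $\pi^{-1}$ that contains only even powers and has degree in $\pi^{-1}$ at most $n_{i-1}-n_i$. Multiplying the factors of a single chain, the product is again a rational polynomial in $\pi^{-2}$ whose degree in $\pi^{-1}$ is at most the telescoping sum $\sum_{i=1}^{\ell}(n_{i-1}-n_i)=n-n_\ell\le n-k$. Summing over all chains and absorbing the rational weights, $\bJ_{n,k}(\beta)$ becomes a rational polynomial in $\pi^{-2}$ of degree at most $n-k$ in $\pi^{-1}$. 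Since only even powers of $\pi^{-1}$ survive, the top power is $\pi^{-(n-k)}$ when $n-k$ is even and $\pi^{-(n-k-1)}$ when $n-k$ is odd, which is precisely the asserted form.

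The whole argument is a propagation of two invariants, rationality of coefficients and the parity/degree of the surviving powers of $\pi^{-1}$, through finitely many products and sums; no analytic input beyond Theorems~\ref{theo:formula_J_1} and~\ref{theo:I_arithm} is required. The only point I expect to require genuine care, and thus the mildest possible obstacle, is the degree bookkeeping in part~(b): one must confirm that the per-factor estimate $\deg_{\pi^{-1}}\bI_{n_{i-1},n_i}(\alpha)\le n_{i-1}-n_i$ telescopes correctly to $n-n_\ell\le n-k$ and that the ``even powers only'' property then trims an odd top degree down to $n-k-1$, yielding exactly the stated truncation.
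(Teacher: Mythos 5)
Your proof is correct and follows essentially the same route as the paper's: the paper inducts on $n$ via the recursions \eqref{eq:alg_J_1} and \eqref{eq:alg_J_2}, feeding in exactly the same arithmetic input (Propositions~\ref{prop:I_rational} and~\ref{prop:I_polynomial_in_pi}, i.e.\ Theorem~\ref{theo:I_arithm}), while you use the unrolled closed form of Theorem~\ref{theo:formula_J_1} and telescope the degree bounds along each chain. The parity and degree bookkeeping in part~(b) checks out, so there is nothing to add.
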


Symbolic computations we performed with the help of Mathematica~11 strongly suggest that in the case when $n-k$ is odd, Part~(b) can be strengthened as follows:
\begin{conjecture}\label{conj:J_arithm}
If both $2\beta + n$  and $n-k$ are odd, then $\bJ_{n,k}(\beta)$ is a number of the form $q\pi^{-(n-k-1)}$ with some rational $q$.
\end{conjecture}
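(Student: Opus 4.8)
The plan is to recast the conjecture as a sharp order-of-vanishing statement for a polynomial in $\pi^{-2}$. Since $2\beta+n$ is odd, the parameter $2\beta+n-1$ is even, so by Theorem~\ref{theo:I_arithm}(b) every quantity $\bI_{n,m}(2\beta+n-1)$ that occurs below lies in $\mathbb{Q}[\pi^{-2}]$; moreover all internal-angle sums entering the recursion carry the same parity $2\beta+n$, so by Theorem~\ref{theo:arithm_J}(b) they too are polynomials in $x:=\pi^{-2}$ with rational coefficients. Writing $\bJ_{n,k}(\beta)=P_{n,k}(x)$, Theorem~\ref{theo:arithm_J}(b) (with $n-k$ odd) already gives $\deg_x P_{n,k}\le (n-k-1)/2$. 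Hence the conjecture is \emph{equivalent} to the assertion that $P_{n,k}$ is a monomial of top degree, i.e.
\[
\operatorname{ord}_{x=0}\bJ_{n,k}(\beta)\ \ge\ \frac{n-k-1}{2}.
\]
Everything thus reduces to this $x$-valuation lower bound, the matching upper bound being supplied by Theorem~\ref{theo:arithm_J}.

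I would prove the valuation bound by induction on the odd number $n-k$, using the efficient recursion~\eqref{eq:alg_J_2}. The base case $n-k=1$ is the known identity $\bJ_{n,n-1}(\beta)=n/2$, a pure $x^0$ term. For the inductive step put $S:=(n-k-1)/2$ and observe that each summand $\bJ_{n-2s,k}(\beta+s)$ has odd codimension $(n-2s)-k$ and parity $2(\beta+s)+(n-2s)=2\beta+n$, so by the inductive hypothesis it is a pure monomial $c_s\,x^{S-s}$ with $c_s\in\mathbb{Q}$; in particular $c_S=\bJ_{k+1,k}(\cdots)=(k+1)/2$. Since the codimension $2s$ is even, write $\bI_{n,n-2s}(2\beta+n-1)=\sum_{j=0}^{s}a_{s,j}x^{j}$. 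Then~\eqref{eq:alg_J_2} yields, for $0\le p\le S$,
\[
[x^p]\,\bJ_{n,k}(\beta)=\tfrac12\binom nk\,\delta_{p,0}-\sum_{s=\max(1,\,S-p)}^{S} c_s\,a_{s,\,p-S+s},
\]
and the required bound is precisely the vanishing of this coefficient for all $p<S$.

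With the substitution $u:=S-p$ the plan comes down to proving the finite family of rational identities
\[
\sum_{s=u}^{S} c_s\,a_{s,\,s-u}=\tfrac12\binom nk\,\delta_{u,S},\qquad u=1,\ldots,S.
\]
The extreme case $u=S$ is the single cleanest instance: it demands $c_S\,a_{S,0}=\tfrac12\binom nk$, i.e.
$\bI^{\mathrm{rat}}_{n,k+1}=\tfrac{1}{k+1}\binom nk$ for the even-codimension constant term of $\bI_{n,k+1}(2\beta+n-1)$. I would verify this first, both as the simplest target and as a consistency check — it already forces the correct constant term in the case $n=4$, $k=1$, where it gives $\bI^{\mathrm{rat}}_{4,2}=2$. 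The remaining identities ($u<S$) involve the deeper coefficients $a_{s,j}$ with $j\ge 1$.

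The main obstacle is gaining enough control over these lower coefficients $a_{s,j}$, since Theorem~\ref{theo:I_arithm} records only degrees, not the individual $\pi$-coefficients, and the naive estimate $\operatorname{ord}_x\!\big(\bI_{n,n-2s}\bJ_{n-2s,k}\big)\ge S-s$ is too weak for $s\ge1$: genuine cancellation across neighbouring $s$ is essential. I would extract the $a_{s,j}$ from the trigonometric integral~\eqref{eq:I_n_k}: for even $\alpha=2\beta+n-1$ expand $(\cos\theta)^{\alpha}$ as a Fourier cosine polynomial, perform the inner integral (producing one term linear in $\theta$, which carries the powers of $\pi$, plus a bounded trigonometric remainder), raise to the power $n-k$, and integrate in $\varphi$, the constants $c_{1,\cdot}$ contributing the remaining $\pi$-factors. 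This gives a combinatorial expression for $a_{s,j}$, after which I expect the convolutions $\sum_s c_s a_{s,s-u}$ to collapse either by a direct generating-function computation or, more conceptually, by feeding them into the dual angle-sum relation~\eqref{eq:relation_I_J_2_dual}, which should make the sums telescope. An alternative route, bypassing the recursion, is to analyse the $x$-valuation of each monomial in the closed formula of Theorem~\ref{theo:formula_J_2} directly and to construct a sign-reversing involution on the admissible tuples $(n_0,\ldots,n_\ell)$ pairing equal low-order contributions of opposite sign $(-1)^\ell$; establishing these cancellations — controlling several of the lowest $\pi$-coefficients of $\bI_{n,m}$ at once — is the crux of the problem.
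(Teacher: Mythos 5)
This statement is an open conjecture in the paper, not a theorem: the author explicitly writes that the observed cancellation (e.g.\ for $\bJ_{7,2}(-1)$) could not be explained using Theorems~\ref{theo:formula_J_1} and~\ref{theo:formula_J_2}, and no proof is offered. So there is no paper proof to compare against, and the question is only whether your argument closes the gap. It does not. Your reduction is sound and, as far as I can check, correctly set up: with $x=\pi^{-2}$, Theorems~\ref{theo:I_arithm}(b) and~\ref{theo:arithm_J}(b) give the degree bound $\deg_x \bJ_{n,k}(\beta)\le (n-k-1)/2=:S$, the recursion~\eqref{eq:alg_J_2} together with the induction hypothesis turns the conjecture into the vanishing of $[x^p]\bJ_{n,k}(\beta)$ for $p<S$, and the base case $\bJ_{n,n-1}(\beta)=n/2$ and the top instance $c_S a_{S,0}=\tfrac12\binom nk$ (checked against $\bI_{4,2}$) are consistent. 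But the resulting family of identities
\[
\sum_{s=u}^{S} c_s\,a_{s,\,s-u}=\tfrac12\binom nk\,\delta_{u,S},\qquad u=1,\ldots,S,
\]
is exactly as hard as the original conjecture and is nowhere established. Nothing in the paper controls the individual coefficients $a_{s,j}$ of $\bI_{n,n-2s}(2\beta+n-1)$ beyond their rationality and the degree bound; the dual relation~\eqref{eq:relation_I_J_2_dual} mixes $\bI$'s and $\bJ$'s at shifted parameters and does not obviously telescope into these convolutions; and the proposed sign-reversing involution on the tuples in Theorem~\ref{theo:formula_J_2} is not constructed. You acknowledge this yourself ("the crux of the problem"), so what you have is a plausible reformulation plus a research plan, not a proof.

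If you want to push this further, the most promising concrete target is your $u=S$ identity, which is a clean standalone claim about the constant term (in $x$) of $\bI_{n,k+1}(\alpha)$ for even $\alpha$: proving $a_{S,0}=\frac{1}{k+1}\binom nk$ directly from the integral~\eqref{eq:I_n_k} would be a genuine partial result (it pins down $[x^0]$) and would test whether the Fourier-expansion machinery you describe can actually be carried out. But be aware that even that single instance is not proved in the paper, and the identities for $u<S$ require simultaneous control of several low-order coefficients across different $\bI_{n,m}$'s, which is precisely the cancellation the author reports being unable to explain.
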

Conjecture~\ref{conj:J_arithm} states that $\bJ_{n,k}(\beta)$ has sometimes much simpler form than the one suggested by Theorems~\ref{theo:formula_J_1} and~\ref{theo:formula_J_2}.  For example, when computing $\bJ_{7,2}(-1)$, we can use the formula
$$
\bJ_{7,2}(-1) = \frac 12 \binom 72 - \bI_{7,5}(4) \bJ_{5,2}(0) - \bI_{7,3}(4) \bJ_{3,2}(1)
$$
that follows from~\eqref{eq:alg_J_2}.
The involved values are given by $\bJ_{5,2}(0) = \frac{1692197}{282240 \pi ^2}$, $\bJ_{3,2}(1) = \frac 32$ and
$$
\bI_{7,5}(4) = 7-\frac{2144238917}{190270080 \pi ^2},
\quad
\quad
\bI_{7,3}(4) = 7 + \frac{1250163908136617}{30981823488000 \pi ^4}-\frac{1692197}{60480 \pi ^2},
$$
so that, a priori, we expect $\bJ_{7,2}(-1)$ to be a linear combination of $1, \pi^{-2}, \pi^{-4}$ over $\bQ$. A posteriori, it turns out that $\bJ_{7,2}(-1)=\frac{113537407}{16128000 \pi ^4}$ is a rational multiple of $\pi^{-4}$, while the remaining terms cancel. We were not able to explain this strange cancellation using Theorems~\ref{theo:formula_J_1} and~\ref{theo:formula_J_2}. It is therefore natural to conjecture that there is a ``nicer'' formula for $\bJ_{n,k}(\beta)$ than the ones given in these theorems.

The results for the quantities $\tilde \bI_{n,k}(\alpha)$ and $\tilde \bJ_{n,k}(\beta)$ are analogous. We state them without proofs.
\begin{theorem}\label{theo:I_arithm_tilde}
Let $\alpha>0$ be integer, $n\in\N$ and $k\in \{1,\ldots,n\}$.
\begin{itemize}
\item[(a)]
If $\alpha$ is even, then $\tilde \bI_{n,k}(\alpha)$ is rational.
\item[(b)]
If  $\alpha$ is odd, then $\tilde \bI_{n,k}(\alpha)$ can be expressed in the form $r_0+r_2\pi^{-2} + r_4\pi^{-4} +\ldots + r_{n-k} \pi^{-(n-k)}$ (if $n-k$ is even) or $r_0+r_2\pi^{-2} + r_4\pi^{-4} +\ldots + r_{n-k-1} \pi^{-(n-k-1)}$ (if $n-k$ is odd), where the $r_i$'s are rational numbers.
\end{itemize}
\end{theorem}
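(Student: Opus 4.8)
The plan is to evaluate the defining integral~\eqref{eq:I_n_k_tilde} directly and to track the arithmetic nature of each ingredient, the argument running in parallel to that for Theorem~\ref{theo:I_arithm} but with the roles of even and odd $\alpha$ interchanged (because the cosine exponents $\alpha k-1$ and $\alpha-1$ appearing here are one less than the exponents $\alpha k$ and $\alpha$ in~\eqref{eq:I_n_k}). First I would record the arithmetic type of the two normalizing constants. Using $\tilde c_{1,s}=\Gamma(s)/(\sqrt\pi\,\Gamma(s-\tfrac12))$ together with the half-integer values of the Gamma function, one checks that $\tilde c_{1,(t+2)/2}$ is rational when the exponent $t$ is odd and is a rational multiple of $\pi^{-1}$ when $t$ is even. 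Applying this to the inner exponent $t=\alpha-1$ and the outer exponent $t=\alpha k-1$ pins down exactly where a factor $\pi^{-1}$ enters, and already explains the even/odd-$\alpha$ dichotomy of the theorem.

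For part~(a), $\alpha$ is even, so both exponents $\alpha-1$ and $\alpha k-1$ are odd and both constants are rational. The inner antiderivative $\int_{-\pi/2}^{\varphi}(\cos\theta)^{\alpha-1}\,\dd\theta$ becomes, after the substitution $u=\sin\theta$, a polynomial in $\sin\varphi$ with rational coefficients, and no $\pi$ appears; its $(n-k)$-th power is again such a polynomial. Since the outer factor $(\cos\varphi)^{\alpha k-1}$ carries an odd power of the cosine, the substitution $u=\sin\varphi$ turns the whole outer integral into $\int_{-1}^{1}(\text{rational polynomial in }u)\,\dd u$, a rational number. Multiplying by the two rational constants and by $\binom nk$ shows $\tilde\bI_{n,k}(\alpha)\in\bQ$.

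For part~(b), $\alpha$ is odd, the inner exponent $\alpha-1$ is even, and $\tilde c_{1,(\alpha+1)/2}$ is a rational multiple of $\pi^{-1}$. Expanding $(\cos\theta)^{\alpha-1}$ into its Fourier form and integrating from $-\pi/2$ (using $\sin(\text{even}\cdot\pi/2)=0$) puts the inner integral into the shape
\[
G(\varphi)=c_0+\pi^{-1}\bigl(e\varphi+\sigma(\varphi)\bigr),
\]
with $c_0,e\in\bQ$ and $\sigma$ a rational-coefficient sum of sines of even multiples of $\varphi$. Binomially expanding $G(\varphi)^{n-k}$, the term that uses $a$ of the $\pi^{-1}$-factors carries $\pi^{-a}$ times a product of $a$ factors each equal to $e\varphi+\sigma(\varphi)$; selecting the monomial $\varphi$ from $b$ of them and a sine from the remaining $\nu=a-b$ of them, the outer integral reduces to evaluating
\[
\int_{-\pi/2}^{\pi/2}(\cos\varphi)^{\alpha k-1}\,\varphi^{b}\,\prod_{i=1}^{\nu}\sin(2r_i\varphi)\,\dd\varphi .
\]

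The crux is to read off the power of $\pi$ from such an integral. Writing $(\cos\varphi)^{\alpha k-1}$ and the sine product in Fourier form reduces everything to the elementary moments $\int_{-\pi/2}^{\pi/2}\varphi^{b}\cos(j\varphi)\,\dd\varphi$ and $\int_{-\pi/2}^{\pi/2}\varphi^{b}\sin(j\varphi)\,\dd\varphi$, which one resolves by repeated integration by parts. The main obstacle, and the heart of the argument, is the parity bookkeeping guaranteeing that only even powers of $\pi^{-1}$ survive and that no positive power of $\pi$ remains. Three facts combine: (i) the integrand is odd unless $b+\nu$ is even, and since $b+\nu=a$ this forces $a$ to be even in every surviving term; (ii) when $\alpha k-1$ is odd the only nonvanishing frequencies $j$ are odd, so the boundary values involve $\cos(\text{odd}\cdot\pi/2)=0$ and only even powers of $\pi$ are produced, whereas when $\alpha k-1$ is even the frequencies are even, the boundary values involve $\sin(\text{even}\cdot\pi/2)=0$, and only odd powers of $\pi$ are produced; (iii) the outer constant contributes a factor $\pi^{-1}$ precisely in the second case, restoring the parity. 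Combined with the prefactor $\pi^{-a}$ and the bound $b\le a$, these show that every contribution is a rational multiple of $\pi^{-2j}$ with $0\le 2j\le a\le n-k$, which yields the announced representation and the cutoff $\pi^{-(n-k)}$ or $\pi^{-(n-k-1)}$ according to the parity of $n-k$. A clean way to package (ii)--(iii) is to verify once and for all that repeated integration by parts keeps these trigonometric moments inside the $\bQ$-span of the stated powers of $\pi$; this is the only computational step, and it is entirely parallel to the corresponding step in the proof of Theorem~\ref{theo:I_arithm}.
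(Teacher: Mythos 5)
Your proposal is correct and follows essentially the route the paper intends (it omits this proof as ``analogous'' to that of Theorem~\ref{theo:I_arithm}): classify the arithmetic of the constants $\tilde c_{1,\cdot}$ via the Gamma function, Fourier-expand the cosine powers, and control the resulting moments by integration by parts with parity bookkeeping, exactly mirroring Lemmas~\ref{lem:c_arithm}--\ref{lem:term} and Propositions~\ref{prop:I_rational}--\ref{prop:I_polynomial_in_pi}. The only notable deviation is that in part~(a) your substitution $u=\sin\theta$ (exploiting that both exponents $\alpha-1$ and $\alpha k-1$ are odd) reduces everything to a rational polynomial integral, which is a little more elementary than the paper's complex-exponential device with the ``imaginary parts must cancel'' argument, and your real-variable parity analysis in part~(b) is an equivalent repackaging of Lemma~\ref{lem:term}.
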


\begin{theorem}
\label{theo:arithm_J_tilde}
Let $n\in\N$ and  $k\in \{1,\ldots,n\}$. Let also $\beta > (n-1)/2$ be integer or half-integer.
\begin{itemize}
\item[(a)] If $2\beta - n$ is odd, then $\tilde \bJ_{n,k}(\beta)$ is a rational number.
\item[(b)] If $2\beta - n$ is even, then $\tilde \bJ_{n,k}(\beta)$ can be expressed as
$q_0 + q_2 \pi^{-2} + q_4 \pi^{-4} + \ldots + q_{n-k-1} \pi^{-(n-k-1)}$ (if $n-k$ is odd) or
$q_0 + q_2 \pi^{-2} + q_4 \pi^{-4} + \ldots + q_{n-k} \pi^{-(n-k)}$ (if $n-k$ is even), where the $q_i$'s are rational numbers.
\end{itemize}
\end{theorem}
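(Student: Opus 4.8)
The plan is to derive Theorem~\ref{theo:arithm_J_tilde} from the explicit expansion of $\tilde\bJ_{n,k}(\beta)$ in Theorem~\ref{theo:formula_J_1_tilde} (equivalently Theorem~\ref{theo:formula_J_2_tilde}) combined with the arithmetic information on the external-angle sums supplied by Theorem~\ref{theo:I_arithm_tilde}. This is the verbatim analogue of the (unstated) proof of the beta case in Theorem~\ref{theo:arithm_J}, with the roles of ``even'' and ``odd'' interchanged, the interchange being forced by the fact that Theorem~\ref{theo:I_arithm_tilde} assigns rationality to \emph{even} $\alpha$ (whereas Theorem~\ref{theo:I_arithm} does so for odd $\alpha$), together with the different shift $\alpha = 2\beta - n + 1$ appearing in the beta' formula.

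First I would set $\alpha := 2\beta - n + 1$, the common argument of every $\tilde\bI$-factor occurring in Theorem~\ref{theo:formula_J_1_tilde}. Since $\beta$ is an integer or half-integer we have $2\beta\in\Z$, hence $\alpha\in\Z$, and since $\beta > (n-1)/2$ we have $\alpha = 2\beta - (n-1) > 0$; thus $\alpha$ is a positive integer and the hypotheses of Theorem~\ref{theo:I_arithm_tilde} are met by every factor. The parity translation is immediate:
\[
2\beta - n \text{ odd} \iff \alpha \text{ even},
\qquad
2\beta - n \text{ even} \iff \alpha \text{ odd}.
\]
By Theorem~\ref{theo:formula_J_1_tilde}, $\tilde\bJ_{n,k}(\beta)$ is a $\Z$-linear combination (the coefficients being $(-1)^\ell\binom{n_\ell}{k}$) of products
\[
\tilde\bI_{n, n_1}(\alpha)\,\tilde\bI_{n_1, n_2}(\alpha)\cdots \tilde\bI_{n_{\ell-1}, n_\ell}(\alpha),
\qquad
n = n_0 > n_1 > \cdots > n_\ell \geq k.
\]
For part (a), $2\beta - n$ odd means $\alpha$ is even, so by Theorem~\ref{theo:I_arithm_tilde}(a) every factor is rational; hence the whole combination is rational. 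For part (b), $2\beta - n$ even means $\alpha$ is odd, so by Theorem~\ref{theo:I_arithm_tilde}(b) each factor $\tilde\bI_{n_{i-1},n_i}(\alpha)$ is a polynomial in $\pi^{-2}$ with rational coefficients whose largest power of $\pi^{-1}$ is at most $n_{i-1} - n_i$.

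The only point requiring care is the degree bookkeeping in part (b). A product of $\bQ$-polynomials in $\pi^{-2}$ is again a $\bQ$-polynomial in $\pi^{-2}$, and its $\pi^{-1}$-degree is the sum of the individual degrees; since these telescope,
\[
\sum_{i=1}^{\ell} (n_{i-1} - n_i) = n - n_\ell \leq n - k,
\]
the top power of $\pi^{-1}$ in each product, and hence in the full sum $\tilde\bJ_{n,k}(\beta)$, is an even integer at most $n-k$. The largest such even integer is $n-k$ when $n-k$ is even and $n-k-1$ when $n-k$ is odd, which is exactly the form $q_0 + q_2\pi^{-2} + q_4\pi^{-4}+\cdots$ asserted in the two cases. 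I expect no genuine obstacle beyond verifying that only even powers of $\pi^{-1}$ survive (clear, since each factor is a polynomial in $\pi^{-2}$) and that summing over the admissible descending chains $n = n_0 > \cdots > n_\ell \ge k$ cannot raise this maximal exponent; both facts are immediate from Theorem~\ref{theo:I_arithm_tilde}(b) and the telescoping identity above.
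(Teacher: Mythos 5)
Your proof is correct and is essentially the argument the paper intends: the paper omits the proof of Theorem~\ref{theo:arithm_J_tilde} as being analogous to that of Theorem~\ref{theo:arithm_J}, which proceeds by induction on $n$ via the recursions~\eqref{eq:alg_J_1} and~\eqref{eq:alg_J_2} combined with the arithmetic of the external-angle sums, and your use of the already-unrolled expansion of Theorem~\ref{theo:formula_J_1_tilde} is the same argument with the induction pre-packaged. The parity translation $2\beta-n$ odd $\iff \alpha=2\beta-n+1$ even, the telescoping degree bound $\sum_i(n_{i-1}-n_i)=n-n_\ell\le n-k$, and the observation that products of $\bQ$-polynomials in $\pi^{-2}$ contain only even powers of $\pi^{-1}$ are exactly the points the paper's omitted proof would rest on.
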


In the case when $k$ is even, our symbolic computations suggest the following stronger version of Part~(b):
\begin{conjecture}\label{conj:J_tilde_arithm}
If both $2\beta - n$ and $k$ are even, then $\tilde \bJ_{n,k}(\beta)$ is a number of the form $q\pi^{-(n-k)}$ (if $n-k$ is even) or $q\pi^{-(n-k-1)}$ (if $n-k$ is odd) with some rational $q$.
\end{conjecture}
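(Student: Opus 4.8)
The plan is to prove Conjecture~\ref{conj:J_tilde_arithm} by induction on $n$, reducing it to a vanishing statement for the sub-leading powers of $\pi$. Write $\alpha := 2\beta - n + 1$. Under the hypothesis that $2\beta - n$ is even, the parameter $\alpha$ is odd, so Theorem~\ref{theo:arithm_J_tilde}(b) already guarantees that $\tilde\bJ_{n,k}(\beta)$ is a $\bQ$-linear combination of $1, \pi^{-2}, \pi^{-4}, \ldots$ up to the top power $\pi^{-(n-k)}$ (if $n-k$ is even) or $\pi^{-(n-k-1)}$ (if $n-k$ is odd). The content of the conjecture is therefore exactly that, once $k$ is additionally even, every coefficient below the top one vanishes. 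First I would record that all three relevant parities are preserved under the recursion step $(n,\beta)\mapsto(n-2s,\beta-s)$: the oddness of $\alpha$, since $2(\beta-s)-(n-2s)=2\beta-n$; the evenness of $k$, which is untouched; and the parity of $n-k$, since $(n-2s)-k\equiv n-k\pmod 2$.

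Next I would feed this into the efficient recursion
\[
\tilde\bJ_{n,k}(\beta) = \tfrac12\binom nk - \sum_{s=1}^{\lfloor (n-k)/2\rfloor}\tilde\bI_{n,n-2s}(\alpha)\,\tilde\bJ_{n-2s,k}(\beta - s)
\]
and carry out the degree bookkeeping. For each summand the factor $\tilde\bI_{n,n-2s}(\alpha)$ has codimension $n-(n-2s)=2s$, which is even, so by Theorem~\ref{theo:I_arithm_tilde}(b) it is a polynomial in $\pi^{-2}$ of degree $s$, with top term a rational multiple of $\pi^{-2s}$; and by the induction hypothesis $\tilde\bJ_{n-2s,k}(\beta-s)$ is a pure power $q\,\pi^{-((n-2s)-k)}$ or $q\,\pi^{-((n-2s)-k-1)}$ (the beta$'$ constraint $\beta-s>(n-2s-1)/2$ holds, so the term is well-defined). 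Multiplying, each product contributes its top term exactly at $\pi^{-(n-k)}$ or $\pi^{-(n-k-1)}$, which confirms the leading coefficient automatically. The difficulty is that each product also contributes \emph{all} the lower powers coming from the sub-leading coefficients of $\tilde\bI_{n,n-2s}(\alpha)$, and that the standalone term $\tfrac12\binom nk$ sits at $\pi^{0}$. For the conjecture to hold, every one of these sub-leading contributions, including the rational constant, must cancel.

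This cancellation is the main obstacle, and it is precisely the phenomenon the authors flag for the companion beta-case Conjecture~\ref{conj:J_arithm}. The induction propagates the leading term but gives no leverage on the lower ones, so a genuinely new ingredient is needed. I see three avenues. (i) Extract, from the closed formula of Theorem~\ref{theo:formula_J_2_tilde}, the coefficient of each fixed $\pi^{-2j}$ below the top as an alternating sum over chains $n=n_0>n_1>\cdots>n_\ell\geq k$ carrying a prescribed total degree deficit, and prove it vanishes as a combinatorial identity in which the evenness of $k$ enters through the binomial $\binom{n_\ell}{k}$. (ii) Encode the even-$k$ constraint as an honest symmetry: assemble the $\tilde\bJ_{n,k}(\beta)$ into a generating function in an auxiliary variable $t$ and hope that evenness of $k$ corresponds to invariance under $t\mapsto -t$, forcing the offending powers of $\pi$ to drop out. (iii) Most promising but hardest: find a direct integral representation of $\tilde\bJ_{n,k}(\beta)$ for even $k$ as an $(n-k)$- or $(n-k-1)$-fold product-type integral that is manifestly a single rational multiple of one power of $\pi$, bypassing the cancellation entirely. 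My expectation is that (iii) is the correct conceptual route, since the appearance of exactly one power of $\pi$ strongly suggests a hidden factorization of the underlying solid-angle integral when the face dimension $k-1$ is odd; but establishing such a representation is exactly the step that currently resists proof, which is why the statement remains a conjecture.
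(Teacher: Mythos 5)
The statement you were asked about is labelled a \emph{conjecture} in the paper, and the paper contains no proof of it: the author's only evidence is symbolic computation (Mathematica~11) in low dimensions, together with the analogous beta-case observation that, e.g., $\bJ_{7,2}(-1)$ collapses to a single rational multiple of $\pi^{-4}$ by a cancellation the author explicitly says he ``was not able to explain'' using Theorems~\ref{theo:formula_J_1} and~\ref{theo:formula_J_2}. So there is no proof to compare yours against, and your proposal --- which does not claim to prove the statement --- reaches the correct conclusion that it remains open.

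Your partial analysis is sound and matches what the paper's machinery actually delivers. The parity bookkeeping is right: $\alpha=2\beta-n+1$ is odd exactly when $2\beta-n$ is even, this parity is preserved under $(n,\beta)\mapsto(n-2s,\beta-s)$ since $2(\beta-s)-(n-2s)=2\beta-n$, the beta$'$ admissibility $\beta-s>(n-2s-1)/2$ is equivalent to $\beta>(n-1)/2$, and Theorem~\ref{theo:I_arithm_tilde}(b) makes $\tilde\bI_{n,n-2s}(\alpha)$ a $\bQ$-polynomial in $\pi^{-2}$ of top degree $\pi^{-2s}$, so the recursion propagates the \emph{leading} coefficient but says nothing about the sub-leading ones or the rational constant $\tfrac12\binom nk$. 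That is precisely the gap, and it is the same gap the paper leaves open; your three suggested escape routes (a combinatorial identity for the coefficient of each fixed $\pi^{-2j}$ in Theorem~\ref{theo:formula_J_2_tilde}, a generating-function symmetry tied to the evenness of $k$, or a direct integral representation yielding a single power of $\pi$) are reasonable directions but none is carried out here or in the paper. In short: you have not proved the statement, but neither has the paper, and your diagnosis of the obstruction is accurate.
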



\section{Special cases and applications}\label{sec:applications}
In this section we present several special cases of the above results and their applications to some problems of stochastic geometry. The symbolic computations were performed using Mathematica~11. For the vector of the expected internal angles we use the  notation
$$
\bJ_{n,\bullet} (\beta) = (\bJ_{n,1}(\beta), \ldots, \bJ_{n,n}(\beta)).
$$

\subsection{Internal angles of random simplices: Uniform distribution on the sphere}
Let $X_1,\ldots,X_n$ be i.i.d.\ random points sampled uniformly from the unit sphere $\bS^{n-2} \subset \R^{n-1}$. Recall that the expected sum of internal angles of the simplex $[X_1,\ldots,X_n]$ at its $k$-vertex faces is denoted by $\bJ_{n,k}(-1)$. Clearly, we have the trivial results
\begin{equation}\label{eq:J_trivial}
\bJ_{1,\bullet}(-1) = (1),
\qquad
\bJ_{2,\bullet}(-1) = (1,1),
\qquad
\bJ_{3,\bullet}(-1) = \Bigg(\frac 12,\frac 32, 1\Bigg).
\end{equation}
The first two non-trivial cases, $n=3$ and $n=4$ (corresponding to simplices in dimensions $3$ and $4$), were  treated in~\cite{kabluchko_angles}:
\begin{equation}
\bJ_{4,\bullet}(-1) = \Bigg(\frac 18,\frac 98,2,1\Bigg),
\qquad
\bJ_{5,\bullet}(-1) = \Bigg(-\frac 16 + \frac{539}{288 \pi^2},\frac{539}{96 \pi^2},\frac 53 + \frac{539}{144 \pi^2},\frac 52,1\Bigg).
\end{equation}
The method used there did not allow for an extension to higher dimensions. Using Mathematica~11 and the algorithm described in Section~\ref{sec:relations} we recovered these results and, moreover, obtained the following
\begin{theorem}
We have
{\tiny{
\begin{flalign*}
\bJ_{6,\bullet}(-1) &= \Bigg(\frac{25411}{7340032},\frac{233445}{1048576},\frac{5155}{3584},\frac{23075}{7168},3,1\Bigg),&\\
\bJ_{7,\bullet}(-1) &= \Bigg(\frac{1}{6}+\frac{113537407}{48384000 \pi ^4}-\frac{2144238917}{1141620480 \pi ^2},\frac{113537407}{16128000 \pi ^4},-\frac{7}{6}+\frac{113537407}{24192000 \pi ^4}+\frac{2144238917}{114162048 \pi ^2},\frac{2144238917}{76108032 \pi ^2},&\\
&\phantom{=\Bigg(}  \frac{7}{2}+\frac{2144238917}{190270080 \pi ^2},\frac{7}{2},1\Bigg),&\\
\bJ_{8,\bullet}(-1) &= \Bigg(
\frac{76136856565967}{1454662679640670208},
\frac{29503701837953231}{1454662679640670208},
\frac{5899486844923}{16647293239296},
\frac{1146031403475}{584115552256},
\frac{418431615}{84672512},
\frac{1603846783}{254017536},
4, 1\Bigg),&\\
\bJ_{9,\bullet}(-1) &= \Bigg(-\frac{3}{10}-\frac{1581133359667623075371927}{218521780048552780800000 \pi ^4}+\frac{2819369438967901759}{1739761680384000000 \pi ^6}+\frac{3585828150520517221}{975094112225376000 \pi ^2},&\\
&\phantom{=\Bigg(} \frac{2819369438967901759}{579920560128000000 \pi ^6},\frac{1581133359667623075371927}{21852178004855278080000 \pi ^4}+\frac{2819369438967901759}{869880840192000000 \pi ^6}+2-\frac{25100797053643620547}{975094112225376000 \pi ^2},&\\
&\phantom{=\Bigg(} \frac{1581133359667623075371927}{14568118669903518720000 \pi ^4},-\frac{21}{5}+\frac{1581133359667623075371927}{36420296674758796800000 \pi ^4}+\frac{25100797053643620547}{325031370741792000 \pi ^2},&\\
&\phantom{=\Bigg(} \frac{25100797053643620547}{325031370741792000 \pi ^2}, 6+\frac{3585828150520517221}{162515685370896000 \pi ^2},\frac{9}{2},1\Bigg),&\\
\bJ_{10,\bullet}(-1) &= \Bigg(\frac{7142769685117513413611137831}{13319284084760520585863454122835968},\frac{15207860904181118336356297648935}{13319284084760520585863454122835968},&\\
&\phantom{=\Bigg(} \frac{9440668036340000013447895}{198472799133666166452518912},\frac{240195630998707566620445}{441541266148311827480576},&\\
&\phantom{=\Bigg(} \frac{65392213852270069737}{23659801379879256064},\frac{177147685252097540771}{23659801379879256064},&\\
&\phantom{=\Bigg(} \frac{8199101438535}{705117028352},\frac{29352612289095}{2820468113408},5,1\Bigg).&
\end{flalign*}
}}
\end{theorem}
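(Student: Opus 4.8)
The plan is to apply the recursive algorithm of Section~\ref{sec:relations} with $\beta = -1$, for which the relevant external-angle parameter becomes $2\beta + n - 1 = n - 3$. The essential structural point, already noted after~\eqref{eq:alg_J_1}, is that this parameter is invariant under the recursion: when~\eqref{eq:alg_J_2} rewrites $\bJ_{n,k}(-1)$ in terms of $\bI_{n,n-2s}(n-3)\,\bJ_{n-2s,k}(s-1)$, the external-angle sums subsequently needed to expand $\bJ_{n-2s,k}(s-1)$ carry the parameter $2(s-1) + (n-2s) - 1 = n-3$ as well. Consequently the entire computation of $\bJ_{n,\bullet}(-1)$ reduces to evaluating the single family $\bI_{m,\ell}(n-3)$ with $1 \le \ell < m \le n$; equivalently, one may simply invoke the closed form of Theorem~\ref{theo:formula_J_2}, which for $\beta = -1$ exhibits $\bJ_{n,k}(-1)$ directly as a polynomial in the $\bI_{m,\ell}(n-3)$, the only base datum being $\bJ_{1,1}(\beta) = 1$.

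First I would evaluate the required external-angle sums $\bI_{m,\ell}(n-3) = \binom{m}{\ell} I_{m,\ell}(n-3)$ from the trigonometric-integral formula~\eqref{eq:I_n_k}. For integer $\alpha = n-3$ the inner integral $\int_{-\pi/2}^{\varphi}(\cos\theta)^{\alpha}\,\dd\theta$ is elementary: for odd $\alpha$ it is a polynomial in $\sin\varphi$, while for even $\alpha$ it additionally contains a term proportional to $\varphi$. Raising this antiderivative to the power $m-\ell$, multiplying by $(\cos\varphi)^{\alpha\ell}$, and integrating over $[-\pi/2,\pi/2]$ yields a rational number when $\alpha$ is odd (i.e.\ $n$ even) and a $\bQ$-linear combination of $1,\pi^{-2},\pi^{-4},\ldots$ when $\alpha$ is even (i.e.\ $n$ odd), in agreement with Theorem~\ref{theo:I_arithm}. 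These integrations are routine but increasingly lengthy, and I would perform them symbolically in Mathematica.

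With the values $\bI_{m,\ell}(n-3)$ in hand, I would assemble $\bJ_{n,\bullet}(-1)$ by ascending induction on $n$, feeding the lower-order data into~\eqref{eq:alg_J_2} (or expanding Theorem~\ref{theo:formula_J_2} directly and substituting). The trivial values~\eqref{eq:J_trivial} and the known $n=4,5$ vectors serve as a starting point and a first check. As a further consistency test, Theorem~\ref{theo:arithm_J} predicts the exact arithmetic type of every entry: since $2\beta + n = n-2$, the vector $\bJ_{n,\bullet}(-1)$ is rational for even $n$ (cases $n=6,8,10$) and a combination of even negative powers of $\pi$ for odd $n$ (cases $n=7,9$), matching the displayed formulae.

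The main obstacle is computational rather than conceptual: the nested integrals and the ensuing polynomial expansions generate rational coefficients with enormous numerators and denominators already at $n=9,10$, so the real difficulty lies in carrying out the symbolic algebra reliably and reducing the output to the stated closed forms. A secondary subtlety is that, for odd $n$, individual entries sometimes collapse to a single power of $\pi$ — for instance $\bJ_{7,2}(-1) = \tfrac{113537407}{16128000\,\pi^4}$ loses its $1$ and $\pi^{-2}$ terms — a cancellation not explained by Theorems~\ref{theo:formula_J_1} and~\ref{theo:formula_J_2} and recorded in Conjecture~\ref{conj:J_arithm}; verifying the listed values does not require explaining these cancellations, only observing that they occur in the final simplification.
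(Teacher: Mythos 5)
Your proposal is correct and matches the paper's approach: the paper likewise obtains these vectors by running the recursion of Section~\ref{sec:relations} (equations~\eqref{eq:alg_J_1}/\eqref{eq:alg_J_2}) with the invariant parameter $2\beta+n-1=n-3$, evaluating the trigonometric integrals~\eqref{eq:I_n_k} symbolically in Mathematica~11, and checking the output against~\eqref{eq:J_trivial}, the known $n=4,5$ cases, and Theorem~\ref{theo:arithm_J}. No gaps.
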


\subsection{Internal angles of random simplices: Uniform distribution in the ball}
Let $X_1,\ldots,X_n$ be i.i.d.\ random points sampled uniformly from the unit ball $\bB^{n-1}$. 
The expected sum of internal angles of the simplex $[X_1,\ldots,X_n]$ at its $k$-vertex faces is $\bJ_{n,k}(0)$.
The values of $\bJ_{n,k}(0)$ for $n=1,2,3$ are the same as in~\eqref{eq:J_trivial}.
For simplices with $n=4$ and $n=5$ vertices (corresponding to dimensions $d=3$ and $4$), the following results were obtained in~\cite{kabluchko_angles} by a method not extending to higher dimensions:
\begin{equation*}
\bJ_{4,\bullet}(0) = \Bigg(\frac {401}{2560},\frac {2961}{2560},2,1\Bigg),
\qquad
\bJ_{5,\bullet}(0) = \Bigg(-\frac 16 + \frac{1692197}{846720 \pi^2},\frac{1692197}{282240 \pi^2},\frac 53 + \frac{1692197}{423360 \pi^2},\frac 52,1\Bigg).
\end{equation*}
Using Mathematica~11 and the above algorithm we recovered these results and, moreover, obtained the following
\begin{theorem}
We have
{\tiny{
\begin{flalign*}
\bJ_{6,\bullet}(0)&= \Bigg(\frac{112433094897}{17197049053184},
\frac{29573170815}{120259084288},
\frac{6929155}{4685824},
\frac{30358275}{9371648},
3,
1\Bigg),&\\
\bJ_{7,\bullet}(0)
&=
\Bigg(
\frac{1}{6}+\frac{36051577693123}{13519341158400 \pi ^4}-\frac{621038966291119}{325969178895360 \pi ^2}, \frac{36051577693123}{4506447052800 \pi ^4},&\\
&\phantom{=\Bigg(} -\frac{7}{6}+\frac{36051577693123}{6759670579200 \pi ^4}+\frac{621038966291119}{32596917889536 \pi ^2}, \frac{621038966291119}{21731278593024 \pi ^2},
\frac{7}{2}+\frac{621038966291119}{54328196482560 \pi ^2}, \frac{7}{2}, 1
\Bigg),&\\
\bJ_{8,\bullet}(0)
&=
\Bigg(
\frac{54854407266470750437}{407304109147506899681280}, \frac{1922620195704749849441}{81460821829501379936256}, \frac{1818739186251799}{4855443348258816}, \frac{6494630010305885}{3236962232172544},&\\
&\phantom{=\Bigg(} \frac{2403490929}{482344960}, \frac{9156320369}{1447034880}, 4, 1
\Bigg),&\\
\bJ_{9,\bullet}(0)
&=
\Bigg(
-\frac{3}{10}-\frac{3825746278401786849105853842941927}{513083615323402301904101376000000 \pi ^4}+\frac{834997968128824111294853689}{434049888937072472064000000 \pi^6}&\\
&\phantom{=\Bigg(} \quad \quad +\frac{25695566187355249503645020401}{6950795362764910977640320000 \pi ^2},
\frac{834997968128824111294853689}{144683296312357490688000000 \pi ^6},&\\
&\phantom{=\Bigg(} \frac{3825746278401786849105853842941927}{51308361532340230190410137600000 \pi ^4}+\frac{834997968128824111294853689}{217024944468536236032000000 \pi ^6}+2-\frac{25695566187355249503645020401}{992970766109272996805760000 \pi ^2},&\\
&\phantom{=\Bigg(} \frac{3825746278401786849105853842941927}{34205574354893486793606758400000 \pi ^4},&\\
&\phantom{=\Bigg(} -\frac{21}{5}+\frac{3825746278401786849105853842941927}{85513935887233716984016896000000 \pi ^4}+\frac{25695566187355249503645020401}{330990255369757665601920000 \pi ^2},&\\
&\phantom{=\Bigg(} \frac{25695566187355249503645020401}{330990255369757665601920000 \pi ^2},6+\frac{25695566187355249503645020401}{1158465893794151829606720000 \pi ^2},\frac{9}{2},1
\Bigg)&,\\
\bJ_{10,\bullet}(0)
&=
\Bigg(\frac{16173937433865922950599394579005791588389155}{9204102262874833628227344732391414668379518140416},&\\
&\phantom{=\Bigg(} \frac{12688011280876667528205329700413092651546251555}{9204102262874833628227344732391414668379518140416},&\\
&\phantom{=\Bigg(} \frac{32929953220484140728052018125551175}{640848401352029148689993712621584384}, \frac{210765193340397846616524118474155}{373323101767213558740779093983232},&\\
&\phantom{=\Bigg(} \frac{371193086109705273947602629}{131859245100259540744536064},\frac{2253773101928857034270262735}{298418291542692644842897408},&\\
&\phantom{=\Bigg(} \frac{15529150935155595}{1330783805505536},\frac{55452665100321675}{5323135222022144},5,1\Bigg).&
\end{flalign*}
}}
\end{theorem}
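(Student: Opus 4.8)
The plan is to run the recursive algorithm of Section~\ref{sec:relations} with $\beta=0$ and $n\in\{6,7,8,9,10\}$, evaluating the required external-angle sums $\bI_{m,\ell}$ symbolically. The essential structural observation is that for $\beta=0$ the external-angle parameter equals $2\beta+n-1=n-1$, and, as already noted after Theorem~\ref{theo:formula_J_1}, this parameter is preserved along every branch of the recursion. Indeed, applying the efficient formula~\eqref{eq:alg_J_2} gives
$$
\bJ_{n,k}(0) = \frac 12 \binom nk - \sum_{s=1}^{\lfloor (n-k)/2\rfloor} \bI_{n,n-2s}(n-1)\,\bJ_{n-2s,k}(s),
$$
and each descendant $\bJ_{n-2s,k}(s)$ in turn invokes external angles with parameter $2s+(n-2s)-1=n-1$. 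Hence the whole computation of the vector $\bJ_{n,\bullet}(0)$ reduces to the single finite family $\{\bI_{m,\ell}(n-1)\colon 1\le \ell<m\le n\}$, together with the terminal values $\bJ_{m,m}(\cdot)=1$ and the already-known low-order vectors from~\eqref{eq:J_trivial} and the $n=4,5$ cases.

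First I would compute these $\bI_{m,\ell}(n-1)$ from the integral representation~\eqref{eq:I_n_k} of Theorem~\ref{theo:external}. For the integer parameter $\alpha=n-1$ the inner integral $\int_{-\pi/2}^{\varphi}(\cos\theta)^{\alpha}\,\dd\theta$ has an elementary antiderivative; raising it to the power $n-\ell$ and integrating the resulting trigonometric polynomial against $(\cos\varphi)^{\alpha\ell}$ over $[-\pi/2,\pi/2]$ produces a closed form. By Theorem~\ref{theo:I_arithm} this closed form is rational when $\alpha=n-1$ is odd (that is, $n$ even) and a rational combination of $1,\pi^{-2},\ldots,\pi^{-(n-\ell)}$ when $\alpha$ is even (that is, $n$ odd); these evaluations are exactly what Mathematica~11 carries out.

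Next I would substitute the computed integrals back into~\eqref{eq:alg_J_2}, proceeding from $n=6$ upward and reusing the lower-$n$ vectors as they are generated, then simplify to exact rational (respectively rational-times-power-of-$\pi$) form. The a priori shape of the output is guaranteed by Theorem~\ref{theo:arithm_J} with $2\beta+n=n$: the entries are rational for $n\in\{6,8,10\}$ and lie in $\bQ[\pi^{-2}]$ of the stated degree for $n\in\{7,9\}$. In several cases (for instance $\bJ_{7,2}(0)$ turning out to be a pure multiple of $\pi^{-4}$) the result is even simpler than Theorem~\ref{theo:arithm_J} predicts, in agreement with Conjecture~\ref{conj:J_arithm}, but we do not need this strengthening.

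The main obstacle is computational rather than conceptual: the integrand $\bigl(\int_{-\pi/2}^{\varphi}(\cos\theta)^{n-1}\,\dd\theta\bigr)^{n-\ell}$ expands into many terms, the exact rational arithmetic generates numerators and denominators with dozens of digits (as the displayed answers already reveal), and the number of chains in the recursion grows rapidly with $n$. Carrying this out reliably by hand is infeasible, so the proof consists in verifying that the symbolic computation, performed with exact arithmetic in Mathematica~11, returns the stated vectors. No cancellation needs to be established, since only the final simplified values are reported.
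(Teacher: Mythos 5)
Your proposal is correct and follows exactly the route the paper takes: the theorem is obtained by running the recursive scheme of Section~\ref{sec:relations} (in particular~\eqref{eq:alg_J_2}) with $\beta=0$, evaluating the trigonometric integrals $\bI_{m,\ell}(n-1)$ from~\eqref{eq:I_n_k} symbolically in Mathematica~11, and the a priori arithmetic shape of the answers is the one guaranteed by Theorems~\ref{theo:I_arithm} and~\ref{theo:arithm_J}. Your observation that the external-angle parameter $2\beta+n-1$ is preserved along every branch of the recursion matches the paper's remark following Theorem~\ref{theo:formula_J_1}, so nothing is missing.
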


\subsection{Typical Poisson-Voronoi cells}
Let $P_1,P_2,\ldots$ be the points of a Poisson point process on $\R^d$ with constant intensity $1$. 
The \emph{typical Poisson-Voronoi cell} is a random polytope which, for our purposes, can be defined as follows:
$$
\mathcal V_d := \{x\in \R^d\colon \|x\| \leq \|x-P_j\| \text{ for all $j\in\N$}\}.
$$
The typical Poisson-Voronoi cell is one of the classical objects of stochastic geometry; see~\cite{SW08,moller,moller_book,calka_tess_rev,calka_tess_asympt_rev,hug_rev} for reviews and the works of  Meijering~\cite{meijering}, Gilbert~\cite{gilbert} and Miles~\cite{miles_synopsis} for important early contributions.
We shall be interested in the expected $f$-vector of $\mathcal V_d$ denoted by
$$
\E \mathbf f(\mathcal V_{d}) = (\E f_0(\mathcal V_{d}),\E f_1(\mathcal V_{d}),\ldots,\E f_{d-1}(\mathcal V_{d})),
$$
where $f_k(\mathcal V_d)$ is the number of $k$-dimensional faces of $\mathcal V_d$.
To the best of our knowledge, explicit formulae for the complete vector $\E \mathbf f(\mathcal V_{d})$ have been known only in dimensions $d=2$ and $3$:
\begin{equation}\label{eq:Poisson_Voronoi_d=2,3}
\E \mathbf f (\mathcal V_{2}) = (6, 6),
\qquad
\E \mathbf f (\mathcal V_{3}) = \Bigg(\frac{96 \pi ^2}{35},\frac{144 \pi ^2}{35},2+\frac{48 \pi ^2}{35}\Bigg),
\end{equation}
see~\cite[Theorem~10.2.5]{SW08} or~\cite[Equation~(7.13)]{moller}.
The following formula can be found in the works of Miles~\cite[Equation~(75)]{miles_synopsis} and M\o ller~\cite[Theorem~7.2]{moller}:
\begin{equation}\label{eq:moller}
\E f_0(\mathcal V_d) = \frac {2^{d+1}\pi^{(d-1)/2}}{d^2} \frac{\Gamma(\frac{d^2+1}{2})}{\Gamma(\frac {d^2}{2})}
\left(\frac{\Gamma(\frac {d+2}2)}{\Gamma(\frac{d+1}{2})}\right)^d.
\end{equation}
In fact, there is a more general formula~\cite[Theorem~7.2]{moller} for the expected $s$-content of all $s$-faces of a typical $t$-face in a $d$-dimensional tessellation, but it is only the case $s=0$, $t=d$ for which this results yields a formula for some entry of the expected $f$-vector of $\mathcal V_d$.

For arbitrary $d\in\N$ and for all $k\in \{0,\ldots,d-1\}$, it has been shown in~\cite{beta_polytopes} (see Theorem~1.21 and its proof there, with $\alpha = d$) that
\begin{equation}\label{eq:poisson_voronoi_f_k}
\E f_k(\mathcal V_{d})
= 2\sum_{\substack{m\in \{d-k,\ldots,d\}\\ m\equiv d \Mod{2}}} \tilde  \bI_{\infty,m}(d) \tilde{\bJ}_{m,d-k}\left({m-1+d\over 2}\right),
\end{equation}
where
\begin{equation}\label{eq:poisson_voronoi_f_k_addition}
\tilde \bI_{\infty,m}(d)
:=
\lim_{n\to\infty} \tilde \bI_{n,m}(d)
=
\frac{\tilde c_{1, \frac{dm+1}{2}}}{\tilde c_{1,\frac{d+1}{2}}^m} \cdot \frac{d^{m-1}}{m}
=
{\Gamma({\frac{md + 1} 2})\over\Gamma({md\over 2})}\left({\Gamma({d\over 2})\over\Gamma({d+1\over 2})}\right)^{m}{(\sqrt{\pi}d)^{m-1}\over m}.
\end{equation}
Taking $k=0$, we recover~\eqref{eq:moller}. Formula~\eqref{eq:poisson_voronoi_f_k}, together with the algorithm for computation of $\tilde \bJ_{n,k}(\beta)$, allows us to compute $\E f_k(\mathcal V_{d})$ in finitely many steps. Using Mathematica~11, we have done this in dimensions $d\in \{2,\ldots,10\}$. As a result, we recovered~\eqref{eq:Poisson_Voronoi_d=2,3} and, moreover, obtained the following
\begin{theorem}\label{theo:poisson_voronoi}
The expected $f$-vector of the typical Poisson-Voronoi cell is given by
{\tiny
\begin{flalign*}
\E \mathbf f (\mathcal V_{4}) &= \Bigg(\frac{1430}{9},\frac{2860}{9},\frac{590}{3},\frac{340}{9}\Bigg),&\\
\E \mathbf f (\mathcal  V_{5}) &= \Bigg(\frac{7776000 \pi ^4}{676039},\frac{19440000 \pi ^4}{676039},\frac{2716500 \pi ^2}{49049}+\frac{12960000 \pi ^4}{676039},\frac{4074750 \pi ^2}{49049},2+\frac{1358250 \pi ^2}{49049}-\frac{1296000 \pi ^4}{676039}\Bigg),&\\
\E \mathbf f (\mathcal  V_{6}) &= \Bigg(\frac{90751353}{10000},\frac{272254059}{10000},\frac{120613311}{4000},\frac{14930979}{1000},\frac{62611437}{20000},\frac{4053}{20}\Bigg),&\\
\E \mathbf f (\mathcal  V_{7}) &=\Bigg(\frac{27536588800000 \pi ^6}{322476036831},\frac{96378060800000 \pi ^6}{322476036831},\frac{145800103122713984000 \pi ^4}{139352342399730603}+\frac{96378060800000 \pi ^6}{322476036831},&\\
&\phantom{=\Bigg(} \frac{364500257806784960000 \pi ^4}{139352342399730603},
\frac{1088840823954800 \pi ^2}{1430074210851}+\frac{729000515613569920000 \pi ^4}{418057027199191809}-\frac{96378060800000 \pi ^6}{967428110493},&\\
&\phantom{=\Bigg(} \frac{544420411977400 \pi ^2}{476691403617},2+\frac{544420411977400 \pi ^2}{1430074210851}-\frac{72900051561356992000 \pi ^4}{418057027199191809}+\frac{13768294400000 \pi ^6}{967428110493}\Bigg) ,&\\
\E \mathbf f (\mathcal V_{8}) &= \Bigg(\frac{37400492672297766}{45956640625},\frac{149601970689191064}{45956640625},\frac{6850391092580412}{1313046875},\frac{27954881044110648}{6565234375},\frac{17044839181035378}{9191328125},&\\
&\phantom{=\Bigg(} \frac{18843745433119128}{45956640625},\frac{5212716470964}{133984375},\frac{4422456}{4375}\Bigg),&\\
\E \mathbf f (\mathcal V_{9}) &= \Bigg(\frac{100837904362675200000000 \pi ^8}{109701233401363445369},\frac{453770569632038400000000 \pi ^8}{109701233401363445369},&\\
&\phantom{=\Bigg(}\frac{2852955835216853216138612837266320000 \pi ^6}{134952926502386519274273464063983}+\frac{605027426176051200000000 \pi ^8}{109701233401363445369},&\\
&\phantom{=\Bigg(}\frac{9985345423258986256485144930432120000 \pi ^6}{134952926502386519274273464063983},&\\
&\phantom{=\Bigg(} \frac{16352535012213243758810504565072375 \pi ^4}{326981148443273530305985029716}+\frac{9985345423258986256485144930432120000 \pi ^6}{134952926502386519274273464063983}&\\
&\phantom{=\Bigg(} \quad \quad -\frac{423519198323235840000000 \pi ^8}{109701233401363445369},&\\
&\phantom{=\Bigg(}\frac{81762675061066218794052522825361875 \pi ^4}{653962296886547060611970059432},&\\
&\phantom{=\Bigg(}\frac{19758536784497995373925 \pi ^2}{2249321131934361056}+\frac{27254225020355406264684174275120625 \pi ^4}{326981148443273530305985029716}&\\
&\phantom{=\Bigg(} \quad\quad -\frac{3328448474419662085495048310144040000 \pi ^6}{134952926502386519274273464063983}+\frac{201675808725350400000000 \pi ^8}{109701233401363445369},&\\
&\phantom{=\Bigg(}\frac{59275610353493986121775 \pi ^2}{4498642263868722112},&\\
&\phantom{=\Bigg(}2+\frac{19758536784497995373925 \pi ^2}{4498642263868722112}-\frac{5450845004071081252936834855024125 \pi ^4}{653962296886547060611970059432}&\\
&\phantom{=\Bigg(}\quad \quad  +\frac{475492639202808869356435472877720000 \pi ^6}{134952926502386519274273464063983}-\frac{30251371308802560000000 \pi ^8}{109701233401363445369}\Bigg) ,&\\
\E \mathbf f (\mathcal V_{10})&= \Bigg(\frac{155696519360438569961130397}{1556433053837891712},\frac{778482596802192849805651985}{1556433053837891712}, \frac{363290492786125188681583835}{345874011963975936},&\\
&\phantom{=\Bigg(}
\frac{4865451274315354941235930}{4053211077702843},\frac{89845553163656455297282315}{111173789559849408},\frac{23998744131568764316595507}{74115859706566272},&\\
&\phantom{=\Bigg(}\frac{32972345885500895805463345}{444695158239397632},\frac{377982052291467600549815}{43234251495496992},\frac{5889025850448565}{13894111602},&\\
&\phantom{=\Bigg(}\frac{402700265}{83349}\Bigg).&
\end{flalign*}
}
\end{theorem}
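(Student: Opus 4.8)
The plan is to treat the theorem as the output of a finite symbolic computation built on formula~\eqref{eq:poisson_voronoi_f_k}, which is already established in~\cite{beta_polytopes} and which I take as the starting point. For each fixed dimension $d\in\{4,\ldots,10\}$ and each $k\in\{0,\ldots,d-1\}$, that formula writes $\E f_k(\mathcal V_d)$ as a finite sum, over those $m\in\{d-k,\ldots,d\}$ with $m\equiv d\pmod 2$, of the products $\tilde\bI_{\infty,m}(d)\,\tilde\bJ_{m,d-k}(\tfrac{m-1+d}{2})$. Thus the whole theorem reduces to evaluating these two families of quantities in closed form and multiplying them out.

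The first family, $\tilde\bI_{\infty,m}(d)$, is already in closed form through~\eqref{eq:poisson_voronoi_f_k_addition} as a ratio of Gamma functions times a power of $\sqrt{\pi}\,d$; for each integer $d$ the relevant Gamma arguments are integers or half-integers, so this collapses to an explicit rational multiple of a power of $\pi$, and I would simply substitute the integer values of $d$ and $m$ and simplify. The second family, $\tilde\bJ_{m,d-k}(\tfrac{m-1+d}{2})$, is produced by the recursive scheme of Section~\ref{sec:relations}. Here the angle parameter is the half-integer $\beta=\tfrac{m-1+d}{2}$ and the associated beta$'$ parameter is $2\beta-m+1=d$, an integer, so the recursion terminates after finitely many steps and expresses $\tilde\bJ_{m,d-k}$ as a polynomial in the finite-$n$ trigonometric quantities $\tilde\bI_{n,n-s}(d)$. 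By Theorem~\ref{theo:I_arithm_tilde}, each such $\tilde\bI_{n,n-s}(d)$ is itself an explicit rational number (for even $d$) or an explicit rational combination of $1,\pi^{-2},\pi^{-4},\ldots$ (for odd $d$), so every step of the recursion remains inside a computable field and the final $\tilde\bJ$-values are exact.

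Combining the two families and simplifying yields the listed entries. The arithmetic dichotomy visible in the theorem is explained by Theorem~\ref{theo:arithm_J_tilde}: since $2\beta-m=d-1$, the $\tilde\bJ$-factors are rational exactly when $d$ is even and carry powers of $\pi^{-2}$ when $d$ is odd, which matches the observed pattern of rational $f$-vectors for $d\in\{4,6,8,10\}$ and $\pi$-dependent ones for $d\in\{5,7,9\}$. Conceptually there is no obstacle: the argument is a verification that the machinery of the preceding sections can be carried out to completion. The genuine difficulty is purely one of computational scale — for $d=9,10$ the recursion unfolds into a large collection of high-order trigonometric integrals $\tilde\bI_{n,k}(d)$ and the resulting rational coefficients acquire enormous numerators and denominators — so in practice the evaluation is performed symbolically in Mathematica, the closed forms above guaranteeing that the output is exact rather than merely numerical.
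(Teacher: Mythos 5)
Your proposal matches the paper's own argument: the paper likewise derives the theorem by combining formula~\eqref{eq:poisson_voronoi_f_k} with the closed form~\eqref{eq:poisson_voronoi_f_k_addition} for $\tilde\bI_{\infty,m}(d)$ and the recursive scheme of Section~\ref{sec:relations} for $\tilde\bJ_{m,d-k}\bigl(\tfrac{m-1+d}{2}\bigr)$, carried out symbolically in Mathematica for $d\le 10$. Your observations that the beta$'$ parameter collapses to the integer $d$ throughout the recursion and that the rational/$\pi$-power dichotomy follows from Theorems~\ref{theo:I_arithm_tilde} and~\ref{theo:arithm_J_tilde} are exactly the points that make the computation exact, so the proposal is correct and essentially identical to the paper's proof.
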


Combining~\eqref{eq:poisson_voronoi_f_k} with Theorem~\ref{theo:arithm_J_tilde}, we can say something about the arithmetic structure of   $\E f_k(\mathcal V_d)$ for arbitrary dimension $d$.
\begin{theorem}\label{theo:poisson_voronoi_arithm}
Let $d\in\N$ and $k\in \{0,\ldots,d-1\}$.
\begin{itemize}
\item[(a)] If $d$ is even, then $\E f_k(\mathcal V_{d})$ is a rational number.
\item[(b)] If $d$ is odd, then $\E f_k(\mathcal V_{d})$ can be expressed as  $q_{d-1} \pi^{d-1} + q_{d-3} \pi^{d-3} + \ldots +q_{d-k-1}\pi^{d-k-1}$, (if $k$ is even) or $q_{d-1} \pi^{d-1} + q_{d-3} \pi^{d-3} + \ldots +q_{d-k}\pi^{d-k}$ (if $k$ is odd), where the coefficients $q_{i}$ are rational.
\end{itemize}
\end{theorem}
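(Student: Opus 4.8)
The plan is to read off the arithmetic type of each factor in the representation~\eqref{eq:poisson_voronoi_f_k} and then track the powers of $\pi$ through the product and the sum over $m$. Throughout I use the elementary fact that $\Gamma$ evaluated at a positive integer is rational, while $\Gamma$ evaluated at a positive half-integer equals a rational multiple of $\sqrt\pi$. Note also that in the sum~\eqref{eq:poisson_voronoi_f_k} the index $m$ satisfies $m\equiv d\Mod 2$, so $m$ and $d$ always have the same parity.

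First I would analyze the Gamma factor $\tilde\bI_{\infty,m}(d)$ given by~\eqref{eq:poisson_voronoi_f_k_addition}. When $d$ is even, $md/2$ and $d/2$ are integers whereas $(md+1)/2$ and $(d+1)/2$ are half-integers, so $\Gamma((md+1)/2)/\Gamma(md/2)$ contributes one factor $\sqrt\pi$, the $m$-th power of $\Gamma(d/2)/\Gamma((d+1)/2)$ contributes $\pi^{-m/2}$, and $(\sqrt\pi\,d)^{m-1}$ contributes $\pi^{(m-1)/2}$; the three half-powers combine to $\pi^{1/2-m/2+(m-1)/2}=\pi^0$, leaving $\tilde\bI_{\infty,m}(d)$ rational. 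When $d$ is odd, $m$ is odd as well, so now $md$ is odd: $\Gamma((md+1)/2)/\Gamma(md/2)$ contributes $\pi^{-1/2}$, the $m$-th power of $\Gamma(d/2)/\Gamma((d+1)/2)$ contributes $\pi^{m/2}$, and $(\sqrt\pi\,d)^{m-1}$ again contributes $\pi^{(m-1)/2}$; the total exponent is $-1/2+m/2+(m-1)/2=m-1$, so $\tilde\bI_{\infty,m}(d)$ is a rational multiple of $\pi^{m-1}$, an even power since $m$ is odd.

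Next I would feed the second factor into Theorem~\ref{theo:arithm_J_tilde} with $n=m$, face-index $d-k$, and $\beta=(m-1+d)/2$; here $\beta$ is a half-integer and $2\beta-n=d-1$. For $d$ even, $d-1$ is odd, so part~(a) of that theorem makes $\tilde\bJ_{m,d-k}(\beta)$ rational; combined with the rationality of $\tilde\bI_{\infty,m}(d)$, every summand in~\eqref{eq:poisson_voronoi_f_k} is rational, which proves part~(a). For $d$ odd, $d-1$ is even, so part~(b) expresses $\tilde\bJ_{m,d-k}(\beta)$ as a rational combination of $\pi^0,\pi^{-2},\dots,\pi^{-(m-d+k)}$ when $k$ is even and of $\pi^0,\dots,\pi^{-(m-d+k-1)}$ when $k$ is odd, using that $n-k=m-d+k\equiv k\Mod 2$.

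Finally I would multiply and collect. Each summand equals $\pi^{m-1}$ times a rational combination of the above nonpositive even powers of $\pi$, so its $\pi$-exponents run, in steps of $2$, from $m-1$ down to $d-k-1$ if $k$ is even and down to $d-k$ if $k$ is odd; crucially the bottom exponent does not depend on $m$. Ranging over $m=d,d-2,\dots$ down to the smallest admissible value (which is $d-k$ if $k$ is even and $d-k+1$ if $k$ is odd), the top exponent is maximized at $m=d$, giving $\pi^{d-1}$. Since $d$ is odd, all these exponents are even, and their union is exactly $\{d-1,d-3,\dots,d-k-1\}$ when $k$ is even and $\{d-1,d-3,\dots,d-k\}$ when $k$ is odd, which yields the asserted form with rational coefficients. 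The only delicate point is the bookkeeping of the half-integer Gamma values and the verification that the various half-powers of $\pi$ cancel to leave the clean exponent $m-1$; once that is in place, the parity alignment between the $\tilde\bI$ and $\tilde\bJ$ factors and the resulting telescoping of the exponent ranges is routine.
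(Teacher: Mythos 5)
Your proposal is correct and follows essentially the same route as the paper's proof: it reads off the arithmetic type of $\tilde\bI_{\infty,m}(d)$ from the parities of the Gamma arguments in~\eqref{eq:poisson_voronoi_f_k_addition}, invokes Theorem~\ref{theo:arithm_J_tilde} with $2\beta-n=d-1$ for the $\tilde\bJ$-factor, and tracks the resulting even powers of $\pi$ through the product and the sum over $m$. The only difference is that you spell out the half-power bookkeeping for the Gamma factors, which the paper states without detail.
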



\begin{proof}[Proof of Part (a).] Let $d$ be even. Recall that $\Gamma(x)$ is integer if $x>0$ is integer, and is a rational multiple of $\sqrt \pi$ if $x>0$ is half-integer. It follows from~\eqref{eq:poisson_voronoi_f_k_addition} that $\tilde \bI_{\infty,m}(d)$ is rational. Also, by Theorem~\ref{theo:arithm_J_tilde}, Part~(a),   $\tilde{\bJ}_{m,d-k}({m-1+d\over 2})$ is rational. It follows from~\eqref{eq:poisson_voronoi_f_k} that $\E f_k(\mathcal V_{d})$ is rational.
\end{proof}

\begin{proof}[Proof of Part (b).]
Let now $d$ be odd. The summation in~\eqref{eq:poisson_voronoi_f_k} is over odd values of $m$. For any such value, $\tilde \bI_{\infty,m}(d)$ is a rational multiple of $\pi^{m-1}$.  On the other hand, by Theorem~\ref{theo:arithm_J_tilde}, Part~(b),  $\tilde{\bJ}_{m,d-k}({m-1+d\over 2})$ can be written as a $\bQ$-linear combination of $\pi^{-j}$, where $j$ is even and satisfies $0\leq j\leq m-d+k$.
It follows that $\tilde \bI_{\infty,m}(d)\tilde{\bJ}_{m,d-k}({m-1+d\over 2})$ is a $\bQ$-linear combination of $\pi^{\ell}$, $d-k-1\leq \ell \leq m-1$, with $\ell\equiv m-1 \equiv d-1 \Mod{2}$. The claim follows.
\end{proof}

In fact, a closer look at the values collected in Theorem~\ref{theo:poisson_voronoi} suggests the following conjecture which is a consequence of Conjecture~\ref{conj:J_tilde_arithm}. 
\begin{conjecture}
If both $d$ and $k$ are odd, then $\E f_k(\mathcal V_{d})$ is a number of the form $q\pi^{d-k}$ with some rational $q$.
\end{conjecture}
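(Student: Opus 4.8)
The plan is to deduce the statement directly from formula~\eqref{eq:poisson_voronoi_f_k} together with Conjecture~\ref{conj:J_tilde_arithm}, by tracking the exponent of $\pi$ in each summand and checking that it is always exactly $d-k$, independently of the summation index $m$. Thus the only substantial input is the (conjectural) single-power form of the internal-angle sums $\tilde\bJ$; everything else is bookkeeping with Gamma-function parities.

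First I would analyze the factor $\tilde\bI_{\infty,m}(d)$. Since $d$ is odd and the sum in~\eqref{eq:poisson_voronoi_f_k} runs over $m\equiv d\Mod 2$, every index $m$ is odd, so $md$ is odd. Reading off~\eqref{eq:poisson_voronoi_f_k_addition} and using that $\Gamma$ takes integer values at positive integers and rational multiples of $\sqrt\pi$ at positive half-integers, the ratio $\Gamma(\tfrac{md+1}{2})/\Gamma(\tfrac{md}{2})$ contributes a factor $\pi^{-1/2}$, the power $(\Gamma(\tfrac d2)/\Gamma(\tfrac{d+1}2))^m$ contributes $\pi^{m/2}$, and $(\sqrt\pi\,d)^{m-1}$ contributes $\pi^{(m-1)/2}$, all up to rational factors. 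Adding the exponents gives $\pi^{m-1}$, so $\tilde\bI_{\infty,m}(d)$ is a rational multiple of $\pi^{m-1}$. (This step is already carried out in the proof of Theorem~\ref{theo:poisson_voronoi_arithm}(b).)

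Next I would apply Conjecture~\ref{conj:J_tilde_arithm} to the factor $\tilde\bJ_{m,d-k}\!\big(\tfrac{m-1+d}{2}\big)$. Writing $n=m$, $\beta=\tfrac{m-1+d}{2}$ and $k'=d-k$, one has $2\beta-n=d-1$, which is even because $d$ is odd, while $k'=d-k$ is even because $d$ and $k$ are both odd; so the hypotheses of Conjecture~\ref{conj:J_tilde_arithm} are met. Moreover $n-k'=m-d+k$ is odd, so the conjecture gives $\tilde\bJ_{m,d-k}\!\big(\tfrac{m-1+d}{2}\big)=q\,\pi^{-(m-d+k-1)}$ with $q\in\bQ$. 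Multiplying the two factors, the exponent of $\pi$ in the $m$-th summand of~\eqref{eq:poisson_voronoi_f_k} equals
\[
(m-1)-(m-d+k-1)=d-k,
\]
independent of $m$. Hence every summand is a rational multiple of $\pi^{d-k}$, and therefore so is the whole (finite) sum, proving $\E f_k(\mathcal V_d)=q\pi^{d-k}$ for some rational $q$.

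The crux — and the sole obstacle — is Conjecture~\ref{conj:J_tilde_arithm} itself, which is unproven, so the statement is established only conditionally. What that conjecture supplies beyond the unconditional Theorem~\ref{theo:arithm_J_tilde}(b) is that $\tilde\bJ_{m,d-k}(\tfrac{m-1+d}{2})$ is a \emph{single} power of $\pi$ rather than a $\bQ$-linear combination of $\pi^{0},\pi^{-2},\dots,\pi^{-(m-d+k-1)}$. Using only Theorem~\ref{theo:arithm_J_tilde}(b), the $m$-th summand would a priori spread over the powers $\pi^{d-k},\pi^{d-k+2},\dots,\pi^{m-1}$, and summing over $m$ reproduces exactly the range $\pi^{d-k},\dots,\pi^{d-1}$ appearing in Theorem~\ref{theo:poisson_voronoi_arithm}(b); the sharpened conjecture asserts that all powers strictly above $\pi^{d-k}$ cancel. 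An unconditional proof would thus need to exhibit this cancellation, presumably from a yet-undiscovered closed form for $\tilde\bJ_{n,k}(\beta)$, which lies beyond the tools developed here.
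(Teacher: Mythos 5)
Your derivation is correct and is exactly the argument the paper intends: the statement is presented as a conjecture precisely because it is conditional on Conjecture~\ref{conj:J_tilde_arithm}, and your power-of-$\pi$ bookkeeping (reusing the fact from the proof of Theorem~\ref{theo:poisson_voronoi_arithm}(b) that $\tilde\bI_{\infty,m}(d)$ is a rational multiple of $\pi^{m-1}$ for odd $m$, then checking that the hypotheses of Conjecture~\ref{conj:J_tilde_arithm} hold with $2\beta-n=d-1$ even and $k'=d-k$ even, $n-k'=m-d+k$ odd) is the verification of that implication. You also correctly identify that the only gap separating this from an unconditional proof is the cancellation of the higher powers $\pi^{d-k+2},\dots,\pi^{d-1}$ asserted by Conjecture~\ref{conj:J_tilde_arithm}, which the paper likewise leaves open.
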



\subsection{Random polytopes approximating smooth convex bodies}\label{subsec:rand_poly_appr_conv_body}
Let $U_1,U_2,\ldots$ be independent random points distributed uniformly in the $d$-dimensional convex body $K$. Denote the convex hull of $n$ such points by $K_{n,d}= [U_1,\ldots,U_n]$. Asymptotic properties of $K_{n,d}$, as $n\to\infty$, have been very much studied starting with the work of R\'enyi and Sulanke~\cite{renyi_sulanke1,renyi_sulanke2} (see, for example, \cite{schneider_polytopes,hug_rev}) and  we shall not attempt to review the vast literature on this topic.
In particular, regarding the $f$-vector of $K_{n,d}$, this development culminated in the work of Reitzner~\cite{ReitznerCombinatorialStructure} who proved the following result~\cite[p.~181]{ReitznerCombinatorialStructure}. If the boundary of $K$ is of differentiability class $\mathcal C^2$ and the Gaussian curvature $\kappa(x)>0$ is positive at every boundary point $x\in \partial K$, then
\begin{equation}\label{eq:ReitznerExpectation}
\lim_{n\to\infty}
n^{-{d-1\over d+1}}\E f_k(K_{n,d})
=
c_{d,k}\,\Omega(K)/\Vol_d(K)^{\frac{d-1}{d+1}},
\end{equation}
for every $k\in\{0,1,\ldots,d-1\}$, where $\Omega(K):=\int_{\partial K}\kappa(x)^{1\over d+1} \dint x$ is the so-called affine surface area of $K$, and $c_{d,0}, \ldots, c_{d,d-1}$ are certain strictly positive constants not depending on $K$. In~\cite{ReitznerCombinatorialStructure}, Equation~\eqref{eq:ReitznerExpectation} is stated without the term involving $\Vol_d(K)$, for which it is necessary to assume that $K$ has unit volume. The general case follows from the following scaling property of the affine surface area:
$$
\Omega(r K) = r^{\frac{d(d-1)}{d+1}}\Omega (K), \qquad r>0,
$$
see, e.g., Theorem~3.6 in~\cite{hug_affine} and take $p=1$ there.

As Reitzner~\cite[p.~181]{ReitznerCombinatorialStructure} writes, ``It would be of interest to determine the vector $\mathbf c_d= (c_{d,0},\ldots, c_{d,d-1})$; but we have not succeeded in getting an explicit expression''.
Our aim is to provide explicit expressions for $\mathbf c_d$ for all $d\leq 10$. In the following, it will be convenient to take $K := \bB^d$ (which is possible since $\mathbf c_d$ does not depend on $K$) and use the notation
\begin{equation}\label{eq:def_C_d_k}
C_{d,k}
:=
\lim_{n\to\infty}
n^{-{d-1\over d+1}}\E f_k(P_{n,d}^0)
=
c_{d,k}\, \Omega(\bB^d)/\Vol_d(\bB^d)^{\frac{d-1}{d+1}}
=
\frac{d\cdot  \pi^{\frac d {d+1}}}{\Gamma(1+\frac d2)^{\frac 2 {d+1}}} \cdot c_{d,k},
\end{equation}
for all $k\in\{0,1,\ldots,d-1\}$.
Here, we recall that $P_{n,d}^0 = [X_1,\ldots,X_n]$ is the convex hull of $n$ i.i.d.\ random points $X_1,\ldots,X_n$ distributed uniformly in the ball $\bB^d$. Note also that the affine surface area of the unit ball coincides with its usual surface area: $\Omega(\bB^d)=2\pi^{d/2}/\Gamma({d\over 2})$. For $d=2$, the value of $C_{2,0}=C_{2,1}$ has been identified by R\'enyi and Sulanke~\cite[Satz~3]{renyi_sulanke1} who proved that
$$
C_{2,0} = C_{2,1} =  \lim_{n\to\infty} n^{-1/3} \E f_1(P_{n,2}^0) = \lim_{n\to\infty} n^{-1/3} \E f_0(P_{n,2}^0) = 2 \Gamma(5/3) \pi^{2/3} \sqrt[3]{2/3} .
$$
If $d\in\N$ is arbitrary and $k=d-1$, Affentranger~\cite{affentranger} (see his Corollary~1 on p.~366, the formula for $c_3$ on p.~378, and take $q=0$) proved that
\begin{equation}\label{eq:Limitf_kBall_affentranger}
C_{d,d-1}
=
{2\pi^{d(d-1)\over 2(d+1)}\over (d+1)!}
{\Gamma(1+{d^2\over 2})\Gamma({d^2+1\over d+1})\over\Gamma({d^2+1\over 2})}\left({(d+1)\Gamma({d+1\over 2})\over \Gamma(1+{d\over 2})}\right)^{d^2+1\over d+1}.
\end{equation}
Note also that an exact formula for the number of facets of a convex hull of $N$ i.i.d.\ points sampled uniformly from the ball $\bB^d$ has been obtained by Buchta and M\"uller~\cite{buchta_mueller} (see their Theorem~3 on page~760), but it requires some work to analyze its asymptotic behavior as $N\to\infty$.
In~\cite[Remark~1.9]{beta_polytopes}, it has been shown that for all $d\in\N$ and $k\in\{0,\ldots,d-1\}$, we have
\begin{equation}\label{eq:Limitf_kBall}
C_{d,k}
= {2\pi^{d(d-1)\over 2(d+1)}\over (d+1)!}
{\Gamma(1+{d^2\over 2})\Gamma({d^2+1\over d+1})\over\Gamma({d^2+1\over 2})}\left({(d+1)\Gamma({d+1\over 2})\over \Gamma(1+{d\over 2})}\right)^{d^2+1\over d+1} \bJ_{d,k+1}(1/2).
\end{equation}
In the special case $k=d-1$, equation~\eqref{eq:Limitf_kBall} reduces to~\eqref{eq:Limitf_kBall_affentranger} since $\bJ_{d,d}(1/2) = 1$. Hug~\cite[Corollary~7.1 and p.~209]{hug_rev} gave a formula for $c_{d,0}$ (and, hence, for $C_{d,0}$) which is equivalent to the formula for $\bJ_{d,1}(1/2)$ which will be stated in Theorem~\ref{theo:beta_+1/2} below.
Combining~\eqref{eq:Limitf_kBall} with the above algorithm for computing $\bJ_{d,k+1}(1/2)$, we obtain the following explicit formulae for Reitzner's constants in dimensions $d\leq 10$.

\begin{theorem}
The vectors $\mathbf C_d := (C_{d,0},\ldots, C_{d,d-1})$ are explicitly given by
{\tiny
\begin{flalign*}
\mathbf C_1 &= 2\cdot (1),&\\
\mathbf C_2 &= 2 \sqrt[3]{\frac{2}{3}} \pi ^{2/3} \Gamma \left(\frac{5}{3}\right) \times
(1,1),&\\
\mathbf C_3 &= \frac{35 \sqrt{\pi/3}}{4} \times \Bigg(\frac{1}{2},\frac{3}{2},1\Bigg),&\\
\mathbf C_4 &= \frac{20}{143} 2^{4/5} 15^{2/5} \pi ^{12/5} \Gamma \left(\frac{17}{5}\right) \times \Bigg(\frac{26741}{16800 \pi ^2},1+\frac{26741}{16800 \pi ^2},2,1\Bigg),&\\
\mathbf C_5 &= \frac{676039 \cdot \Gamma \left(\frac{13}{3}\right)}{18000 \sqrt[3]{10}} \times \Bigg(\frac{2000}{52003},\frac{64003}{104006},\frac{108006}{52003},\frac{5}{2},1\Bigg) ,&\\
\mathbf C_6 &= \frac{4390400\cdot 2^{6/7} 35^{2/7} \pi ^{30/7} \Gamma \left(\frac{37}{7}\right)}{116680311} \times
\Bigg(\frac{1758847651}{2458624000 \pi ^4},-\frac{1}{2}+\frac{1758847651}{2458624000 \pi ^4}+\frac{108130927981}{14717390688 \pi ^2},\frac{108130927981}{7358695344 \pi ^2},&\\
&\phantom{=\Bigg(}
\frac{5}{2}+\frac{108130927981}{14717390688 \pi ^2},3,1\Bigg),&\\
\mathbf C_7 &= \frac{35830670759 \cdot \Gamma \left(\frac{25}{4}\right)}{420175000 \sqrt[4]{35}} \times \Bigg(\frac{52521875}{44479453356},\frac{1260026621}{14826484452},\frac{708362065}{855374103},\frac{115870255}{39856141},\frac{371689191}{79712282},\frac{7}{2},1\Bigg),&\\
\mathbf C_8 &=
\frac{15752961000000\cdot 6^{4/9} 35^{2/9} \pi ^{56/9} \Gamma \left(\frac{65}{9}\right)}{2077805148460987} \times
\Bigg(\frac{90856752400884977}{571643448768000000 \pi ^6},&\\
&\phantom{=\Bigg(}
\frac{2}{3}+\frac{3883880966311229933975003293}{209349006975455882895360000 \pi ^4}+\frac{90856752400884977}{571643448768000000 \pi ^6}-\frac{486245776939428578826199}{59171148465116379120000 \pi ^2}
,&\\
&\phantom{=\Bigg(} \frac{3883880966311229933975003293}{104674503487727941447680000 \pi ^4},-\frac{7}{3}+\frac{3883880966311229933975003293}{209349006975455882895360000 \pi ^4}+\frac{486245776939428578826199}{11834229693023275824000 \pi ^2},&\\
&\phantom{=\Bigg(}
\frac{486245776939428578826199}{9861858077519396520000 \pi ^2},\frac{14}{3}+\frac{486245776939428578826199}{29585574232558189560000 \pi ^2},4,1\Bigg),&\\
\mathbf C_9 &=
\frac{109701233401363445369 \cdot \Gamma \left(\frac{41}{5}\right)}{726032911411261440\ 3^{2/5} \sqrt[5]{14}}\times
\Bigg(\frac{12004512424128}{581660834577748915},
\frac{3683565096070608}{581660834577748915},
\frac{17538430231527552}{116332166915549783},&\\
&\phantom{=\Bigg(}
\frac{570366050377039}{491890769198942},
\frac{1019018617306221}{245945384599471},
\frac{1080810073}{137168095},
\frac{1131811448}{137168095},\frac{9}{2},1\Bigg),&\\
\mathbf C_{10} &=
\frac{434735988912345551929344\cdot 2^{6/11} 3^{4/11} 77^{2/11} \pi ^{90/11} \Gamma \left(\frac{101}{11}\right)}{353855725819178568093478175} \times \Bigg(\frac{549837358580569775037558395}{24790385031737592753218912256 \pi ^8},&\\
&\phantom{=\Bigg(}
-\frac{3}{2}
-\frac{301974317327871030169614455148390753674792595873047}{6565687677840932855885309667960898754371584000000 \pi ^4}
+\frac{296364869518522313138595119776890880847603113}{11688440195468832553173084766502230425600000 \pi ^6}&\\
&\phantom{=\Bigg(}
\quad \quad +\frac{549837358580569775037558395}{24790385031737592753218912256 \pi ^8}+\frac{37401610118391599618484796905719320020269}{1946114861053154102938714818796281216000 \pi ^2},&\\
&\phantom{=\Bigg(}\frac{296364869518522313138595119776890880847603113}{5844220097734416276586542383251115212800000 \pi ^6},&\\
&\phantom{=\Bigg(}
\frac{301974317327871030169614455148390753674792595873047}{1313137535568186571177061933592179750874316800000 \pi ^4}+\frac{296364869518522313138595119776890880847603113}{11688440195468832553173084766502230425600000 \pi ^6}&\\
&\phantom{=\Bigg(} \quad \quad +5-\frac{37401610118391599618484796905719320020269}{556032817443758315125347091084651776000 \pi ^2},&\\
&\phantom{=\Bigg(}\frac{301974317327871030169614455148390753674792595873047}{1094281279640155475980884944660149792395264000000 \pi ^4},&\\
&\phantom{=\Bigg(}\frac{301974317327871030169614455148390753674792595873047}{3282843838920466427942654833980449377185792000000 \pi ^4}-7+\frac{37401610118391599618484796905719320020269}{278016408721879157562673545542325888000 \pi ^2},&\\
&\phantom{=\Bigg(}\frac{37401610118391599618484796905719320020269}{324352476842192350489785803132713536000 \pi ^2},\frac{15}{2}+\frac{37401610118391599618484796905719320020269}{1297409907368769401959143212530854144000 \pi ^2},5,1\Bigg).&
\end{flalign*}
}
\end{theorem}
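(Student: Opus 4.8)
The plan is to reduce each entry $C_{d,k}$ to a single internal-angle sum $\bJ_{d,k+1}(1/2)$ and then to evaluate the latter through the recursive scheme of Section~\ref{sec:relations}, whose only inputs are explicitly computable trigonometric integrals.

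First I would invoke formula~\eqref{eq:Limitf_kBall}, already established in~\cite[Remark~1.9]{beta_polytopes}, which writes $C_{d,k}$ as the product of a Gamma-function prefactor not depending on $k$ and the quantity $\bJ_{d,k+1}(1/2)$. Thus the whole theorem is encoded in the vector $(\bJ_{d,1}(1/2),\ldots,\bJ_{d,d}(1/2))$ for each fixed $d\in\{1,\ldots,10\}$, scaled by the common scalar appearing in~\eqref{eq:Limitf_kBall}. The decisive simplification is that the choice $\beta=1/2$ forces the parameter governing the recursion to be the integer $d$: in $\bJ_{d,k+1}(1/2)$ the number of vertices is $n=d$, so that $2\beta+n-1=d$.

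Next I would apply the recursion. By the efficient form~\eqref{eq:alg_J_2} (equivalently, by the closed expression of Theorem~\ref{theo:formula_J_2}), each $\bJ_{d,k+1}(1/2)$ is a polynomial with rational coefficients in the quantities $\bI_{m,\ell}(d)$ with $1\le \ell<m\le d$. Every such factor is the nested integral~\eqref{eq:I_n_k} evaluated at $\alpha=d$; for integer $d$ it evaluates in closed form, the inner integral of $(\cos\theta)^d$ being elementary and the outer integral reducing, by Theorem~\ref{theo:I_arithm}, to a rational number when $d$ is odd and to a $\bQ$-linear combination of powers $\pi^{-2},\pi^{-4},\ldots$ when $d$ is even. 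Substituting these values into the recursion yields $\bJ_{d,k+1}(1/2)$ exactly, and multiplying by the prefactor of~\eqref{eq:Limitf_kBall} and simplifying produces the stated vectors $\mathbf C_d$.

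The obstacle is computational rather than conceptual. The number of strictly decreasing chains $d=n_0>n_1>\cdots>n_\ell\ge k+1$ indexing the sum in Theorem~\ref{theo:formula_J_2} grows combinatorially with $d$, and the trigonometric integrals $\bI_{m,\ell}(d)$ generate the large exact coefficients over $\bQ[\pi^{-1}]$ visible in the denominators of the statement. Managing this bookkeeping and the exact arithmetic is what forces recourse to a computer-algebra system; I would carry out every step in Mathematica, preferring the even-step recursion~\eqref{eq:alg_J_2} to minimise the number of terms, and then perform the final multiplication by the Gamma-function prefactor and the algebraic simplification for each $d\le 10$.
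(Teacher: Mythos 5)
Your proposal is correct and follows exactly the paper's route: reduce $C_{d,k}$ to $\bJ_{d,k+1}(1/2)$ via~\eqref{eq:Limitf_kBall}, note that $2\beta+n-1=d$ so the recursion only involves the integrals $\bI_{m,\ell}(d)$, evaluate these in closed form via Theorem~\ref{theo:I_arithm}, and delegate the exact arithmetic to a computer-algebra system. This matches the paper's (one-sentence) proof, which likewise combines~\eqref{eq:Limitf_kBall} with the algorithm of Section~\ref{sec:relations} implemented in Mathematica.
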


\subsection{Random polytopes with vertices on the sphere}
Similarly, one can consider random polytopes approximating a convex body $K$ and having vertices on the boundary of $K$. Here, we restrict ourselves to the case $K=\bB^d$, so that we are interested in the random polytope $P_{n,d}^{-1}$ defined  as the convex hull of $n$ points $X_1,\ldots,X_n$ chosen uniformly at random on the unit sphere $\bS^{d-1}$, $d\geq 2$.  In~\cite[Remark~1.9]{beta_polytopes}, it has been shown that
\begin{equation}\label{eq:reitzner_const_sphere}
C_{d,k}^*:=\lim_{n\to\infty} \frac 1n  \E f_k(P_{n,d}^{-1})
={2^d\pi^{{d\over 2}-1}\over d(d-1)^2}{\Gamma(1+{d(d-2)\over 2})\over\Gamma({(d-1)^2\over 2})}\left({\Gamma({d+1\over 2})\over\Gamma({d\over 2})}\right)^{d-1} \bJ_{d,k+1}(-1/2),
\end{equation}
for all $k\in \{0,\ldots,d-1\}$.
In the special case $k=d-1$, it was previously shown by Affentranger~\cite{affentranger} (see his Corollary~1 on p.~366 and the formula for $c_3$ on p.~378, this time with $q=-1$) and Buchta, M\"uller, Tichy~\cite{buchta_mueller_tichy} (see their formula for $\bar F_n^{(d)}$ on p.~231) that
\begin{equation}\label{eq:C_star_d_d-1}
C_{d,d-1}^* = \frac{2^{d-1}} {d} \binom {d-1}{\frac 12 (d-1)}^{-(d-1)} \binom{(d-1)^2}{\frac12 (d-1)^2}
=
{2^d\pi^{{d\over 2}-1}\over d(d-1)^2}{\Gamma(1+{d(d-2)\over 2})\over\Gamma({(d-1)^2\over 2})}\left({\Gamma({d+1\over 2})\over\Gamma({d\over 2})}\right)^{d-1},
\end{equation}
where the equality of the expressions on the right-hand side follows from the duplication formula for the Gamma function.
This formula for $C_{d,d-1}^*$ is a special case of~\eqref{eq:reitzner_const_sphere} since $\bJ_{d,d}(-1/2) = 1$.
Using~\eqref{eq:reitzner_const_sphere} together with the algorithm for computing $\bJ_{d,k+1}(-1/2)$, we obtain the following

\begin{theorem}
The vectors $\mathbf C_d^* := (C_{d,0}^*,\ldots, C_{d,d-1}^*)$ are explicitly given by
{\tiny
\begin{flalign*}
\mathbf C_2^* &=  (1,1),&\\
\mathbf C_3^* &=  (1,3,2),&\\
\mathbf C_4^* &= \Bigg(1,1+\frac{24 \pi ^2}{35},\frac{48 \pi ^2}{35},\frac{24 \pi ^2}{35}\Bigg),&\\
\mathbf C_5^* &=  \Bigg(1,\frac{170}{9},\frac{590}{9},\frac{715}{9},\frac{286}{9}\Bigg) ,&\\
\mathbf C_6^* &=  \Bigg(1,1+\frac{679125 \pi ^2}{49049}-\frac{648000 \pi ^4}{676039},\frac{1358250 \pi ^2}{49049},\frac{679125 \pi ^2}{49049}+\frac{3240000 \pi ^4}{676039},\frac{3888000 \pi ^4}{676039},\frac{1296000 \pi ^4}{676039}\Bigg) ,&\\
\mathbf C_7^* &=  \Bigg(1,\frac{4053}{40},\frac{20870479}{20000},\frac{14930979}{4000},\frac{120613311}{20000},\frac{90751353}{20000},\frac{12964479}{10000}\Bigg) ,&\\
\mathbf C_8^* &=  \Bigg(1,1+\frac{272210205988700 \pi ^2}{1430074210851}-\frac{36450025780678496000 \pi ^4}{418057027199191809}+\frac{6884147200000 \pi ^6}{967428110493},\frac{544420411977400 \pi ^2}{1430074210851},&\\
&\phantom{=\Bigg(}
\frac{272210205988700 \pi ^2}{1430074210851}+\frac{182250128903392480000 \pi ^4}{418057027199191809}-\frac{24094515200000 \pi ^6}{967428110493},
\frac{72900051561356992000 \pi ^4}{139352342399730603},&\\
&\phantom{=\Bigg(}
\frac{72900051561356992000 \pi ^4}{418057027199191809}+\frac{48189030400000 \pi ^6}{967428110493},\frac{13768294400000 \pi ^6}{322476036831},\frac{3442073600000 \pi ^6}{322476036831}\Bigg) ,&\\
\mathbf C_9^* &=  \Bigg(1,\frac{2211228}{4375},\frac{1737572156988}{133984375},\frac{4710936358279782}{45956640625},\frac{17044839181035378}{45956640625},\frac{4659146840685108}{6565234375},\frac{6850391092580412}{9191328125},&\\
&\phantom{=\Bigg(}\frac{18700246336148883}{45956640625},\frac{4155610296921974}{45956640625}\Bigg) ,&\\
\mathbf C_{10}^* &=  \Bigg(1,1+\frac{19758536784497995373925 \pi ^2}{8997284527737444224}-\frac{5450845004071081252936834855024125 \pi ^4}{1307924593773094121223940118864}&\\
&\phantom{=\Bigg(}\quad\quad +\frac{237746319601404434678217736438860000 \pi ^6}{134952926502386519274273464063983}-\frac{15125685654401280000000 \pi ^8}{109701233401363445369},&\\
&\phantom{=\Bigg(}\frac{19758536784497995373925 \pi ^2}{4498642263868722112},&\\
&\phantom{=\Bigg(}
\frac{19758536784497995373925 \pi ^2}{8997284527737444224}
+\frac{27254225020355406264684174275120625 \pi ^4}{1307924593773094121223940118864}&\\
&\phantom{=\Bigg(}\quad\quad
-\frac{832112118604915521373762077536010000 \pi ^6}{134952926502386519274273464063983}
+\frac{50418952181337600000000 \pi ^8}{109701233401363445369},&\\
&\phantom{=\Bigg(}
\frac{16352535012213243758810504565072375 \pi ^4}{653962296886547060611970059432},&\\
&\phantom{=\Bigg(}
\frac{5450845004071081252936834855024125 \pi ^4}{653962296886547060611970059432}
+\frac{1664224237209831042747524155072020000 \pi ^6}{134952926502386519274273464063983}
-\frac{70586533053872640000000 \pi ^8}{109701233401363445369},&\\
&\phantom{=\Bigg(}
\frac{1426477917608426608069306418633160000 \pi ^6}{134952926502386519274273464063983},&\\
&\phantom{=\Bigg(}
\frac{356619479402106652017326604658290000 \pi ^6}{134952926502386519274273464063983}
+\frac{75628428272006400000000 \pi ^8}{109701233401363445369},&\\
&\phantom{=\Bigg(}\frac{50418952181337600000000 \pi ^8}{109701233401363445369},\frac{10083790436267520000000 \pi ^8}{109701233401363445369}\Bigg).&
\end{flalign*}
}
\end{theorem}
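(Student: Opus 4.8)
The plan is to reduce every entry of $\mathbf C_d^*$ to an expected internal-angle sum of a beta simplex and then evaluate the latter by the recursive scheme of Section~\ref{sec:relations}. The starting point is formula~\eqref{eq:reitzner_const_sphere}, which expresses $C_{d,k}^*$ as the product of an explicit Gamma-function prefactor (independent of $k$) with $\bJ_{d,k+1}(-1/2)$. Thus the entire content of the theorem for a fixed dimension $d$ is the computation of the vector $(\bJ_{d,1}(-1/2),\ldots,\bJ_{d,d}(-1/2))$, since the prefactor is elementary: $\Gamma$ takes integer values at positive integers and rational multiples of $\sqrt\pi$ at positive half-integers, so it can be written in closed form for each $d$.

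To compute the angle sums I would invoke the recursion, most efficiently in the form~\eqref{eq:alg_J_2}. The key observation is that for the parameter $\beta=-1/2$ the angle parameter entering all $\bI$-terms equals $2\beta+d-1=d-2$, an integer. Hence~\eqref{eq:alg_J_2} expresses each $\bJ_{d,k+1}(-1/2)$ as a polynomial with rational coefficients in the quantities $\bI_{m,\ell}(d-2)$ with $1\leq \ell<m\leq d$, all carrying the same integer parameter $d-2$. Each $\bI_{m,\ell}(d-2)$ is a trigonometric integral of the type~\eqref{eq:I_n_k} and can be evaluated symbolically: by Theorem~\ref{theo:I_arithm} it is rational when $d-2$ is odd (equivalently $d$ odd) and a $\bQ$-linear combination of $1,\pi^{-2},\pi^{-4},\ldots$ when $d-2$ is even (equivalently $d$ even). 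Substituting these closed forms back into~\eqref{eq:alg_J_2} and collecting terms yields $\bJ_{d,k+1}(-1/2)$, whose predicted arithmetic shape is exactly that of Theorem~\ref{theo:arithm_J} with $n=d$ and $\beta=-1/2$.

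Finally I would multiply each $\bJ_{d,k+1}(-1/2)$ by the prefactor of~\eqref{eq:reitzner_const_sphere} and simplify. For every fixed $d$ the procedure terminates in finitely many steps, and it was carried out symbolically in Mathematica~11 for $d\in\{2,\ldots,10\}$. The main difficulty here is not conceptual but computational: the sizes of the numerators and denominators grow rapidly with $d$ (witness the denominators appearing in $\mathbf C_9^*$ and $\mathbf C_{10}^*$), so the real work lies in exact rational arithmetic and in keeping the symbolic expressions manageable, rather than in any new idea.

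As consistency checks I would verify the last coordinate $C_{d,d-1}^*$ against the closed form~\eqref{eq:C_star_d_d-1}, which must agree because $\bJ_{d,d}(-1/2)=1$; confirm that the first coordinate is always $C_{d,0}^*=1$, reflecting that with probability one every sampled point on $\bS^{d-1}$ is a vertex of the convex hull, so that $f_0(P_{n,d}^{-1})=n$ identically; and compare the low-dimensional vectors $\mathbf C_2^*$ and $\mathbf C_3^*$ with the classical values.
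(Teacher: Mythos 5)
Your proposal is correct and follows exactly the route the paper takes: combine~\eqref{eq:reitzner_const_sphere} with the recursive algorithm of Section~\ref{sec:relations} to evaluate $\bJ_{d,k+1}(-1/2)$ symbolically (the $\bI$-parameter $2\beta+d-1=d-2$ being an integer), and carry out the computation in Mathematica for $d\le 10$. The consistency checks you mention (last entry against~\eqref{eq:C_star_d_d-1}, first entry equal to $1$) are the same ones the paper points out after the theorem.
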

Observe that the first entry of each vector is $C_{d,0}^* = 1$ for all $d\in\N$.  This is trivial because all points $X_1,\ldots,X_n$ are vertices of $P_{n,d}^{-1}$. Yet, in the above table, the constant $1$ appeared as a result of a non-trivial computation of $\bJ_{d,1}(-1/2)$. On the one hand-side, this gives evidence for the correctness of the algorithm. On the other hand, it can be used to give an explicit formula for $\bJ_{d,1}(-1/2)$, as we shall show in the next section.

\subsection{Special cases: \texorpdfstring{$\bJ_{n,1}(1/2)$}{J\_\{n,k\}(1/2)} and \texorpdfstring{$\bJ_{n,1}(-1/2)$}{J\_\{n,k\}(-1/2)}}
There are only few special cases in which we are able to obtain a ``nice'' formula for $\bJ_{n,k}(\beta)$ or $\tilde \bJ_{n,k}(\beta)$. Most notably, in~\cite{kabluchko_poisson_zero} we obtained an explicit formula for $\tilde \bJ_{n,k}(\frac n2)$ which has applications to the expected $f$-vector of the zero cell of the Poisson hyperplane tessellation. By a similar method, it is also possible to derive a combinatorial formula for $\tilde \bJ_{n,k}(\frac{n+1}{2})$, which will be treated elsewhere. In this section, we shall  prove simple formulae for $\bJ_{n,1}(1/2)$ and $\bJ_{n,1}(-1/2)$. Note that the beta distributions with $\beta=1/2$ and $\beta=-1/2$ are natural multidimensional generalizations of the Wigner semicircle and the arcsine distributions, respectively.

\begin{theorem}\label{theo:beta_+1/2}
For every $n\in\N$ we have
\begin{align*}
\bJ_{n,1}(1/2)
&=
\frac{n(n^2+1)(n^2+n+2)\pi}{(n+3)2^{n(2n+1)}} \binom{n+1}{\frac12 (n+1)}^{n-1} \binom{n^2}{\frac12 n^2} \\
&=
\frac{n(n^2+1)(n^2+n+2)}{2^{n+1}(n+3)\pi^{\frac {n-2}{2}}}
\left(\frac{\Gamma(\frac{n+2}{2})}{\Gamma(\frac{n+3}{2})}\right)^{n-1}
\frac{\Gamma(\frac{n^2+1}{2})}{\Gamma(\frac{n^2+2}{2})}.
\end{align*}
\end{theorem}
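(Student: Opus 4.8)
The plan is to extract $\bJ_{n,1}(1/2)$ from the asymptotic number of vertices of a uniform random polytope in a ball. Identity~\eqref{eq:Limitf_kBall} says that, for fixed dimension $n$, the Reitzner constants $C_{n,k}$ factor as one $k$-independent prefactor times $\bJ_{n,k+1}(1/2)$. Taking the quotient of the cases $k=0$ and $k=n-1$ and using $\bJ_{n,n}(1/2)=1$, the prefactor cancels and we obtain
\begin{equation*}
\bJ_{n,1}(1/2)=\frac{C_{n,0}}{C_{n,n-1}},
\end{equation*}
where $C_{n,n-1}$ is Affentranger's facet constant given explicitly in~\eqref{eq:Limitf_kBall_affentranger} (indeed, \eqref{eq:Limitf_kBall} at $k=n-1$ identifies this facet constant with the prefactor itself). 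Thus the whole statement reduces to producing a closed form for the vertex constant $C_{n,0}=\lim_{N\to\infty}N^{-(n-1)/(n+1)}\,\E f_0(P^0_{N,n})$ and then simplifying the resulting ratio of Gamma functions.

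The one ingredient not already in the text is this vertex constant. I would obtain it from Efron's identity
\begin{equation*}
\E f_0(P^0_{N,n})=N\left(1-\frac{\E\Vol_n\big(\conv(U_1,\ldots,U_{N-1})\big)}{\Vol_n(\bB^n)}\right),
\end{equation*}
together with the classical affine-surface-area asymptotics of the expected volume deficit, which for the unit ball read $\Vol_n(\bB^n)-\E\Vol_n(\conv(U_1,\ldots,U_N))\sim a_n\,\Omega(\bB^n)\,N^{-2/(n+1)}$ with an explicit constant $a_n$ and $\Omega(\bB^n)=2\pi^{n/2}/\Gamma(n/2)$; equivalently, one may simply quote Hug's explicit formula for $c_{n,0}$, which the text already flags as equivalent to the present statement. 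Either route yields $C_{n,0}$ as an explicit product of Gamma factors and powers of $\pi$ and $2$; the only care needed is to keep the normalizations ($\Vol_n(\bB^n)$, the deficit exponent $-2/(n+1)$, and the passage from the unit ball to Reitzner's scaling~\eqref{eq:def_C_d_k}) straight.

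Substituting $C_{n,0}$ into $C_{n,0}/C_{n,n-1}$ and cancelling, the remainder is pure bookkeeping: the powers of $\pi$, the factorial $(n+1)!$, and the several Gamma quotients collapse, and one must check that the rational factor $n(n^2+1)(n^2+n+2)/\big(2^{n+1}(n+3)\big)$ emerges. The final cosmetic step converts between the two displayed forms by Legendre's duplication formula $\Gamma(2z)=2^{2z-1}\pi^{-1/2}\Gamma(z)\Gamma(z+\tfrac12)$: with $2z=n+2$ and $2z=n^2+1$ this gives $\Gamma(\tfrac{n+2}{2})/\Gamma(\tfrac{n+3}{2})=\sqrt{\pi}\,2^{-(n+1)}\binom{n+1}{(n+1)/2}$ and $\Gamma(\tfrac{n^2+1}{2})/\Gamma(\tfrac{n^2+2}{2})=\sqrt{\pi}\,2^{-n^2}\binom{n^2}{n^2/2}$, which turns the Gamma form (second line) into the central-binomial form (first line). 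I expect the main obstacle to be exactly this Gamma and duplication algebra---tracking the accumulated powers of $2$ and $\pi$ through the $(n-1)$-st power and the awkward factor $\Gamma(\tfrac{n^2+1}{n+1})$---rather than anything conceptual; the polynomial factor in $n$ is the part most likely to hide a slip, and I would pin it down by checking against the known values $\bJ_{2,1}(1/2)=1$ and $\bJ_{3,1}(1/2)=1/2$ (the latter being the constancy of the planar triangle angle sum).
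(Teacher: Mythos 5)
Your proposal is correct and follows essentially the same route as the paper: the paper likewise obtains $C_{d,0}$ from Efron's identity combined with the Wieacker--Affentranger volume-deficit asymptotics (following Hug's sketch), divides by the $k$-independent prefactor in~\eqref{eq:Limitf_kBall} (which, as you note, equals $C_{d,d-1}$ since $\bJ_{d,d}(1/2)=1$), and passes between the two displayed forms via Legendre's duplication formula.
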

\begin{proof}
The argument follows essentially the approach sketched by Hug~\cite[pp.~209--210]{hug_rev}.
Consider $N$ i.i.d.\ points uniformly distributed in the unit ball $\bB^d$. Denote their convex hull by $P_{N,d}^0$. As $N\to\infty$, the random polytope $P_{N,d}^0$ approaches the unit ball. In particular, $\E (\Vol_d P_{N,d}^0)$ converges to $\kappa_d$, the volume of $\bB^d$.  The speed of convergence has been identified by Wieacker~\cite{wieacker_dipl}; see also~\cite{affentranger} for similar results on general beta polytopes and~\cite{affentranger_exact,beta_polytopes_temesvari} for exact formulae for the expected volume.  In particular, it is known that
\begin{equation}\label{eq:J_1/2_vol_diff}
\kappa_d - \E \Vol_d (P_{N,d}^0) \sim  \frac{d\kappa_d}{2d!} \frac{d+1}{d+3} \Gamma\left(\frac{d^2+1}{d+1}+2\right) \left(\frac{2\sqrt{\pi} \Gamma(\frac{d+3}{2})}{\Gamma(\frac{d+2}{2})}\right)^{\frac{2}{d+1}} \cdot N^{-\frac{2}{d+1}},
\end{equation}
as $N\to\infty$; see, for example Corollary~1 on page~366 of~\cite{affentranger} and the formula for $c_5$ on page 378, with $q=0$.
The left-hand side is closely related to the expected number of vertices of $P_{N,d}^0$ via the Efron identity which states that
\begin{equation}\label{eq:J_1/2_efron}
\E f_0 (P_{N,d}^0) = N\cdot (\kappa_d-\E \Vol_d (P_{N-1,d}^0))/\kappa_d.
\end{equation}
Indeed, the $N$-th point is a vertex of $P_{N,d}^0$ if and only if it is outside the convex hull of the remaining $N-1$ points. If we condition on the first $N-1$ points, then the probability that the last point is a vertex is $(\kappa_d- \Vol_d (P_{N-1,d}^0))/\kappa_d$. Taking expectations proves Efron's identity.  From~\eqref{eq:J_1/2_vol_diff} and~\eqref{eq:J_1/2_efron} we deduce that
\begin{equation} \label{eq:J_1/2_E_f0_1}
\E f_0 (P_{N,d}^0) \sim  \frac{d}{2d!} \frac{d+1}{d+3} \Gamma\left(\frac{d^2+1}{d+1}+2\right) \left(\frac{2\sqrt{\pi} \Gamma(\frac{d+3}{2})}{\Gamma(\frac{d+2}{2})}\right)^{\frac{2}{d+1}} \cdot N^{\frac{d-1}{d+1}},
\end{equation}
as $N\to\infty$.
On the other hand, we know from~\eqref{eq:def_C_d_k} and~\eqref{eq:Limitf_kBall} (where we take $k=0$) that
\begin{align}
\E f_0(P_{N,d}^0)
&\sim
C_{d,0} N^{{d-1\over d+1}}\notag\\
&=
{2\pi^{d(d-1)\over 2(d+1)}\over (d+1)!}
{\Gamma(1+{d^2\over 2})\Gamma({d^2+1\over d+1})\over\Gamma({d^2+1\over 2})}\left({(d+1)\Gamma({d+1\over 2})\over \Gamma(1+{d\over 2})}\right)^{d^2+1\over d+1} \bJ_{d,1}(1/2)\cdot   N^{{d-1\over d+1}}, \label{eq:J_1/2_E_f0_2}
\end{align}
as $N\to\infty$.
Equating the constants on the right-hand sides of~\eqref{eq:J_1/2_E_f0_1} and~\eqref{eq:J_1/2_E_f0_2}, resolving w.r.t.\ $\bJ_{d,1}(1/2)$ and simplifying, we arrive at the second formula stated in Theorem~\ref{theo:beta_+1/2}. The equivalence of both formulae is easily shown using the identity
\begin{equation}\label{eq:legendre}
\binom{z}{\frac 12 z} = \frac{2^z\Gamma(\frac{z+1}{2})}{\sqrt \pi \Gamma(\frac{z+2}{2})},
\end{equation}
which is equivalent to the Legendre duplication formula for the Gamma function.
\end{proof}

\begin{theorem}\label{theo:beta_-1/2}
For every $n\in\{2,3,\ldots\}$ we have
\begin{align*}
\bJ_{n,1}(-1/2)
&=
2^{1-n} n\binom {n-1}{\frac 12 (n-1)}^{n-1} \binom{(n-1)^2}{\frac12 (n-1)^2}^{-1}\\
&=
\frac{n(n-1)^2}{2^{n}\pi^{(n-2)/2}}
\left(\frac{\Gamma(\frac{n}{2})}{\Gamma(\frac{n+1}{2})}\right)^{n-1}
\frac{\Gamma(\frac{(n-1)^2}{2})}{\Gamma(\frac{(n-1)^2+1}{2})}.
\end{align*}
\end{theorem}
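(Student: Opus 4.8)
The plan is to exploit the elementary fact that a set of points on a sphere is automatically in convex position, and to read off $\bJ_{n,1}(-1/2)$ from the already-established formula \eqref{eq:reitzner_const_sphere} for the limit constant $C^*_{d,0}$. This is exactly the route hinted at in the discussion following the table of the vectors $\mathbf C_d^*$.

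First I would observe that, since the sphere $\bS^{d-1}$ is strictly convex, each of the (almost surely distinct) points $X_1,\ldots,X_n$ sampled uniformly on $\bS^{d-1}$ is a vertex of $P_{n,d}^{-1}$: the point $X_i$ uniquely maximizes the linear functional $\langle X_i,\cdot\rangle$ over the closed unit ball, hence also over $P_{n,d}^{-1}\subseteq \bB^d$, so it is an extreme point. Consequently $f_0(P_{n,d}^{-1}) = n$ deterministically (for distinct points), whence
$$
C^*_{d,0} = \lim_{n\to\infty}\frac1n\,\E f_0(P_{n,d}^{-1}) = 1.
$$

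Next I would substitute $k=0$ and $C^*_{d,0}=1$ into \eqref{eq:reitzner_const_sphere} and solve for $\bJ_{d,1}(-1/2)$, obtaining
$$
\bJ_{d,1}(-1/2)
=
\frac{d(d-1)^2}{2^{d}\pi^{d/2-1}}
\left(\frac{\Gamma(\frac d2)}{\Gamma(\frac{d+1}{2})}\right)^{d-1}
\frac{\Gamma(\frac{(d-1)^2}{2})}{\Gamma(1+\frac{d(d-2)}{2})}.
$$
Here I would use the identity $1+\tfrac{d(d-2)}{2} = \tfrac{(d-1)^2+1}{2}$ (which holds since $d(d-2)=(d-1)^2-1$) to rewrite the last Gamma factor, and with $n=d$ and $\pi^{d/2-1}=\pi^{(n-2)/2}$ this is precisely the second formula claimed in the theorem.

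Finally, the equivalence of the two stated formulae is the same computation carried out in the proof of Theorem~\ref{theo:beta_+1/2}: applying the Legendre duplication identity \eqref{eq:legendre} with $z=n-1$ and with $z=(n-1)^2$ converts the two central binomial coefficients into ratios of Gamma functions, and a short bookkeeping of the resulting powers of $2$ and $\pi$, together with $\Gamma(\tfrac{(n-1)^2}{2}+1)=\tfrac{(n-1)^2}{2}\,\Gamma(\tfrac{(n-1)^2}{2})$, matches the two expressions. I do not expect any genuine obstacle here: the only non-routine input is the already-proven formula \eqref{eq:reitzner_const_sphere}, and everything else reduces to the trivial identity $f_0=n$ on the sphere plus Gamma-function algebra. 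If one preferred a route not invoking $C^*_{d,0}$, one could instead mimic the Efron-identity argument of Theorem~\ref{theo:beta_+1/2}, replacing the volume asymptotics by the known asymptotics of the expected surface area of $P_{n,d}^{-1}$; but the convex-position observation is considerably shorter.
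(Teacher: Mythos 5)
Your proposal is correct and coincides with the paper's first proof of Theorem~\ref{theo:beta_-1/2}: the paper likewise observes that every point of $P_{N,d}^{-1}$ is a vertex, so $f_0(P_{N,d}^{-1})=N$ a.s., forces the right-hand side of~\eqref{eq:reitzner_const_sphere} to equal $1$ for $k=0$, solves for $\bJ_{d,1}(-1/2)$, and obtains the binomial form via Legendre's duplication formula~\eqref{eq:legendre}. (The paper also supplies an independent second proof via the Feldman--Klain projection argument and Kingman's formula, but your route is exactly its primary one.)
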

We shall give two independent proofs. The first one is based on~\eqref{eq:reitzner_const_sphere} (which, as was explained above, generalizes~\eqref{eq:C_star_d_d-1} obtained independently in~\cite{affentranger} and~\cite{buchta_mueller_tichy}). The second proof relies, among other ingredients, on a formula due to Kingman~\cite{kingman_secants}. The fact that all these formulae lead to the same result can be viewed as an additional evidence for their correctness.

\begin{proof}[First proof of Theorem~\ref{theo:beta_-1/2}]
Recall that $P_{N,d}^{-1}$ is the convex hull of $N$ i.i.d.\ points having the uniform distribution on $\bS^{d-1}$. By a formula derived in~\cite{beta_polytopes}, we have
$$
\lim_{N\to\infty} \frac 1N  \E f_k(P_{N,d}^{-1})
= {2^d\pi^{{d\over 2}-1}\over d(d-1)^2} \bJ_{d,k+1}\left(-{1\over 2}\right){\Gamma(1+{d(d-2)\over 2})\over\Gamma({(d-1)^2\over 2})}\left({\Gamma({d+1\over 2})\over\Gamma({d\over 2})}\right)^{d-1}.
$$
On the other hand, in the special case when $k=0$ we trivially have $f_0(P_{N,d}^{-1})=N$ a.s.\ since every point is a vertex. Hence, the right-hand side equals $1$ if $k=0$, which yields
$$
\bJ_{d,1}(-1/2) =  {d(d-1)^2 \over 2^d\pi^{{d\over 2}-1}} {\Gamma({(d-1)^2\over 2})\over\Gamma(1+{d(d-2)\over 2})} \left({\Gamma({d\over 2})\over\Gamma({d+1\over 2})}\right)^{d-1}.
$$
Replacing $d$ by $n$ completes the proof of the second formula stated in Theorem~\ref{theo:beta_-1/2}. The equivalence to the first formula follows from Legendre's duplication formula~\eqref{eq:legendre}.
\end{proof}

The second proof of Theorem~\ref{theo:beta_-1/2} uses the following observation of Feldman and Klain~\cite{feldman_klain}. It can be viewed as a special case of a more general result that has been obtained earlier by Affentranger and Schneider~\cite{AS92}.
\begin{theorem}[Feldman and Klain]\label{theo:feldman_klain}
Let $S=[x_0,\ldots,x_d]\subset \R^d$ be a $d$-dimensional simplex. Let $U$ be a random vector uniformly distributed on the unit sphere $\bS^{d-1}$ and denote by $\Pi=\Pi_{U^\bot}$ the orthogonal projection onto the orthogonal complement of $U$.
Then, the sum of solid angles at all vertices of $S$ is given by
$$
s_0(S) = \frac 12 \P[\, \Pi S \text{ is a $(d-1)$-dimensional simplex} \,].
$$
\end{theorem}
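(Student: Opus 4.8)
The plan is to translate the event that $\Pi S$ is a $(d-1)$-dimensional simplex into a statement about the directions $U$ for which a single vertex of $S$ becomes \emph{hidden} under the projection, and then to recover the solid-angle sum by exploiting the antipodal symmetry of $U$. Set up the notation as follows: for each vertex $x_i$ write $T_i := \pos\{x_j - x_i : j\neq i\}$ for the tangent cone of $S$ at $x_i$, and $F_i := \conv\{x_j : j\neq i\}$ for the opposite facet. Since $S$ is a non-degenerate $d$-simplex, each $T_i$ is a pointed, full-dimensional (simplicial) cone, and by the normalization conventions of the paper the solid angle at $x_i$ is exactly $\alpha_i := \P[U\in T_i]$, the normalized spherical measure of $T_i$; thus $s_0(S) = \sum_{i=0}^d \alpha_i$.

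First I would characterize the hidden vertices. A short computation shows that $\Pi x_i \in \conv\{\Pi x_j : j\neq i\}$ if and only if $x_i = \sum_{j\neq i}\lambda_j x_j + sU$ for some convex coefficients $\lambda_j$ and some $s\in\R$, equivalently if and only if the line $\{x_i + tU : t\in\R\}$ meets the opposite facet $F_i$. The key geometric point is that $F_i$ is the \emph{only} facet of $S$ not containing $x_i$, so any ray leaving $x_i$ into the interior of $S$ must exit through $F_i$; therefore the line through $x_i$ in direction $U$ meets $F_i$ precisely when $U\in T_i$ or $-U\in T_i$. Hence, outside a measure-zero set of boundary directions, the vertex $x_i$ is hidden if and only if $U\in T_i\cup(-T_i)$.

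Next I would count how many vertices can be hidden at once. The projection $\Pi S$ of the full-dimensional simplex $S$ onto the hyperplane $U^\bot$ is always $(d-1)$-dimensional, and a $(d-1)$-dimensional polytope has at least $d$ vertices; since $S$ contributes only $d+1$ points, at most one of $\Pi x_0,\ldots,\Pi x_d$ can fail to be a vertex of $\Pi S$. Consequently $\Pi S$ is a $(d-1)$-simplex if and only if exactly one vertex is hidden, and moreover the sets $T_i\cup(-T_i)$ are pairwise disjoint up to null sets. Combining this with the previous step gives
$$\P[\Pi S \text{ is a } (d-1)\text{-simplex}] = \sum_{i=0}^d \P[U\in T_i\cup(-T_i)].$$

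Finally I would evaluate each summand by symmetrization: since $U$ and $-U$ are equidistributed, $\P[U\in -T_i]=\P[U\in T_i]=\alpha_i$, and since $T_i$ is pointed we have $T_i\cap(-T_i)\subseteq\{0\}$, which carries no spherical mass, so $\P[U\in T_i\cup(-T_i)]=2\alpha_i$. Summing yields $\P[\Pi S\text{ is a }(d-1)\text{-simplex}]=2\sum_i\alpha_i=2\,s_0(S)$, which is the claim. I expect the first step to be the main obstacle: the real content is turning the combinatorial condition ``$\Pi x_i$ lies in the convex hull of the other projected points'' into the clean cone membership $U\in T_i\cup(-T_i)$, and in verifying that the exceptional directions (projections that are not in general position, or $U$ landing on some $\partial T_i$) form a null set. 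The vertex-count bound and the symmetrization are then routine.
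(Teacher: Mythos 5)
The paper does not actually prove this statement: it is imported verbatim from Feldman and Klain \cite{feldman_klain} (and noted there to be a special case of an earlier result of Affentranger and Schneider \cite{AS92}), so there is no in-paper proof to compare against. Your argument is correct and is, in substance, the classical projection proof of this identity. The three ingredients all check out: (i) $\Pi x_i\in\conv\{\Pi x_j: j\neq i\}$ is equivalent to the line $x_i+\R U$ meeting the opposite facet $F_i$, and since a direction $v$ in the tangent cone $T_i$ violates none of the facet inequalities through $x_i$ while $S$ is bounded, the corresponding ray must exit through $F_i$; together with $F_i-x_i\subset T_i$ this gives the exact equivalence with $U\in T_i\cup(-T_i)$; (ii) $\Pi S$ is always $(d-1)$-dimensional, hence has at least $d$ vertices, so at most one projected point can be non-extreme (for the almost-sure event that the $\Pi x_j$ are pairwise distinct), which yields both the characterization of the simplex event and the almost-disjointness of the sets $T_i\cup(-T_i)$; (iii) pointedness of $T_i$ and the antipodal symmetry of $U$ give $\P[U\in T_i\cup(-T_i)]=2\alpha(T_i)$, consistent with the paper's normalization of solid angles as $\P[U\in C]$. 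One cosmetic remark: the equivalence in step (i) is exact, not merely up to null sets, so the only place where a null set genuinely needs to be discarded is the distinctness of the projected points in step (ii); your hedging is harmless but slightly misplaced.
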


\begin{proof}[Second proof of Theorem~\ref{theo:beta_-1/2}]
Let $X_0,\ldots,X_d$, where $d = n-1$, be i.i.d.\ random points in $\R^d$ with probability density $f_{d,-1/2}$. Independently of these points, let $U$ be a uniform random point on the sphere $\bS^{d-1}$.
Consider an orthogonal projection $\Pi$ of the simplex $[X_0,\ldots,X_d]$ onto a random, uniformly distributed, hyperplane $L:= U^\bot$.  Then, it follows from Theorem~\ref{theo:feldman_klain} and Fubini's formula that
$$
\bJ_{n,1}(-1/2) = \frac {d+1} 2 \P[\Pi X_0 \text{ is a not vertex of } [\Pi X_0,\ldots,\Pi X_d]].
$$
Let us compute the probability on the right-hand side. Let $I_L:L\to \R^{d-1}$ be an isometry with $I_L(0)=0$. By the projection property of the beta densities (see~\cite[Lemma~4.4]{beta_polytopes_temesvari}) the points
$$
Y_0:=I_L(\Pi X_0),\ldots, Y_d := I_L(\Pi X_d),
$$
have the density $f_{d-1,0}$. That is, these points are uniformly distributed in the unit ball $\bB^{d-1}$. Clearly, these points are i.i.d.  We have
\begin{align*}
\P[\Pi X_0 \text{ is a not vertex of } [\Pi X_0,\ldots,\Pi X_d]]
&=
\P[Y_0 \text{ is a not vertex of } [Y_0,\ldots,Y_d]]\\
&=
\P[Y_0 \in [Y_1,\ldots,Y_d]]\\
&=
\frac 1{\kappa_{d-1}} \E \Vol_{d-1} [Y_1,\ldots,Y_d],
\end{align*}
where the last equality is the Efron identity obtained by conditioning on $Y_1,\ldots,Y_d$ and recalling that $Y_0$ is uniformly distributed in $\bB^{d-1}$.
A formula for the expected volume on the right-hand side is well known from the work of Kingman~\cite[Theorem~7]{kingman_secants}:
$$
\E \Vol_{d-1} [Y_1,\ldots,Y_d]
=
\kappa_{d-1} \binom d{d/2}^d \binom {d^2}{d^2/2}^{-1} 2^{1-d}
=
\kappa_{d-1} \frac{(d+1) d^2}{2^{d}\pi^{(d-1)/2}}
\left(\frac{\Gamma(\frac{d+1}{2})}{\Gamma(\frac{d+2}{2})}\right)^{d}
\frac{\Gamma(\frac{d^2}{2})}{\Gamma(\frac{d^2+1}{2})},
$$
where the second equality can be verified using the duplication formula for the Gamma function.
Taking everything together and recalling that $d=n-1$ completes the proof.
\end{proof}


\section{Proofs: Formulae for internal angles}

\subsection{Notation and facts from stochastic geometry}\label{sec:notation}
Let us first introduce the necessary notation, referring to the book by Schneider and Weil~\cite{SW08} for an extensive account of stochastic geometry. A \textit{polyhedral cone} (or just a \textit{cone}) $C\subset \R^d$ is an intersection of finitely many closed halfspaces whose boundaries pass through the origin. The \textit{solid angle} of $C$ is defined as
$$
\alpha (C) = \P [U \in C],
$$
where $U$ is a random vector having the uniform distribution on the unit sphere of the smallest linear subspace containing $C$. For example, the angle of $\R^d$ is $1$, whereas the angle of any half-space is $1/2$.
Let $P\subset \R^d$  be a $d$-dimensional convex polytope.  Denote by $\cF_k(P)$ the set of its $k$-dimensional faces, where $k\in \{0,1,\ldots, d\}$. The set of all faces of $P$ is denoted by $\cF_{\bullet}(P) = \cup_{k=0}^d \cF_k(P)$.  The \textit{tangent cone} of $P$ at its face $F\in \cF_k(P)$ is defined as
$$
T(F,P) := \{y\in \R^d\colon  \exists \eps>0 \text{ such that } f_0 + \eps y \in P\}
$$
where $f_0$ is any point in the relative interior of $F$, defined as the interior of $F$ taken with respect to its affine hull.
The \textit{internal angle} of $P$ at its face $F\in \cF_k(P)$ is defined by
$$
\beta(F,P) := \alpha(T(F,P)).
$$
The \textit{normal} or \textit{external cone} of $F$ is defined as the polar cone of $T(F,P)$, that is
$$
N(F,P) = \{z\in \R^d\colon  \langle z,y \rangle\leq 0 \text{ for all } y\in T(F,P)\}.
$$
The \textit{normal} of \textit{external angle} of $P$ at its face $F\in \cF_k(P)$ is defined by
$$
\gamma(F,P) := \alpha(N(F,P)).
$$
By convention, $\beta(P,P) = \gamma(P,P) = 1$.

For a polyhedral cone $C\subset \R^d$ we denote by  $\upsilon_{0}(C),\ldots,\upsilon_d(C)$  its \textit{conic intrinsic volumes}. There are various equivalent definitions of these quantities, see~\cite{ALMT14,AmelunxenLotzDCG17} and~\cite[Section~6.5]{SW08}. For example, we have
$$
\upsilon_j(C) = \sum_{F\in \cF_j(C)}  \alpha(F) \gamma(F, C),
\qquad
j\in \{0,\ldots,d\}.
$$
It is known, see~\cite[Theorem~6.5.5]{SW08} or~\cite[Equation~(5.1)]{ALMT14}, that for every cone $C\subset \R^d$,
\begin{equation}\label{eq:upsilon_j_sum}
\sum_{j=0}^d \upsilon_j(C) =1.
\end{equation}
Also, the \textit{Gauss-Bonnet relation}, see~\cite[Theorem~6.5.5]{SW08} or~\cite[Equation~(5.3)]{ALMT14}, states that
\begin{equation}\label{eq:upsilon_j_gauss_bonnet}
\sum_{j=0}^d (-1)^j \upsilon_j(C) = 0
\end{equation}
for every $d$-dimensional polyhedral cone $C$ that is not a linear subspace.


\subsection{Proof of Proposition~\ref{prop:relations}}\label{subsec:proof_relations}
Consider the $(n-1)$-dimensional random  simplices
$$
P_{n,n-1}^\beta:= [X_1,\ldots,X_{n}] \text{ and }
\tilde P_{n,n-1}^\beta:= [\tilde X_1,\ldots,\tilde X_{n}]
$$
where $X_1,\ldots,X_n$ (respectively, $\tilde X_1,\ldots,\tilde X_n$) are independent random points in $\R^{n-1}$ with probability density $f_{n-1,\beta}$ (respectively, $\tilde f_{n-1,\beta}$).  Let $G$ (respectively, $\tilde G$) be a $k$-vertex face of $P_{n,n-1}^\beta$ (respectively, $\tilde P_{n,n-1}^\beta$). Without loss of generality, we can take $G=[X_1,\ldots,X_k]$ and $\tilde G= [\tilde X_1,\ldots,\tilde X_k]$. The tangent cones of these simplices at this face  are defined as
\begin{align*}
T_{n,k}^\beta
&:=
\{v\in\R^{n-1}: \text{ there exists } \eps>0 \text{ such that }  g_0 + \eps v\in  P_{n,n-1}^\beta\},\\
\tilde T_{n,k}^\beta
&:=
\{v\in\R^{n-1}: \text{ there exists } \eps>0 \text{ such that } \tilde g_0 + \eps v\in \tilde P_{n,n-1}^\beta\},
\end{align*}
where $g_0$ (respectively, $\tilde g_0$) is any point in the relative interior of $G$ (respectively $\tilde G$).
The expected conic intrinsic volumes of the tangent cones $T_{n,k}^\beta$ and $\tilde T_{n,k}^\beta$ were computed in~\cite[Theorems~1.12 and~1.18]{beta_polytopes}. Namely, it was shown there that for all $k\in \{1,\ldots,n-1\}$ and $j\in \{k-1,\ldots,n-1\}$ we have
\begin{align}
\E \upsilon_j(T_{n,k}^\beta)
&=
\frac 1 {\binom {n}{k}} \bI_{n,j+1}(2\beta+n-1) \tilde \bJ_{j+1,k}\left(\beta + \frac{n-1-j}{2}\right),\label{eq:E_intrinsic_tangent_cone}\\
\E \upsilon_j(\tilde T_{n,k}^\beta)
&=
\frac 1 {\binom {n}{k}}\tilde \bI_{n,j+1}(2\beta-n+1) \tilde \bJ_{j+1,k}\left(\beta - \frac{n-1-j}{2}\right).\label{eq:E_intrinsic_tangent_cone_tilde}
\end{align}
For $j\notin \{k-1,\ldots,n-1\}$ we have $\upsilon_j(T_{n,k}^\beta) = \upsilon_j(\tilde T_{n,k}^\beta)=0$, which is due to the fact that the tangent cones contain the $(k-1)$-dimensional linear subspace spanned by $X_1-g_0,\ldots,X_k-g_0$ (respectively, $\tilde X_1-\tilde g_0,\ldots,\tilde X_k-\tilde g_0$).
Applied to the tangent cones $T_{n,k}^\beta$ and $\tilde T_{n,k}^\beta$,  Relations~\eqref{eq:upsilon_j_sum} and~\eqref{eq:upsilon_j_gauss_bonnet} read as
$$
\sum_{j=k-1}^{n-1} \upsilon_j(T_{n,k}^\beta)
=
\sum_{j=k-1}^{n-1} \upsilon_j(\tilde T_{n,k}^\beta)
=
1,
\qquad
\sum_{j=k-1}^{n-1} (-1)^j \upsilon_j(\tilde T_{n,k}^\beta)
=
\sum_{j=k-1}^{n-1} (-1)^j \upsilon_j(\tilde T_{n,k}^\beta)
=
0.
$$
Taking the expectation and applying~\eqref{eq:E_intrinsic_tangent_cone} and~\eqref{eq:E_intrinsic_tangent_cone_tilde}, we arrive at the required relations~\eqref{eq:relation_I_J_1}, \eqref{eq:relation_I_J_2}, \eqref{eq:relation_I_J_1_tilde}, \eqref{eq:relation_I_J_2_tilde}.
\hfill $\Box$

\begin{remark}
It is possible to obtain another proof of Proposition~\ref{prop:relations} using McMullen's non-linear angle-sum relations~\cite{mcmullen,mcmullen_polyhedra}. These state that for every face $F\in \cF_\bullet(P)$ of an arbitrary  polytope $P$,
\begin{align*}
&\sum_{H\in \cF_\bullet(P): F\subset H \subset P} \beta(F,H) \gamma(H,P) = 1,\\
&\sum_{H\in \cF_\bullet(P): F\subset H \subset P} (-1)^{\dim H - \dim P} \beta(F,H) \gamma(H,P) = \delta_{F,P},
\end{align*}
where $\delta_{F,P} = 1$ if $F=P$, and $\delta_{F,P}=0$, otherwise. Applied to $P = P_{n,n-1}^\beta = [X_1,\ldots,X_{n}]$ and $F=[X_1,\ldots,X_k]$, the first relation reads
\begin{equation}\label{eq:mcmullen_for_beta_simpl}
\sum_{m=k}^n \binom{n-k}{m-k}\beta([X_1,\ldots,X_k],[X_1,\ldots,X_m]) \gamma([X_1,\ldots,X_m],[X_1,\ldots,X_n]) = 1.
\end{equation}
To prove Equation~\eqref{eq:relation_I_J_1} of Proposition~\ref{prop:relations}, one is tempted to take the expectation of this relation. This has to be done with care because the relation is non-linear.
First of all, by Theorem~\ref{theo:external} we have
$$
\E \gamma([X_1,\ldots,X_m],[X_1,\ldots,X_n]) = I_{n,m} (2\beta + n-1).
$$
The so-called canonical decomposition of beta distributions, see~\cite{ruben_miles} or~\cite[Theorem~3.3]{beta_polytopes}, implies that the random variables $\gamma([X_1,\ldots,X_m],[X_1,\ldots,X_n])$ and  $\beta([X_1,\ldots,X_k],[X_1,\ldots,X_m])$ are stochastically independent; see~\cite[Theorem~1.6]{beta_polytopes} for the statement and~\cite[Section~4.1]{beta_polytopes} for the proof.  Finally, Theorem~4.1 in~\cite{beta_polytopes} with $d=m-1$, $\ell = n-m$ implies that
$$
\E \beta([X_1,\ldots,X_k],[X_1,\ldots,X_m])
=
J_{m,k} \left(\beta + \frac{n-m}{2}\right).
$$
Observe that on the right-hand side we have a quantity different from $J_{m,k}(\beta)$ since the points $X_1,\ldots,X_m$ are in $\R^{n-1}$ and do not form a full-dimensional simplex, so that we cannot directly apply the definition of $J_{m,k}(\beta)$. Taking the expectation of~\eqref{eq:mcmullen_for_beta_simpl} and using the above facts, we obtain
$$
\sum_{m=k}^n \binom{n-k}{m-k} I_{n,m} (2\beta + n-1) J_{m,k} \left(\beta + \frac{n-m}{2}\right) = 1.
$$
Recalling that $\bI_{n,k}(\alpha) = \binom nk I_{n,k}(\alpha)$ and $\bJ_{n,k}(\beta) = \binom nk J_{n,k}(\beta)$, we arrive at~\eqref{eq:relation_I_J_1}.
The proofs of~\eqref{eq:relation_I_J_2}, \eqref{eq:relation_I_J_1_tilde}, \eqref{eq:relation_I_J_2_tilde} are similar.
\hfill $\Box$
\end{remark}

\subsection{Proof of Theorem~\ref{theo:formula_J_1}}
We use induction over $n$. The claim is true for $n=k=1$ since $\bJ_{1,1}(\beta) = 1$ and $\bI_{1,1}(2\beta) = 1$. Assume that, for some $n\geq 2$, the claim is true for all quantities $\bJ_{m,k}(\gamma)$ with $m\in \{1,\ldots,n-1\}$, $k\in \{1,\ldots,m\}$, $\gamma\geq -1$. In particular, we have
$$
\bJ_{m,k}\left(\beta+\frac {n-m}2\right)
=
\sum_{\ell=0}^{m-k} (-1)^\ell \sum_{m=m_0>m_1>\ldots>m_\ell\geq k} \bI_{m, m_1}(2\beta+n-1) \ldots \bI_{m_{\ell-1}, m_\ell}(2\beta + n-1) \binom {m_\ell}{k}.
$$
By~\eqref{eq:alg_J_1}, we have
$$
\bJ_{n,k}(\beta)
=
\binom {n}{k} - \sum_{m=k}^{n-1}  \bI_{n,m}(2\beta+n-1) \bJ_{m,k}\left(\beta+\frac {n-m}2\right).
$$
Using the induction assumption, we obtain
\begin{align*}
\bJ_{n,k}(\beta)
&=
\binom {n}{k} - \sum_{m=k}^{n-1}  \sum_{\ell=0}^{m-k} (-1)^\ell \sum_{m=m_0>m_1>\ldots>m_\ell\geq k}\\
&\qquad  \bI_{n,m}(2\beta+n-1) \bI_{m, m_1}(2\beta+n-1) \ldots \bI_{m_{\ell-1}, m_\ell}(2\beta + n-1) \binom {m_\ell}{k}\\
&=
\binom {n}{k} - \sum_{\ell'=1}^{n-k} (-1)^{\ell'-1} \sum_{n=n_0>n_1>\ldots>n_{\ell'}\geq k}\\ &\qquad
\bI_{n,n_1}(2\beta+n-1) \bI_{n_1, n_2}(2\beta+n-1) \ldots \bI_{n_{\ell'-1}, n_{\ell'}}(2\beta + n-1) \binom {n_{\ell'}}{k},
\end{align*}
where we used the index shift $\ell' = \ell +1$, $(n_1,\ldots,n_{\ell'}) = (m_0,\ldots, m_\ell)$. Note that $\binom nk$ can be interpreted as the term corresponding to $\ell'=0$. This completes the induction.
\hfill $\Box$

\vspace*{1mm}
Theorem~\ref{theo:formula_J_2} can be established analogously by using Relation~\eqref{eq:alg_J_2} instead of~\eqref{eq:alg_J_1}.

\section{Proofs: Arithmetic properties}
In this section we prove Theorems~\ref{theo:I_arithm} and~\ref{theo:arithm_J}.  The proofs of Theorems~\ref{theo:I_arithm_tilde} and~\ref{theo:arithm_J_tilde}, being analogous to the proofs of Theorems~\ref{theo:I_arithm} and~\ref{theo:arithm_J}, are omitted.

\subsection{Proof of Theorems~\ref{theo:I_arithm} and~\ref{theo:arithm_J}}
Recall from Section~\ref{sec:relations} that we can express $\bJ_{n,k}(\beta)$ through the quantities of the form
$$
\bI_{n,k}(\alpha)
= \binom nk \int_{-\pi/2}^{+\pi/2} c_{1,\frac{\alpha k - 1}{2}} (\cos \varphi)^{\alpha k} \left(\int_{-\pi/2}^\varphi c_{1,\frac{\alpha-1}{2}}(\cos \theta)^{\alpha} \,\dd \theta \right)^{n-k} \, \dd \varphi,
\qquad \alpha\geq 0,
$$
where
$$
c_{1,\beta}= \frac{ \Gamma\left( \frac{3}{2} + \beta\right) }{\sqrt \pi \Gamma(\beta+1)},
\qquad \beta>-1.
$$
In Propositions~\ref{prop:I_rational} and~\ref{prop:I_polynomial_in_pi} we shall establish the arithmetic properties of $\bI_{n,k}(\alpha)$ for integer $\alpha\geq 0$. Taken together, these propositions yield Theorem~\ref{theo:I_arithm}.
\begin{lemma}\label{lem:c_arithm}
Let $\beta > -1$.
\begin{itemize}
\item[(a)] If $\beta$ is integer, then $c_{1,\beta}$ is rational.
\item[(b)] If $\beta$ is half-integer, then $c_{1,\beta}$ is a rational multiple of $\pi^{-1}$.
\end{itemize}
\end{lemma}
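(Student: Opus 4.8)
The plan is to reduce the lemma to the elementary fact that the Gamma function takes values in a controlled arithmetic class at integer and half-integer arguments. Concretely, I would first recall the two standard evaluations that are the only ingredients needed: for a positive integer $m$ one has $\Gamma(m) = (m-1)!$, a positive integer; and for a non-negative integer $m$ one has $\Gamma(m + \tfrac12) = \frac{(2m)!}{4^m\,m!}\,\sqrt{\pi}$, a positive rational multiple of $\sqrt\pi$. Everything then comes down to inspecting which of the two arguments appearing in $c_{1,\beta} = \Gamma(\tfrac32+\beta)/(\sqrt\pi\,\Gamma(\beta+1))$ is an integer and which is a half-integer.

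For part (a), I would suppose $\beta$ is an integer; since $\beta > -1$, it is a non-negative integer. Then $\beta + 1$ is a positive integer, so $\Gamma(\beta+1)$ is a positive integer, while $\tfrac32 + \beta$ is a positive half-integer, so $\Gamma(\tfrac32 + \beta)$ is a positive rational multiple of $\sqrt\pi$. Substituting these into the formula for $c_{1,\beta}$, the factor $\sqrt\pi$ in the numerator cancels against the explicit $\sqrt\pi$ in the denominator, leaving the quotient of a rational number by an integer, which is rational.

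For part (b), I would write $\beta = k - \tfrac12$ with $k$ an integer; the constraint $\beta > -1$ forces $k \geq 0$. Now the roles swap: $\beta + 1 = k + \tfrac12$ is a half-integer, so $\Gamma(\beta+1)$ is a rational multiple of $\sqrt\pi$, whereas $\tfrac32 + \beta = k + 1$ is a positive integer, so $\Gamma(\tfrac32 + \beta)$ is a positive integer. Substituting, the denominator $\sqrt\pi\,\Gamma(\beta+1)$ becomes $\sqrt\pi$ times a rational multiple of $\sqrt\pi$, hence a rational multiple of $\pi$; dividing the integer numerator by this yields a rational multiple of $\pi^{-1}$, as claimed.

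I do not expect any genuine obstacle here: the entire content is the bookkeeping of where the integer argument and the half-integer argument land, and the fact that these two roles interchange between the two cases is exactly what produces the clean $\sqrt\pi$-cancellation in (a) and the surviving $\pi^{-1}$ in (b). The small sanity checks $c_{1,0} = \tfrac12$ and $c_{1,-1/2} = \pi^{-1}$ confirm both conclusions.
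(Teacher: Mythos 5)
Your proposal is correct and follows exactly the paper's argument: the paper's proof likewise reduces the claim to the two facts that $\Gamma(x)$ is an integer for positive integer $x$ and a rational multiple of $\sqrt{\pi}$ for positive half-integer $x$, and then observes the same cancellation. Your version merely spells out the bookkeeping (and the sanity checks $c_{1,0}=\tfrac12$, $c_{1,-1/2}=\pi^{-1}$) in more detail.
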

\begin{proof}
Just recall the following two facts: (i) $\Gamma(x)$ is integer if $x>0$ is integer.  (ii) $\Gamma(x)$ is a rational multiple of $\Gamma(\frac 12) = \sqrt \pi$ if $x>0$ is half-integer.
\end{proof}

\begin{lemma}\label{lem:integral_cos_power}
If $k\geq 1$ is an odd integer, then $\int_{-\pi/2}^\varphi (\cos \theta)^k \dint \theta$ can be represented as a linear combination of the functions $1, \sin \varphi, \sin (3\varphi), \ldots, \sin (k\varphi)$ with rational coefficients.
\end{lemma}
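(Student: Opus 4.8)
The plan is to linearize the integrand and then integrate termwise. First I would expand $(\cos\theta)^k$ via $\cos\theta = (e^{\ii\theta}+e^{-\ii\theta})/2$ and the binomial theorem, giving
\[
(\cos\theta)^k = \frac{1}{2^k}\sum_{j=0}^{k}\binom kj e^{\ii(k-2j)\theta}.
\]
Since $k$ is odd, every exponent $k-2j$ is a nonzero odd integer; pairing the index $j$ with $k-j$ (and using $\binom kj = \binom{k}{k-j}$) collapses the sum into
\[
(\cos\theta)^k = \frac{1}{2^{k-1}}\sum_{\ell=0}^{(k-1)/2}\binom{k}{\tfrac{k-1}{2}-\ell}\cos((2\ell+1)\theta),
\]
a linear combination of $\cos\theta,\cos(3\theta),\ldots,\cos(k\theta)$ with rational (indeed dyadic) coefficients. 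The decisive point is that oddness of $k$ guarantees the absence of a constant term $\cos(0\cdot\theta)=1$ in this expansion; this is precisely what prevents a term proportional to $\varphi$ from appearing after integration.

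Next I would integrate each summand over $[-\pi/2,\varphi]$. For each odd frequency $2\ell+1$,
\[
\int_{-\pi/2}^{\varphi}\cos((2\ell+1)\theta)\,\dint\theta
=
\frac{\sin((2\ell+1)\varphi)}{2\ell+1}
-
\frac{\sin((2\ell+1)(-\pi/2))}{2\ell+1}.
\]
The boundary value satisfies $\sin((2\ell+1)(-\pi/2))=-(-1)^{\ell}\in\{-1,+1\}$, so the lower-limit contribution equals the rational number $(-1)^\ell/(2\ell+1)$. Summing over $\ell$ against the rational coefficients from the linearization, the integral becomes a rational linear combination of $\sin\varphi,\sin(3\varphi),\ldots,\sin(k\varphi)$ together with a rational constant, which is exactly the asserted form, the constant being the coefficient of the function $1$.

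I do not expect any genuine obstacle; the only point requiring care is the one already highlighted, namely that oddness of $k$ is essential. Were $k$ even, the linearization would contain the term $\binom{k}{k/2}2^{-k}$, whose primitive is $\binom{k}{k/2}2^{-k}\,\theta$, producing a contribution proportional to $\varphi$ that lies outside the span of $1,\sin\varphi,\sin(3\varphi),\ldots$; the odd case avoids this entirely.
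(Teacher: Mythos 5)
Your proof is correct and follows essentially the same route as the paper's: expand $(\cos\theta)^k$ via $\cos\theta=(\eee^{\ii\theta}+\eee^{-\ii\theta})/2$ into a rational linear combination of $\cos(m\theta)$ with $m$ odd (the oddness of $k$ excluding the constant frequency), then integrate termwise, with the lower limit contributing the rational constant. Your version merely makes the binomial coefficients and the value $\sin(-(2\ell+1)\pi/2)=-(-1)^\ell$ explicit where the paper leaves them as unspecified rationals $q_m$.
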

\begin{proof}
We have
$$
(\cos \theta)^k = \left(\frac {\eee^{\ii \theta} + \eee^{-\ii \theta}}{2}\right)^k
=
\sum_{m = \pm 1, \pm 3,\ldots} q_m \eee^{\ii m \theta}
=
\sum_{m = 1,3,\ldots} 2q_m \cos (m\theta)
$$
for some rational numbers $q_m$ satisfying $q_m=q_{-m}$ and vanishing for $m>k$. By integration it follows that
$$
\int_{-\pi/2}^\varphi (\cos \theta)^k \dint \theta
=
\sum_{m = 1,3,\ldots} 2q_m \int_{-\pi/2}^\varphi  \cos (m\theta) \dint \theta
=
\sum_{m = 1,3,\ldots} \frac{2q_m}m (\sin (m\varphi)-\sin (-m\pi/2)),
$$
which proves the claim.
\end{proof}

\begin{lemma}\label{lem:integral_cos_power_even}
If $k\geq 0$ is an even integer, then $\int_{-\pi/2}^\varphi (\cos \theta)^k \dint \theta$ can be represented as a linear combination of the functions $\pi, \varphi, \sin (2\varphi), \sin (4\varphi), \ldots, \sin (k\varphi)$ with rational coefficients.
\end{lemma}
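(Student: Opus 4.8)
The plan is to mimic the proof of Lemma~\ref{lem:integral_cos_power}, replacing the odd-frequency expansion by the even-frequency one and carefully tracking the constant (zero-frequency) term that now appears.

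First I would expand $(\cos\theta)^k$ via the complex-exponential representation. Since $k$ is even, writing
$$
(\cos\theta)^k = \left(\frac{\eee^{\ii\theta}+\eee^{-\ii\theta}}{2}\right)^k = \frac{1}{2^k}\sum_{j=0}^{k}\binom{k}{j}\eee^{\ii(k-2j)\theta}
$$
shows that only even frequencies $m=k-2j\in\{-k,\ldots,-2,0,2,\ldots,k\}$ occur. Grouping the $\pm m$ terms and isolating the constant term (which comes from $j=k/2$), I obtain
$$
(\cos\theta)^k = a_0 + \sum_{m=2,4,\ldots,k} a_m\cos(m\theta),
\qquad a_0 = \frac{1}{2^k}\binom{k}{k/2},\quad a_m = \frac{2}{2^k}\binom{k}{(k-m)/2},
$$
where all coefficients $a_m$ are rational.

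Next I would integrate termwise over $[-\pi/2,\varphi]$. The constant term contributes $a_0(\varphi+\tfrac\pi2)$, which supplies precisely the genuine $\varphi$-term together with a rational multiple of $\pi$. For each even $m\geq 2$,
$$
\int_{-\pi/2}^\varphi \cos(m\theta)\,\dint\theta = \frac1m\bigl(\sin(m\varphi)-\sin(-m\pi/2)\bigr) = \frac1m\sin(m\varphi),
$$
where the boundary value vanishes because $\sin(m\pi/2)=0$ for even $m$. Collecting everything yields
$$
\int_{-\pi/2}^\varphi (\cos\theta)^k\,\dint\theta = \frac{a_0\pi}{2} + a_0\varphi + \sum_{m=2,4,\ldots,k}\frac{a_m}{m}\sin(m\varphi),
$$
which is exactly the claimed rational linear combination of $\pi,\varphi,\sin(2\varphi),\ldots,\sin(k\varphi)$.

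There is no genuine obstacle; the argument is entirely routine. The only point worth highlighting is the structural contrast with the odd case: for odd $m$ the boundary values $\sin(m\pi/2)=\pm1$ produced the constant $1$ appearing in Lemma~\ref{lem:integral_cos_power}, whereas for even $m$ these boundary values vanish, and the constant contribution here instead arises solely from integrating the zero-frequency term, which simultaneously generates the $\pi$-term and the honest $\varphi$-term.
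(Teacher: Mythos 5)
Your proof is correct and follows essentially the same route as the paper's: expand $(\cos\theta)^k$ into even complex-exponential frequencies, isolate the zero-frequency term (which upon integration yields the $\varphi$- and $\pi$-contributions), and observe that the boundary terms $\sin(m\pi/2)$ vanish for even $m\geq 2$. The only difference is that you make the rational coefficients explicit via the binomial theorem, which the paper leaves implicit.
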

\begin{proof}
We have
$$
(\cos \theta)^k = \left(\frac {\eee^{\ii \theta} + \eee^{-\ii \theta}}{2}\right)^k
=
\sum_{m = 0, \pm 2, \pm 4,\ldots} q_m \eee^{\ii m \theta}
=
q_0 + \sum_{m = 2,4,\ldots} 2q_m \cos (m\theta)
$$
for some rational numbers $q_m$ satisfying $q_m=q_{-m}$ and vanishing for $m>k$. By integration it follows that
\begin{align*}
\int_{-\pi/2}^\varphi (\cos \theta)^k \dint \theta
&=
q_0 \cdot \left(\varphi + \frac \pi 2\right) + \sum_{m = 2,4,\ldots} 2q_m \int_{-\pi/2}^\varphi  \cos (m\theta) \dint \theta\\
&=
q_0 \cdot \left(\varphi + \frac \pi 2\right) + \sum_{m = 2,4,\ldots} \frac{2q_m}m (\sin (m\varphi)-\sin (-m\pi/2)),
\end{align*}
which proves the claim since $\sin (-m\pi/2) = 0$ for even $m$.
\end{proof}

\begin{proposition}\label{prop:I_rational}
If $\alpha\geq 1$ is an odd integer, then $\bI_{n,k}(\alpha)$ is rational for all $n\in\N$, $k\in \{1,\ldots,n\}$.
\end{proposition}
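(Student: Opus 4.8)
The plan is to turn the defining integral of $\bI_{n,k}(\alpha)$ into the integral of a rational trigonometric polynomial with integer frequencies, and then to use the symmetries of the integrand to decide which terms survive integration over the symmetric interval $[-\pi/2,\pi/2]$. Since $\alpha$ is odd, $\frac{\alpha-1}{2}$ is an integer, so Lemma~\ref{lem:c_arithm}(a) makes $c_{1,\frac{\alpha-1}{2}}$ rational, while Lemma~\ref{lem:integral_cos_power} writes the inner integral as a rational combination of $1,\sin\varphi,\sin(3\varphi),\ldots,\sin(\alpha\varphi)$. Hence I would record that
$$
g(\varphi):=c_{1,\frac{\alpha-1}{2}}\int_{-\pi/2}^\varphi(\cos\theta)^\alpha\,\dd\theta
$$
is a rational trigonometric polynomial carrying only the constant term and odd frequencies, and that $(\cos\varphi)^{\alpha k}$ expands into a rational combination of $\cos(m\varphi)$, $0\le m\le\alpha k$, $m\equiv\alpha k\Mod 2$. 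As products and powers of such polynomials stay of this type, $h(\varphi):=(\cos\varphi)^{\alpha k}g(\varphi)^{n-k}=\sum_j\big(a_j\cos(j\varphi)+b_j\sin(j\varphi)\big)$ has all $a_j,b_j\in\bQ$. Integrating term by term, the sines vanish by oddness and $\int_{-\pi/2}^{\pi/2}\cos(j\varphi)\,\dd\varphi$ equals $\pi$ for $j=0$, vanishes for even $j\ge 2$, and equals $2(-1)^{(j-1)/2}/j\in\bQ$ for odd $j$, so that
$$
\int_{-\pi/2}^{\pi/2}h(\varphi)\,\dd\varphi=a_0\pi+\rho,\qquad a_0,\rho\in\bQ.
$$

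The heart of the argument, and the step I expect to be the main obstacle, is to show that in each parity class exactly the ``wrong'' one of $a_0$ and $\rho$ vanishes. Since only cosine terms contribute, I would work with the even part $h_{\mathrm{even}}(\varphi)=\tfrac12\big(h(\varphi)+h(-\varphi)\big)$, whose cosine coefficients coincide with those of $h$. The plan rests on two symmetries of $g$. The probability normalization built into $\bI$, namely $c_{1,\frac{\alpha-1}{2}}\int_{-\pi/2}^{\pi/2}(\cos\theta)^\alpha\,\dd\theta=1$, combined with the evenness of $(\cos\theta)^\alpha$, gives $g(-\varphi)=1-g(\varphi)$; setting $\varphi=0$ shows the constant term of $g$ equals $\tfrac12$, so the odd-frequency structure from Lemma~\ref{lem:integral_cos_power} yields the second symmetry $g(\varphi+\pi)=1-g(\varphi)$.

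Using the first symmetry I would write
$$
h_{\mathrm{even}}(\varphi)=\tfrac12(\cos\varphi)^{\alpha k}\big(g(\varphi)^{n-k}+(1-g(\varphi))^{n-k}\big),
$$
and then apply $\varphi\mapsto\varphi+\pi$. The bracket is invariant by the second symmetry, while $(\cos(\varphi+\pi))^{\alpha k}=(-1)^{\alpha k}(\cos\varphi)^{\alpha k}$, so $h_{\mathrm{even}}(\varphi+\pi)=(-1)^{\alpha k}h_{\mathrm{even}}(\varphi)$. This single identity closes both cases, because $\alpha$ odd forces $\alpha k\equiv k\Mod 2$. If $k$ is odd, $h_{\mathrm{even}}$ is anti-periodic, hence carries only odd frequencies, so $a_0=0$ and $\int_{-\pi/2}^{\pi/2}h=\rho\in\bQ$; moreover $\tfrac{\alpha k-1}{2}$ is then an integer, so $c_{1,\frac{\alpha k-1}{2}}$ is rational and $\bI_{n,k}(\alpha)=\binom nk c_{1,\frac{\alpha k-1}{2}}\rho\in\bQ$. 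If $k$ is even, $h_{\mathrm{even}}$ is $\pi$-periodic, hence carries only even frequencies, so every odd-frequency cosine coefficient dies and $\rho=0$, leaving $\int_{-\pi/2}^{\pi/2}h=a_0\pi$; here $\tfrac{\alpha k-1}{2}$ is a half-integer, so $c_{1,\frac{\alpha k-1}{2}}$ is a rational multiple of $\pi^{-1}$ by Lemma~\ref{lem:c_arithm}(b), the factor $\pi$ cancels, and $\bI_{n,k}(\alpha)$ is again rational. Thus the whole proof reduces to the parity bookkeeping above: the $(-1)^{\alpha k}$ sign in the $\varphi\mapsto\varphi+\pi$ relation is precisely what kills the spurious $\pi$ when $k$ is odd and the spurious rational remainder when $k$ is even.
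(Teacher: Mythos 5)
Your proof is correct. The skeleton coincides with the paper's: both expand the integrand via Lemma~\ref{lem:c_arithm} and Lemma~\ref{lem:integral_cos_power} into a trigonometric polynomial with rational coefficients and integer frequencies, integrate termwise, and split into cases according to the parity of $k$ (so that $c_{1,\frac{\alpha k-1}{2}}$ is rational or a rational multiple of $\pi^{-1}$). The difference lies entirely in how the crucial cancellation is established. The paper works with complex exponentials and tracks powers of $\ii$: it shows that the dangerous coefficients (the zero-frequency term when $k$ is odd, the odd-frequency terms when $k$ is even) always appear multiplied by $\ii$, and then kills them with the soft observation that $\bI_{n,k}(\alpha)$ is a priori real, so all purely imaginary contributions must cancel. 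You instead make the cancellation constructive: from the probabilistic normalization you derive $g(-\varphi)=1-g(\varphi)$, from the odd-frequency structure of $g$ you derive $g(\varphi+\pi)=1-g(\varphi)$, and the resulting identity $h_{\mathrm{even}}(\varphi+\pi)=(-1)^{\alpha k}h_{\mathrm{even}}(\varphi)$, applied to $h_{\mathrm{even}}=\tfrac12(\cos\varphi)^{\alpha k}\bigl(g^{n-k}+(1-g)^{n-k}\bigr)$, eliminates exactly the wrong coefficient in each parity class. Your route is slightly longer but exhibits the symmetry actually responsible for the vanishing rather than inferring it from the reality of the answer; the paper's is shorter but leaves that structure implicit. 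Both arguments are complete, and all the individual verifications in your write-up (the normalization $c_{1,\frac{\alpha-1}{2}}\int_{-\pi/2}^{\pi/2}(\cos\theta)^{\alpha}\,\dd\theta=1$, the value $g(0)=\tfrac12$, the integrals of $\cos(j\varphi)$ over $[-\pi/2,\pi/2]$, and the parity bookkeeping $\alpha k\equiv k\Mod 2$) check out.
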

\begin{proof}
Note that $c_{1,\frac{\alpha-1}{2}}$ is rational by Lemma~\ref{lem:c_arithm}.  Using Lemma~\ref{lem:integral_cos_power} and the formula $\sin t = (\eee^{\ii t} - \eee^{-\ii t})/(2\ii)$ we can write
$$
\int_{-\pi/2}^\varphi c_{1,\frac{\alpha-1}{2}}(\cos \theta)^{\alpha} \,\dd \theta
= a + \sum_{m=1,3,\ldots} a_m \sin (m\varphi)
= a + \sum_{m=\pm 1, \pm 3,\ldots} a_m' \ii  \eee^{\ii m \varphi},
$$
for some $a,a_m,a_m'\in\bQ$. The sums in the above equality, as well as all sums in this proof, have only finitely many non-zero terms.

\vspace*{2mm}
\noindent
\textsc{Case 1:} Let $k\in \{1,\ldots,n\}$ be odd. Then, $c_{1,\frac{\alpha k - 1}{2}}$ is rational by Lemma~\ref{lem:c_arithm}, and we can write
$$
c_{1,\frac{\alpha k - 1}{2}} (\cos \varphi)^{\alpha k}
=
c_{1,\frac{\alpha k - 1}{2}} \left(\frac{\eee^{\ii \varphi} + \eee^{-\ii \varphi} }{2}\right)^{\alpha k}
=
\sum_{\ell=\pm 1, \pm 3,\ldots} b_\ell \eee^{\ii \ell \varphi}
$$
with some rational numbers $b_\ell$. Taking everything together, we arrive at
$$
\bI_{n,k}(\alpha)
= \binom nk \int_{-\pi/2}^{+\pi/2} \left(\sum_{\ell=\pm 1, \pm 3,\ldots} b_\ell \eee^{\ii \ell \varphi}\right) \left(a + \sum_{m=\pm 1, \pm 3,\ldots} a_m' \ii \eee^{\ii m \varphi}\right)^{n-k} \, \dd \varphi.
$$
When multiplying out the terms under the integral sign, we obtain a finite $\bQ$-linear combination of the terms of the form $\eee^{i s \varphi}$ (with odd $s$) and  $\ii \eee^{is \varphi}$ (with even $s$).   The integral of a term of the former type is a rational number since
$$
\int_{-\pi/2}^{+\pi/2} \eee^{i s \varphi}\dd \varphi = \frac{1}{\ii s} (\eee^{i s \pi/2} - \eee^{-i s \pi/2})\in \bQ,
\qquad s\in \{\pm 1, \pm 3, \ldots\}.
$$
The integrals of the terms of the latter type, with $s\neq 0$,  are also rational since
$$
\int_{-\pi/2}^{+\pi/2} \ii \eee^{i s \varphi}\dd \varphi = \frac{1}{s} (\eee^{i s \pi/2} - \eee^{-i s \pi/2})\in \bQ,
\qquad s\in \{\pm 2, \pm 4, \ldots\}.
$$
Finally, the term $\ii \eee^{\ii 0 \varphi}$ must have coefficient $0$ since its integral is purely imaginary and we know a priori that $\bI_{n,k}(\alpha)$ is real.
Hence, $\bI_{n,k}(\alpha)$ is rational.

\vspace*{2mm}
\noindent
\textsc{Case 2:} Let $k\in \{1,\ldots,n\}$ be even. Then, $\alpha k$ is also even and $c_{1,\frac{\alpha k - 1}{2}}$ is a rational multiple of $1/\pi$ by Lemma~\ref{lem:c_arithm}. We can write
$$
c_{1,\frac{\alpha k - 1}{2}} (\cos \varphi)^{\alpha k}
=
c_{1,\frac{\alpha k - 1}{2}} \left(\frac{\eee^{\ii \varphi} + \eee^{-\ii \varphi} }{2}\right)^{\alpha k}
=
\frac 1 \pi \sum_{\ell= 0, \pm 2, \pm 4,\ldots} b_\ell \eee^{\ii \ell \varphi}
$$
with some rational numbers $b_\ell$, where the sum contains only finitely many non-zero terms.  Taking everything together, we arrive at
$$
\bI_{n,k}(\alpha)
= \binom nk \int_{-\pi/2}^{+\pi/2} \left(\frac 1 \pi \sum_{\ell= 0, \pm 2, \pm 4,\ldots} b_\ell \eee^{\ii \ell \varphi} \right) \left(a + \sum_{m=\pm 1, \pm 3,\ldots} a_m' \ii \eee^{\ii m \varphi}\right)^{n-k} \, \dd \varphi.
$$
When multiplying out the terms under the sign of the integral, we obtain a finite $\bQ$-linear combination of the terms of the form $\pi^{-1} \eee^{i s \varphi}$ (with even $s$) and  $\ii \pi^{-1} \eee^{i s \varphi}$ (with odd $s$). The integral of the term $\pi^{-1}\eee^{i 0 \varphi}$ is $1$.  By the same analysis as in Case~1, the integrals of all terms with $s\neq 0$ are purely imaginary and hence must cancel since we know a priori that $\bI_{n,k}(\alpha)$ is real.
Hence, $\bI_{n,k}(\alpha)$ is rational.
\end{proof}

\begin{proof}[Proof of Theorem~\ref{theo:arithm_J}, Part~(a)]
Let $n\in \N$, $k\in\{1,\ldots,n\}$, and let $\beta\geq -1$ be such that $2\beta + n$ is even. Our aim is to prove that $\bJ_{n,k}(\beta)$ is rational. This is done by induction. The claim is trivial for  $n=1,2,3$.  Assuming that, for some $n\geq 4$, the statement has been established for all $\bJ_{m,k}(\gamma)$ with $m\in \{1,\ldots,n-1\}$, we recall that by~\eqref{eq:alg_J_1},
$$
\bJ_{n,k}(\beta)
=
\binom {n}{k} - \sum_{s=1}^{n-k}  \bI_{n,n-s}(2\beta+n-1) \bJ_{n-s,k}\left(\beta+\frac s2\right).
$$
The numbers $\bI_{n,n-s}(2\beta+n-1)$ are rational by Proposition~\ref{prop:I_rational}, whereas  the terms $\bJ_{n-s,k}(\beta+\frac s2)$ are rational by induction assumption, for all $s\in \{1,\ldots,n-k\}$.
\end{proof}

Next we are going to analyze $\bI_{n,k}(\alpha)$ for even $\alpha\geq 0$. To this end, we need the following
\begin{lemma}\label{lem:term}
Consider the integral $T(s,p) = \frac 1\pi \int_{-\pi/2}^{+\pi/2} \eee^{\ii s \varphi} (\varphi/\pi)^p \dint \varphi$, where $s$ is an even integer, and $p\geq 0$ is integer.
\begin{itemize}
\item[(a)] If $p$ is even, then $T(s,p)$ can be represented as $q_0+ q_2\pi^{-2}+q_4 \pi^{-4}+\ldots+q_{p}\pi^{-p}$ with rational $q_i$'s.
\item[(b)] If $p$ is odd, then $T(s,p)$ can be represented as $\frac \ii \pi (q_0+ q_2\pi^{-2}+q_4 \pi^{-4}+\ldots+q_{p-1}\pi^{-(p-1)})$ with rational $q_i$'s.
    \end{itemize}
\end{lemma}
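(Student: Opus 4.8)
The plan is to prove both parts simultaneously by induction on $p$, reducing $T(s,p)$ to $T(s,p-1)$ via integration by parts. The single arithmetic fact that makes everything work is that, precisely because $s$ is \emph{even}, the boundary values are rational: $\eee^{\ii s\pi/2}=\eee^{-\ii s\pi/2}=(-1)^{s/2}$. First I would dispose of the degenerate case $s=0$ by direct evaluation, since the recursion below divides by $s$. Here $T(0,p)=\pi^{-(p+1)}\int_{-\pi/2}^{\pi/2}\varphi^p\,\dint\varphi$ equals the rational number $\tfrac{1}{(p+1)2^p}$ when $p$ is even and vanishes when $p$ is odd (by symmetry of the integrand), so both claims hold trivially in this case.

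For $s\neq0$ (even) I would integrate by parts with $u=(\varphi/\pi)^p$ and $\dint v=\eee^{\ii s\varphi}\,\dint\varphi$, which yields the recursion
$$
T(s,p)=\frac{(-1)^{s/2}\,\bigl(1-(-1)^p\bigr)}{\ii\,s\,\pi\,2^{p}}+\frac{p\,\ii}{s\,\pi}\,T(s,p-1).
$$
Using $1/\ii=-\ii$, the first (boundary) term is exactly $\tfrac{\ii}{\pi}$ times a rational number; it vanishes when $p$ is even (because $1-(-1)^p=0$) and is a nonzero rational multiple of $\ii/\pi$ when $p$ is odd. The base case is $T(s,0)=\pi^{-1}\int_{-\pi/2}^{\pi/2}\eee^{\ii s\varphi}\,\dint\varphi=0$, once more by $\eee^{\pm\ii s\pi/2}=(-1)^{s/2}$, which is consistent with the form required by (a).

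The inductive step then just tracks how the single factor $\ii$ and the powers of $\pi$ interleave with the parity of $p$. If $p$ is even, the boundary term drops out and $T(s,p)=\tfrac{p\,\ii}{s\,\pi}T(s,p-1)$; applying hypothesis (b) for the odd index $p-1$, the two factors of $\ii$ combine to $-1$ and the extra $\pi^{-1}$ is absorbed, so $\tfrac{\ii}{\pi}\bigl(q_0+q_2\pi^{-2}+\dots+q_{p-2}\pi^{-(p-2)}\bigr)$ becomes a $\bQ$-combination of $\pi^{-2},\dots,\pi^{-p}$, which is of the shape in (a). If $p$ is odd, the boundary term contributes a rational multiple of $\ii/\pi$, while $\tfrac{p\,\ii}{s\,\pi}T(s,p-1)$ applied to the even index $p-1$ via hypothesis (a) gives $\tfrac{\ii}{\pi}$ times a $\bQ$-combination of $1,\pi^{-2},\dots,\pi^{-(p-1)}$; adding the two produces exactly the form claimed in (b).

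The computation is elementary, so I do not expect a genuine obstacle; the only thing requiring care is the bookkeeping of the one imaginary factor against the two parities of $s$ and $p$, and the verification that the exponent range is stated correctly (the $\pi^0$ coefficient $q_0$ is nonzero only in the $s=0$, $p$ even case and vanishes otherwise, while the top power $\pi^{-p}$ genuinely appears). I would finish by noting that, since the lemma only asserts representability in the stated form, allowing some $q_i$ to vanish causes no difficulty.
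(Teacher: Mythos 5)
Your proof is correct and follows essentially the same route as the paper: dispose of $s=0$ and $p=0$ directly, then induct on $p$ via integration by parts, using that $s$ even makes the boundary values $\eee^{\pm\ii s\pi/2}=(-1)^{s/2}$ rational; your recursion agrees with the paper's and the parity bookkeeping is handled correctly. Your write-up is simply a more detailed version of the paper's argument.
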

\begin{proof}
For $s=0$ the statement is trivial since $T(0,p) = 0$ for odd $p$ and $T(0,p) = 2^{-p}/(p+1)$ for even $p$. Let $s\neq 0$ be even.  For $p=0$ we have $T(s,p)=0$. For integer $p\geq 1$ the statement follows by induction using the formula
$$
T(s,p) = \frac 1{\pi  \ii s} \int_{-\pi/2}^{+\pi/2}(\varphi/\pi)^p \dint  \eee^{\ii s \varphi}
=
\left.\left(\frac{(\varphi/\pi)^p}{\pi \ii s}p\eee^{\ii s \varphi}\right) \right|_{\varphi=-\pi/2}^{\varphi=+\pi/2}  +\frac{\ii p}{\pi s} T(s,p-1),
$$
which is obtained by partial integration.
\end{proof}

\begin{proposition}\label{prop:I_polynomial_in_pi}
If  $\alpha\geq 0$ is even, $n\in\N$ and $k\in \{1,\ldots,n\}$, then $\bI_{n,k}(\alpha)$ can be expressed in the form $r_0+r_2\pi^{-2} + r_4\pi^{-4} +\ldots + r_{n-k} \pi^{-(n-k)}$ (if $n-k$ is even) or $r_0+r_2\pi^{-2} + r_4\pi^{-4} +\ldots + r_{n-k-1} \pi^{-(n-k-1)}$ (if $n-k$ is odd), where the $r_i$'s are rational numbers.
\end{proposition}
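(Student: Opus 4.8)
The plan is to follow the strategy of the proof of Proposition~\ref{prop:I_rational}, Case~2: reduce the integrand to a finite $\bQ$-combination of terms of the shape $\eee^{\ii s\varphi}(\varphi/\pi)^{b}$ (times a power of $\pi$) and integrate each one using Lemma~\ref{lem:term}. The new feature, compared with the odd-$\alpha$ case, is that for even $\alpha$ the inner antiderivative contains a genuine linear-in-$\varphi$ term, and it is exactly these linear factors that generate the higher negative powers of $\pi$. First I would record that, since $\alpha$ is even, both $\tfrac{\alpha-1}{2}$ and $\tfrac{\alpha k-1}{2}$ are half-integers, so by Lemma~\ref{lem:c_arithm}(b) the constants $c_{1,\frac{\alpha-1}{2}}$ and $c_{1,\frac{\alpha k-1}{2}}$ are rational multiples of $\pi^{-1}$.

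Next, using Lemma~\ref{lem:integral_cos_power_even} (applicable because $\alpha$ is even) together with the first constant, I would write the inner integral as
\[
F(\varphi):=\int_{-\pi/2}^{\varphi} c_{1,\frac{\alpha-1}{2}}(\cos\theta)^{\alpha}\,\dd\theta
=\rho_0+\rho_1\frac{\varphi}{\pi}+\frac1\pi\sum_{m=2,4,\ldots}\sigma_m\sin(m\varphi),
\]
with rational $\rho_0,\rho_1,\sigma_m$, finitely many nonzero and only even frequencies $m$. Expanding $F(\varphi)^{n-k}$ by the multinomial theorem and labelling the three kinds of factors by $a+b+c=n-k$, each summand carries $\pi^{-c}(\varphi/\pi)^{b}$ from the $b$ linear factors and the $c$ sine factors. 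Writing $\sin(m\varphi)=\tfrac{1}{2\ii}(\eee^{\ii m\varphi}-\eee^{-\ii m\varphi})$ turns the product of $c$ sines into $(-\ii)^{c}2^{-c}$ times a $\bQ$-combination of $\eee^{\ii s\varphi}$ with $s$ even. Multiplying by the outer factor $c_{1,\frac{\alpha k-1}{2}}(\cos\varphi)^{\alpha k}=\pi^{-1}\sum_\ell b_\ell\eee^{\ii\ell\varphi}$ (even $\ell$, rational $b_\ell$, as in Case~2 of the proof of Proposition~\ref{prop:I_rational}), the whole integrand becomes a finite $\bQ$-combination of terms
\[
(-\ii)^{c}\,\pi^{-1-c}\,\eee^{\ii(\ell+s)\varphi}\Bigl(\frac{\varphi}{\pi}\Bigr)^{b},
\qquad \ell+s \text{ even}.
\]

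Integrating term by term and using $\int_{-\pi/2}^{\pi/2}\eee^{\ii(\ell+s)\varphi}(\varphi/\pi)^{b}\,\dd\varphi=\pi\,T(\ell+s,b)$, each term becomes a rational multiple of $(-\ii)^{c}\pi^{-c}T(\ell+s,b)$, with $\ell+s$ even so that Lemma~\ref{lem:term} applies. Here comes the bookkeeping I expect to be the main obstacle, namely tracking parities and powers of $\pi$ simultaneously. By Lemma~\ref{lem:term}, $T(\ell+s,b)$ is real and equal to a $\bQ$-combination of $\pi^{-j}$ with $j$ even, $0\le j\le b$, when $b$ is even, and equal to $\ii\,\pi^{-1}$ times such a combination (with $j\le b-1$) when $b$ is odd; meanwhile $(-\ii)^{c}$ is real for even $c$ and imaginary for odd $c$. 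Hence a term is real precisely when $b+c$ is even, and since $\bI_{n,k}(\alpha)$ is real (being an integral of a real-valued function), the imaginary terms must cancel. In each surviving real term one checks directly that the accumulated exponent of $\pi^{-1}$ is even and bounded by $b+c\le n-k$ (in the even–even case the exponent is $c+j$, in the odd–odd case it is $c+1+j$, both even and at most $c+b$). This yields exactly the claimed form $r_0+r_2\pi^{-2}+\cdots$, terminating at $\pi^{-(n-k)}$ when $n-k$ is even and at $\pi^{-(n-k-1)}$ when $n-k$ is odd, with rational coefficients.
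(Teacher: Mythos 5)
Your proposal is correct and follows essentially the same route as the paper's proof: both reduce the integrand to a finite $\bQ$-combination of terms $\pi^{-c}\ii^{\pm c}\,\eee^{\ii s\varphi}(\varphi/\pi)^{b}$ with $s$ even, invoke Lemma~\ref{lem:term}, and use the a priori realness of $\bI_{n,k}(\alpha)$ to discard the imaginary terms, with the same parity bookkeeping showing the surviving powers of $\pi^{-1}$ are even and bounded by $n-k$. The only difference is notational (your explicit multinomial split $a+b+c=n-k$ versus the paper's direct collection of exponential terms).
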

\begin{proof}
Note that $c_{1,\frac{\alpha-1}{2}}$ is a rational multiple of $1/\pi$ by Lemma~\ref{lem:c_arithm}.  Using Lemma~\ref{lem:integral_cos_power_even} and the formula $\sin t = (\eee^{\ii t} - \eee^{-\ii t})/(2\ii)$ we can write
$$
\int_{-\pi/2}^\varphi c_{1,\frac{\alpha-1}{2}}(\cos \theta)^{\alpha} \,\dd \theta
= a' + \frac{\varphi}{\pi} a'' +  \sum_{m=2,4,\ldots} \frac{a_m'''}{\pi} \sin (m\varphi)
= a' + \frac{\varphi}{\pi} a'' + \sum_{m=\pm 2, \pm 4,\ldots} \frac{a_m}{\pi} \ii  \eee^{\ii m \varphi},
$$
for some $a',a'',a_m''',a_m\in\bQ$.   Recall that $\alpha k$ is even and hence $c_{1,\frac{\alpha k - 1}{2}}$ is a rational multiple of $1/\pi$ by Lemma~\ref{lem:c_arithm}. Thus, we can write
$$
c_{1,\frac{\alpha k - 1}{2}} (\cos \varphi)^{\alpha k}
=
c_{1,\frac{\alpha k - 1}{2}} \left(\frac{\eee^{\ii \varphi} + \eee^{-\ii \varphi} }{2}\right)^{\alpha k}
=
\frac 1 \pi \sum_{\ell= 0, \pm 2, \pm 4,\ldots} b_\ell \eee^{\ii \ell \varphi}
$$
with some rational numbers $b_\ell$, where we recall the convention that the sums contain only finitely many non-zero terms.  Taking everything together, we arrive at
$$
\bI_{n,k}(\alpha)
=
\binom nk \int_{-\pi/2}^{+\pi/2} \left(\frac 1 \pi \sum_{\ell= 0, \pm 2, \pm 4,\ldots} b_\ell \eee^{\ii \ell \varphi} \right) \left( a' + \frac{\varphi}{\pi} a'' + \sum_{m=\pm 2, \pm 4,\ldots} \frac{a_m}{\pi} \ii  \eee^{\ii m \varphi}\right)^{n-k} \, \dd \varphi.
$$
When multiplying everything out, we obtain a representation of $\bI_{n,k}(\alpha)$ as a finite $\bQ$-linear combination of the terms of the form
$$
\frac 1 \pi \left(\frac{\ii}{\pi}\right)^b  \int_{-\pi/2}^{+\pi/2} \eee^{\ii s \varphi} \left(\frac{\varphi}{\pi}\right)^p \dint \varphi = \left(\frac{\ii}{\pi}\right)^b T(s,p),
$$
where $s$ is even, $p\geq 0$ and $b\geq 0$ are integers with $p+b\in \{0,\ldots, n-k\}$.
If both $p$ and $b$ are even, then by Lemma~\ref{lem:term} (a) the term is a $\bQ$-linear combination of $1,\pi^{-2},\pi^{-4}, \ldots, \pi^{-(p+b)}$.
If both $p$ and $b$ are odd, then by Lemma~\ref{lem:term} (b) the term is a $\bQ$-linear combination of $1,\pi^{-2},\pi^{-4}, \ldots, \pi^{-(p+b)}$.
If the parities of $p$ and $b$ differ, then the term is purely imaginary and can be ignored since we a priori know that $\bI_{n,k}(\alpha)$ is real, which implies that all such terms must cancel.
\end{proof}

\begin{proof}[Proof of Theorem~\ref{theo:arithm_J}, Part~(b)]
Let $n\in\N$, $k\in \{1,\ldots,n\}$, and $\beta\geq -1$ be such that $2\beta + n$ is odd. We prove by induction that $\bJ_{n,k}(\beta)$ can be expressed as  $q_0 + q_2 \pi^{-2} + q_4 \pi^{-4} + \ldots + q_{n-k} \pi^{-(n-k)}$ (if $n-k$ is even) or $q_0 + q_2 \pi^{-2} + q_4 \pi^{-4} + \ldots + q_{n-k} \pi^{-(n-k-1)}$ (if $n-k$ is odd), where the $q_{i}$'s are rational. The statement is trivial for $n=1,2,3$. Assume that, for some $n\geq 4$, the statement has been established for $\bJ_{m,k}(\gamma)$ with $m\in \{1,\ldots,n-1\}$.
Recall from~\eqref{eq:alg_J_2} that
$$
\bJ_{n,k}(\beta)
=
\frac 12 \binom {n}{k} - \sum_{s=1}^{\lfloor \frac{n-k}{2}\rfloor}  \bI_{n,n-2s}(2\beta+n-1) \bJ_{n-2s,k}(\beta+s).
$$
By Proposition~\ref{prop:I_polynomial_in_pi}, $\bI_{n,n-2s}(2\beta+n-1)$ can be expressed in the form $r_0+r_2\pi^{-2} + r_4\pi^{-4} +\ldots + r_{2s} \pi^{-2s}$ with rational $r_i$'s. On the other hand, by the induction assumption, we can write $\bJ_{n-2s,k}(\beta+s)$ in the form $q_0' + q_2' \pi^{-2} + q_4' \pi^{-4} + \ldots + q_{n-2s-k}' \pi^{-(n-2s-k)}$ (if $n-k$ is even) or $q_0' + q_2' \pi^{-2} + q_4' \pi^{-4} + \ldots + q_{n-2s-k}' \pi^{-(n-2s-k-1)}$ (if $n-k$ is odd) with rational $q_i'$'s. Multiplying everything out, we obtain the required statement.
\end{proof}




\section*{Acknowledgement}
Supported by the German Research Foundation under Germany's Excellence Strategy  EXC 2044 -- 390685587, Mathematics M\"unster: Dynamics - Geometry - Structure. The author is grateful to P.~Calka and C.~Th\"ale for pointing out references~\cite{moller,moller_book,hug_affine}, and to D.~Hug for helping to correct an error in Section~\ref{subsec:rand_poly_appr_conv_body}.


\bibliography{angles_bib}
\bibliographystyle{plainnat}

\end{document}